
\documentclass[12pt]{amsart}
\usepackage{amsmath, amsthm, amsfonts, amssymb, color}
\usepackage{mathrsfs}
\usepackage{amsfonts, amsmath}
\usepackage{bbm}

\setlength{\oddsidemargin}{  0.1cm}
\setlength{\evensidemargin}{  0.1cm}
\setlength{\topmargin}{-0.2cm}
\textheight 23.5cm
\textwidth 16.8cm

\baselineskip 16.5pt
\hfuzz=6pt





\theoremstyle{plain}
\newtheorem{theorem}{Theorem}[section]
\newtheorem{prop}[theorem]{Proposition}
\newtheorem{lemma}[theorem]{Lemma}

\theoremstyle{definition}
\newtheorem{definition}[theorem]{Definition}
\newtheorem{remark}[theorem]{Remark}

\DeclareMathOperator{\supp}{supp}

\allowdisplaybreaks
\setcounter{tocdepth}{1}

\def\RR{\mathbb R}

\def\11{1\!\!1}
\def\supp{\text{supp}}

\newcommand{\bmo}{{\rm BMO}}

\newcommand{\XX}{X}

\medskip

\title[Product Hardy spaces]{Product Hardy spaces associated to operators
with heat kernel bounds on spaces of homogeneous type}

\author{Peng Chen,\ Xuan Thinh Duong,\ Ji Li,\ Lesley A.~Ward\ and\ Lixin Yan}

\address{
Peng Chen, Department of Mathematics, Sun Yat-sen University, Guangzhou, 510275, China, and
School of Information Technology and Mathematical Sciences, University of South Australia,
Mawson Lakes SA 5095, Australia}
\email{chenpeng3@mail.sysu.edu.cn}
\address{
Xuan Thinh Duong, Department of Mathematics, Macquarie University, NSW 2109, Australia}
\email{
xuan.duong@mq.edu.au}
\address{
Ji Li, Department of Mathematics, Macquarie University, NSW 2109, Australia}
\email{
ji.li@mq.edu.au}
\address{
Lesley A.~Ward,
School of Information Technology and Mathematical
Sciences, University of South Australia, Mawson Lakes SA 5095,
Australia}
\email{
lesley.ward@unisa.edu.au}
\address{
Lixin Yan, Department of Mathematics, Sun Yat-sen University, Guangzhou, 510275, China}
\email{
mcsylx@mail.sysu.edu.cn
}

\subjclass[2010]{42B20, 42B25, 46B70, 47G30.}
\keywords{Singular integrals, Hardy spaces, product space,
atomic decomposition, Calder\'on--Zygmund decomposition.}

\date{}

\begin{document}

\maketitle

\medskip

\begin{abstract}
    The aim of this article is to develop the theory of product
    Hardy spaces associated with operators which possess the
    weak assumption of Davies--Gaffney heat kernel estimates,
    in the setting of spaces of homogeneous type. We also
    establish a Calder\'on--Zygmund decomposition on product
    spaces, which is of independent interest, and use it to
    study the interpolation of these product Hardy spaces. We
    then show that under the assumption of generalized Gaussian
    estimates, the product Hardy spaces coincide with the
    Lebesgue spaces, for an appropriate range of~$p$.
\end{abstract}

\maketitle

\tableofcontents

\section{Introduction }
\setcounter{equation}{0}

Modern harmonic analysis was introduced in the '50s, with the
Calder\'on--Zygmund theory at the heart of it. This theory
established criteria for singular integral operators to be
bounded on different scales of function spaces, especially the
Lebesgue spaces $L^p$, $1 < p < \infty$. To achieve this goal,
an integrated part of the Calder\'on--Zygmund theory includes
the theory of interpolation and the theory of function spaces,
in particular end-point spaces such as the Hardy and BMO
spaces. The development of the theory of Hardy spaces in
$\mathbb{R}^n$ was initiated by E.M.~Stein and G.~Weiss
\cite{SW}, and was originally tied to the theory of harmonic
functions. Real-variable methods were introduced into this
subject by C.~Fefferman and E.M.~Stein in~\cite{FS}; the
evolution of their ideas led eventually to characterizations of
Hardy spaces via the atomic or molecular
decomposition. See for instance \cite{C}, 
\cite{St} and~\cite{TW}. The advent of the atomic and molecular
characterizations enabled the extension of the Hardy spaces on
Euclidean spaces to the more general setting of spaces of
homogeneous type~\cite{CW}.


While the Calder\'on--Zygmund theory with one parameter was
well established in the four decades of the '50s to '80s,
multiparameter Fourier analysis was introduced later in the
'70s and studied extensively in the '80s by a number of well
known mathematicians, including R.~Fefferman, S.-Y. A. Chang,
R. Gundy, E.M. Stein, and J.L. Journ\'e (see for instance \cite
{CF1}, \cite{CF2}, \cite{CF3}, \cite{F1}, \cite{F2}, \cite{F3},
\cite{F4}, \cite{FSt}, \cite{GS}, \cite{Jo}).

It is now understood that there are important situations in
which the standard theory of Hardy spaces is not applicable and
there is a need to consider Hardy spaces that are adapted to
certain linear operators, similarly to the way that the
standard Hardy spaces are adapted to the Laplacian. In this new
development, the real-variable techniques of \cite{CW},
\cite{FS} and \cite{CMS} are still of fundamental importance.

Recently, a theory of Hardy spaces associated to operators was
introduced and developed by many authors. The following are
some previous closely related results in the one-parameter
setting.


(i) In \cite{AuDM}, P. Auscher, X.T. Duong and A. M$^{\rm c}$Intosh
introduced  the Hardy space $H_L^1(\mathbb{R}^n)$ associated to an
operator $L$, and obtained a molecular decomposition, assuming that
$L$ has a bounded holomorphic functional calculus on
$L^2(\mathbb{R}^n)$ and the kernel of the heat semigroup $e^{-tL}$
has a pointwise Poisson upper bound.

(ii) Under the same assumptions on $L$ as in (i), X.T. Duong
and L.X. Yan introduced the space $\bmo_L(\mathbb{R}^n)$
adapted to $L$ and established the duality of
$H_L^1(\mathbb{R}^n)$ and $\bmo_{L^*}(\mathbb{R}^n)$ in
\cite{DY1}, \cite{DY2}, where $L^*$ denotes the adjoint
operator of $L$ on $L^2(\mathbb{R}^n)$. L.X. Yan \cite{Yan}
also studied the Hardy space $H_L^p(\mathbb{R}^n)$ and duality
associated to an operator $L$ under the same assumptions as
(ii) for  all $0<p<1$.

(iii)  P. Auscher,  A. M$^{\rm c}$Intosh and E.
Russ~\cite{AMR}, and S. Hofmann and S. Mayboroda~\cite{HM},
treated Hardy spaces $H^p_L, p\geq 1$, (and in the latter
paper, also BMO spaces) adapted, respectively, to the Hodge
Laplacian on a Riemannian manifold with a doubling measure, or
to a second order divergence form elliptic operator on
$\mathbb{R}^n$ with complex coefficients, in which settings
pointwise heat kernel bounds may fail.


(iv) S. Hofmann, G. Lu, D. Mitrea, M. Mitrea and L.X.
Yan~\cite{HLMMY} developed the theory of $H^1_L(X)$ and ${\rm
BMO}_L(X)$ spaces adapted to a non-negative, self-adjoint
operator $L$ whose heat semigroup satisfies the weak
Davies--Gaffney bounds, in the setting of a space of
homogeneous type $X$.


(v) P.C. Kunstmann and M.~Uhl~\cite{KU1, U} studied the Hardy
spaces $H^p_L(X)$, $1<p<\infty$, associated to operators~$L$
satisfying the same conditions as in~(iv) as well as the
generalized Gaussian estimates for $p_0\in [1,2)$, and proved
that $H^p_L(X)$ coincides with $L^p(X)$ for $p_0 < p < p'_0$
where $p'_0$ is the conjugate index of $p_0$.

(vi) X.T. Duong and J.~Li~\cite{DL} considered the Hardy spaces
$H_L^p(X)$, $0 < p \leq 1$, associated to non-self-adjoint
operators $L$ that generate an analytic semigroup on $L^2(X)$
satisfying Davies--Gaffney estimates and having a bounded
holomorphic functional calculus on $L^2(X)$.

In contrast to the above listed established one-parameter
theory, the multiparameter theory is much more complicated and
is less advanced. In particular, there has not been much
progress in the last decade in the direction of the
paper~\cite{DY2} on singular integral operators with non-smooth
kernels and the related product function spaces.

In \cite{DSTY}, D.G. Deng, L. Song, C.Q. Tan and L.X. Yan
introduced the product Hardy space $H^1_L(\mathbb{R}\times
\mathbb{R})$ associated with an operator~$L$, assuming that $L$
has a bounded holomorphic functional calculus on
$L^2(\mathbb{R})$ and the kernel of the heat semigroup
$e^{-tL}$ has a pointwise Poisson upper bound.

Recently, X.T. Duong, J. Li and L.X. Yan \cite{DLY} defined the
product Hardy space $H^1_{L_1,L_2}(\mathbb{R}^{n_1}\times
\mathbb{R}^{n_2})$ associated with non-negative self-adjoint
operators $L_1$ and $L_2$ satisfying Gaussian heat kernel
bounds, and then obtained the atomic decomposition, as well as
the $H^1_{L_1,L_2}(\mathbb{R}^{n_1}\times
\mathbb{R}^{n_2})\rightarrow L^1(\mathbb{R}^{n_1}\times
\mathbb{R}^{n_2})$  boundedness of product singular integrals
with non-smooth kernels.

In the study of Hardy spaces $H^p$ associated to operators,
$1\leq p<\infty$, the assumptions on these operators determine
the relevant properties of the corresponding Hardy spaces. One
would start with the definition of Hardy spaces associated to
operators under ``weak" conditions on the operators so that the
definition is applicable to a large class of operators.
However, to obtain useful properties such as the coincidence
between the Hardy spaces $H^p$ and the Lebesgue spaces $L^p$,
one would expect stronger conditions on the operators are
needed. A natural question is to find a weak condition that is
still sufficient for the Hardy spaces and Lebesgue spaces to
coincide. We do so in part~($\gamma$) below.

This article is devoted to the study of Hardy spaces associated
to operators, in the setting of product spaces of homogeneous
type. Assume that $L_1$ and $L_2$ are two non-negative
self-adjoint operators acting on $L^2(X_1)$ and $L^2(X_2)$,
respectively, where $X_1$ and $X_2$ are spaces of homogeneous
type, satisfying Davies--Gaffney
estimates~\eqref{DaviesGaffney} (see
Section~\ref{sec:GGE_DG_FPS}{\bf(c)}). We note that the
Davies--Gaffney estimates are a rather weak assumption, as they
are known to be satisfied by quite a large class of operators
(see Section~\ref{sec:GGE_DG_FPS} below).

Our main results are the following. In this paper we work in
the biparameter setting. However our results, methods and
techniques extend to the full $k$-parameter setting.

($\alpha$) We define the product Hardy space
$H^1_{L_1,L_2}(X_1\times X_2)$ associated with $L_1$ and $L_2$,
in terms of the area function, and then obtain the
corresponding atomic decomposition (Theorem~\ref{theorem of
Hardy space atomic decom}). This is the generalisation of the
results in \cite{DLY} from the product of Euclidean spaces
under the stronger assumption of Gaussian
estimates~\eqref{Gaussian} (see
Section~\ref{sec:GGE_DG_FPS}{\bf(a)}) to the product of spaces
of homogeneous type with the weaker assumption of
Davies--Gaffney estimates~\eqref{DaviesGaffney}. This is also
the extension of~\cite{HLMMY} from the one-parameter setting to
the multiparameter setting. This part is the content of
Section~\ref{sec:atomic_decomposition}.

($\beta$) We define the product Hardy space
$H^p_{L_1,L_2}(X_1\times X_2)$ for $1 < p < \infty$ associated
with $L_1$ and $L_2$, and prove the interpolation result that
if an operator $T$ is bounded on $L^2(X_1\times X_2)$ and is
also bounded from $H_{L_1,L_2}^1(X_1\times X_2)$ to
$L^1(X_1\times X_2)$, then it is bounded from
$H_{L_1,L_2}^p(X_1\times X_2)$ to $L^p(X_1\times X_2)$ for all
$p$ with $1\leq p\leq 2$ (Theorem~\ref{theorem interpolation
Hp}). The proof of this interpolation result relies on the
Calderon--Zygmund decomposition in the product setting,
obtained in Theorem~\ref{theorem C-Z decomposition for Hp}
below, which generalizes the classical result of Chang and
Fefferman~\cite{CF2} on $H^1(\mathbb{R}\times \mathbb{R})$.
This is done in
Section~\ref{sec:CZ_decomposition_interpolation}.

($\gamma$) Next we assume that $L_1$ and $L_2$ satisfy
generalized Gaussian estimates (see
Section~\ref{sec:GGE_DG_FPS}{\bf (b)}) for some $p_0 \in
[1,2)$. This assumption implies that $L_1$ and $L_2$ are
injective operators (see Theorem \ref{theorem injective}) and
satisfy the Davies--Gaffney estimates. We prove that our
product Hardy spaces $H_{L_1,L_2}^p(X_1\times X_2)$ coincide
with $L^p(X_1\times X_2)$ for $p_0 < p < p'_0$, where $p'_0$ is
the conjugate index of $p_0$ (Theorem \ref{theorem-Hp-Lp}).
This is the extension to the multiparameter setting of the
one-parameter result in~\cite{U}, and is carried out in
Section~\ref{sec:HpandLp}.

Along this line of research,  in~\cite{CDLWY} we study the
boundedness of multivariable spectral multipliers on product
Hardy spaces on spaces of homogeneous type.

\smallskip

In the following section we introduce our assumptions on the
underlying spaces $X_1$ and $X_2$ and the operators $L_1$ and
$L_2$, give some examples of such operators, and then state our
main results. Throughout this article, the symbols ``$c$" and
``$C$" denote constants that are independent of the essential
variables.

\section{Assumptions, and statements of main results}
\label{sec:assumptions_main_results}
\setcounter{equation}{0}

This section contains background material on spaces of
homogeneous type, dyadic cubes, heat kernel bounds, and finite
propagation speed of solutions to the wave equation, as well as
the statements of our main results.


\newpage

\subsection{Spaces of homogeneous type}

\begin{definition}\label{def:space_of_homog_type}
Consider a set $X$ equipped with a quasi-metric~$d$ and a
measure~$\mu$.
\begin{enumerate}
    \item[(a)] A \emph{quasi-metric}~$d$ on a set~$\XX$ is
        a function
        $d:\XX\times\XX\longrightarrow[0,\infty)$
        satisfying (i) $d(x,y) = d(y,x) \geq 0$ for all
        $x$, $y\in\XX$; (ii) $d(x,y) = 0$ if and only if $x
        = y$; and (iii) the \emph{quasi-triangle
        inequality}: there is a constant $A_0\in
        [1,\infty)$ such that for all $x$, $y$, $z\in\XX$, 
        \begin{eqnarray*}\label{eqn:quasitriangleineq}
            d(x,y)
            \leq A_0 [d(x,z) + d(z,y)].
        \end{eqnarray*}

        We define the quasi-metric ball by $B(x,r) :=
        \{y\in X: d(x,y) < r\}$ for $x\in X$ and $r > 0$.
        Note that the quasi-metric, in contrast to a
        metric, may not be H\"older regular and
        quasi-metric balls may not be open.

        \item[(b)] We say that a nonzero measure $\mu$
            satisfies the \emph{doubling condition} if
            there is a constant $C$ such that for all
            $x\in\XX$ and $r > 0$,
            \begin{eqnarray}\label{doubling condition}
               \mu(B(x,2r))
               \leq C \mu(B(x,r))
               < \infty.
            \end{eqnarray}

        \item[(c)] We point out that the doubling condition
            (\ref{doubling condition}) implies that there
            exist positive constants $n$ and $C$ such that
            for all $x\in X$, $\lambda\geq 1$ and $r > 0$,
            \begin{eqnarray}\label{upper dimension}
                \mu(B(x, \lambda r))
                \leq C\lambda^{n} \mu(B(x,r)).
            \end{eqnarray}
            Fix such a constant $n$; we refer to this $n$
            as \emph{the upper dimension of $\mu$}.

        \item[(d)] We say that $(\XX,d,\mu)$ is a {\it
            space of homogeneous type} in the sense of
            Coifman and Weiss if $d$ is a quasi-metric
            on~$\XX$ and $\mu$ is a nonzero measure on~$X$
            satisfying the doubling condition.
        \end{enumerate}
\end{definition}
Throughout the whole paper, we assume that $\mu(X) = +\infty$.
\medskip

It is shown in~\cite{CW} that every space of homogeneous type
$X$  is \emph{geometrically doubling}, meaning that there is
some fixed number~$T$ such that each ball~$B$ in~$X$ can be
covered by at most $T$ balls of half the radius of~$B$.

\medskip

We recall the following construction given by M.~Christ in
\cite{Chr}, which provides an analogue on spaces of homogeneous
type of the grid of Euclidean dyadic cubes. The following
formulation is taken from~\cite{Chr}.


\begin{lemma}[\cite{Chr}] \label{lemma-dyadic-cubes}
    Let $(X,d,\mu)$ be a space of homogeneous type. Then
    there exist a collection $\{I_\alpha^k \subset X: k\in
    \mathbb{Z}, \alpha \in \mathcal{I}_k\}$ of open subsets
    of~$X$, where $\mathcal{I}_k$ is some index set, and
    constants $C_3 < \infty$, $C_4 > 0$, such that
    \begin{enumerate}
    \item[(i)] $\mu(X\setminus \bigcup_\alpha I_\alpha^k) =
        0$ for each fixed $k$, and $I_\alpha^k\cap
        I_\beta^k = \emptyset$ if $\alpha\neq\beta$;

    \item[(ii)] for all $\alpha,\beta,k,l$ with $l\geq k$,
        either $I_\beta^l\subset I_\alpha^k$ or
        $I_\beta^l\cap I_\alpha^k = \emptyset$;

    \item[(iii)] for each $(k,\alpha)$ and each $l<k$ there
        is a unique $\beta$ such that $ I_\alpha^k \subset
        I_\beta^l $;

    \item[(iv)] ${\rm diam}(I_\alpha^k)\leq C_3 2^{-k} $;
        and

    \item[(v)] each $I_\alpha^k$ contains some ball
        $B(z_\alpha^k, C_4 2^{-k} )$, where $z_\alpha^k \in
        X$.
    \end{enumerate}
\end{lemma}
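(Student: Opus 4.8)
The plan is to reproduce Christ's construction, whose three ingredients are: a system of maximally separated \emph{centers} at each dyadic scale, a tree (parent) structure linking centers across consecutive scales, and a small-boundary estimate guaranteeing that the cube boundaries are $\mu$-null. First I would fix the scales $2^{-k}$ and, for each $k\in\mathbb{Z}$, use a greedy (Zorn) argument to select a maximal $2^{-k}$-separated subset $\{z_\alpha^k:\alpha\in\mathcal{I}_k\}$ of $X$, so that $d(z_\alpha^k,z_\beta^k)\ge 2^{-k}$ whenever $\alpha\ne\beta$. Since a space of homogeneous type is geometrically doubling and hence separable, each $\mathcal{I}_k$ may be taken countable. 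Maximality forces the balls $\{B(z_\alpha^k,2^{-k})\}_\alpha$ to cover $X$, while separation together with the doubling condition~\eqref{doubling condition} bounds the overlap of the dilated balls $B(z_\alpha^k,c\,2^{-k})$ by a fixed constant. These two facts already encode the eventual conclusions (iv) and (v): the centers $z_\alpha^k$ will serve as the points in (v), and each cube will be trapped inside a ball of radius comparable to $2^{-k}$.

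Next I would impose the nesting. For each $k$ and each $\beta\in\mathcal{I}_{k+1}$ I assign a parent $(k,\alpha)=:\pi(k+1,\beta)$ by choosing $\alpha$ so that $z_\alpha^k$ is a scale-$k$ center closest to $z_\beta^{k+1}$, breaking ties by a fixed well-ordering of $\bigsqcup_k\mathcal{I}_k$; the quasi-triangle inequality then shows the chosen parent satisfies $d(z_\alpha^k,z_\beta^{k+1})\le C\,2^{-k}$. Iterating $\pi$ produces a partial order $\preceq$ on $\bigsqcup_k\{k\}\times\mathcal{I}_k$, hence a tree, and summing a geometric series along any descending chain shows that the centers of all descendants of $(k,\alpha)$ lie in a fixed ball $B(z_\alpha^k,C\,2^{-k})$. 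I would then define the cube $Q_\alpha^k$ to be the measurable hull of the union of the centers of all descendants of $(k,\alpha)$; equivalently, one assigns $\mu$-a.e.\ $x$ to the scale-$k$ cube indexed by the scale-$k$ ancestor of the fine-scale centers converging to $x$. By construction the order $\preceq$ makes the nesting properties (ii) and (iii) automatic, the descendant-ball bound yields the diameter estimate (iv), and disjointness at a fixed scale follows because the tree splits into one component per scale-$k$ index.

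The main obstacle, and the only genuinely analytic step, is property (i): that for fixed $k$ the cubes cover $X$ up to a null set, equivalently that the cube boundaries are $\mu$-null. The difficulty is that points lying near the ``fault lines'' between neighbouring centers could be reassigned at every finer scale, and one must prevent these ambiguities from accumulating to positive measure. I would handle this exactly as in Christ's argument, by establishing a \emph{small-boundary inequality} of the form
\[
\mu\bigl(\{x\in Q_\alpha^k:\ d(x,X\setminus Q_\alpha^k)\le t\,2^{-k}\}\bigr)\le C\,t^{\eta}\,\mu(Q_\alpha^k),\qquad 0<t\le 1,
\]
for some $\eta>0$. This estimate is proved inductively across scales: geometric doubling controls the number of scale-$(k{+}1)$ children adjacent to the boundary of a scale-$k$ cube, while the uniform lower bound for $\mu(B(z_\beta^{l},c\,2^{-l}))$ coming from~\eqref{doubling condition} forces the boundary layer to contribute only a geometrically decaying fraction of the mass. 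Letting $t\to0$ gives $\mu(\partial Q_\alpha^k)=0$, and combined with the covering property of the nets this yields $\mu\bigl(X\setminus\bigcup_\alpha I_\alpha^k\bigr)=0$; taking $I_\alpha^k$ to be the interior of $Q_\alpha^k$ makes the sets open while preserving (i)--(v). Finally, a standard reduction, replacing $2^{-k}$ by $\delta^k$ for a fixed $\delta$ small relative to $A_0$ and then relabelling, absorbs the quasi-metric constant $A_0$ into $C_3$ and $C_4$ and completes the proof.
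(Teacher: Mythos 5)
The paper does not prove this lemma at all: it is quoted verbatim from Christ's paper \cite{Chr}, and your proposal is precisely Christ's original construction --- maximal separated nets at each scale, a nearest-center parent tree, cubes built from the descendants of each net point, and the inductive small-boundary estimate that delivers the almost-everywhere covering in (i). So your approach coincides with the one the paper relies on, and it correctly isolates both the genuinely analytic step (the small-boundary inequality) and the need to run the construction with base $\delta$ small relative to the quasi-metric constant $A_0$ before relabelling to the scales $2^{-k}$.
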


The point $z^k_\alpha$ is called the $\emph{centre}$ of the
set~$I^k_\alpha$. Informally, we can think of $I_\alpha^k$ as a
dyadic cube with diameter roughly $2^{-k}$, centered
at~$z_\alpha^k$. We write $\ell(I^k_\alpha) := C_3 2^{-k}$.

Given a constant $\lambda > 0$, we define $\lambda I_\alpha^k$
to be the ball
\[
    \lambda I_\alpha^k
    := B(z_\alpha^k,\lambda C_3 2^{-k});
\]
if $\lambda > 1$ then $I^k_\alpha\subset \lambda I_\alpha^k$.
We refer to the ball $\lambda I_\alpha^k$ as the \emph{cube
with the same center as $I_\alpha^k$ and diameter $\lambda {\rm
diam}(I_\alpha^k)$}, or as the \emph{$\lambda$-fold dilate} of
the cube~$I_\alpha^k$. Since $\mu$ is doubling, we have
$\mu(\lambda I^k_\alpha) \leq C \mu(B(z^k_\alpha,C_4 2^{-k}))
\leq C \mu(I^k_\alpha)$.

\subsection{Generalized Gaussian estimates, Davies--Gaffney estimates,
and finite propagation speed}
\label{sec:GGE_DG_FPS}

Suppose that $L$ is a non-negative self-adjoint operator on
$L^2(X)$, and that the semigroup $\{e^{-tL}\}_{t>0}$ generated
by $L$ on $L^2(X)$ has the kernel $p_{t}(x,y)$.

\medskip

{\bf (a) Gaussian estimates:}  The kernel $p_{t}(x,y)$ has
\emph{Gaussian upper bounds}~\eqref{Gaussian} if there are
positive constants $C$ and $c$ such that for all $x$, $y\in X$
and all $t > 0$,
\begin{equation}\label{Gaussian}
    |p_{t}(x,y)|
    \leq \frac{C}{V(x,t^{1/2})}
        \exp\Big(-\frac{d(x,y)^2}{c\,t}\Big).
    \tag{GE}
\end{equation}


\smallskip
{\bf (b) Generalized Gaussian estimates:}\ We say that
$\{e^{-tL}\}_{t>0}$ satisfies the {\it generalized Gaussian
estimates}~\eqref{generalGE}, for a given $p\in[1,2]$, if there
are positive constants $C$ and~$c$ such that for all $x$, $y\in
X$ and all $t > 0$,
\begin{equation}\label{generalGE}
    \|P_{B(x,t^{1/2})}e^{-tL}P_{B(y,t^{1/2})}\|_{L^p(X)\to L^{p'}(X)}
    \leq C V(x,t^{1/2})^{-(1/p-1/p')}\exp\Big(-\frac{d(x,y)^2}{c\,t}\Big),
    \tag{${\rm GGE}_{p}$}
\end{equation}
where $1/p + 1/p' = 1$.

\smallskip
{\bf (c) Davies--Gaffney estimates:} We say that
$\{e^{-tL}\}_{t>0}$ satisfies the \emph{Davies--Gaffney
condition}~\eqref{DaviesGaffney} if there are positive
constants $C$ and~$c$ such that for all open subsets
$U_1,\,U_2\subset X$ and all $t > 0$,
\begin{equation}\label{DaviesGaffney}
    |\langle e^{-tL}f_1, f_2\rangle|
    \leq C\exp\Big(-\frac{{\rm dist}(U_1,U_2)^2}{c\,t}\Big)
        \|f_1\|_{L^2(X)}\|f_2\|_{L^2(X)}
    \tag{DG}
\end{equation}
for every $f_i\in L^2(X)$ with $\mbox{supp}\,f_i\subset U_i$,
$i=1,2$. Here ${\rm dist}(U_1,U_2) := \inf_{x\in U_1, y\in U_2}
d(x,y)$.

\smallskip
{\bf (d) Finite propagation speed:}
    We say that $L$ satisfies the \emph{finite propagation
    speed  property}~\eqref{FPS} for solutions of the
    corresponding wave equation if for all open sets
    $U_i\subset X$ and all $f_i\in L^2(U_i)$, $i = 1$, 2, we
    have
    \begin{equation}\label{FPS}
        \langle \cos(t\sqrt{L})f_1,f_2\rangle = 0 \tag{FS}
    \end{equation}
    for all $t\in(0,d(U_1,U_2))$. 

As the following lemma notes, it is known that the
Davies--Gaffney estimates and the finite propagation speed
property are equivalent. For the proof, see for
example~\cite[Theorem~3.4]{CS2008}.

\begin{lemma}\label{FSDG}
    Let $L$ be a non-negative self-adjoint operator acting on
    $L^2(X)$. Then the finite propagation speed
    property~\eqref{FPS} and the Davies--Gaffney
    estimates~\eqref{DaviesGaffney} are equivalent.
\end{lemma}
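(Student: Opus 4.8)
The plan is to prove both implications through the subordination identity linking the heat semigroup and the wave propagator, together with the spectral bound $\|\cos(s\sqrt L)\|_{L^2(X)\to L^2(X)}\le 1$, which holds for any non-negative self-adjoint $L$ since $|\cos(s\lambda)|\le 1$. Concretely, the spectral theorem gives, in the strong operator topology,
\[
    e^{-tL}=\frac{1}{\sqrt{\pi t}}\int_0^\infty e^{-s^2/(4t)}\cos(s\sqrt L)\,ds, \qquad t>0,
\]
obtained by substituting $\sqrt L$ into the scalar identity $e^{-t\lambda^2}=(\pi t)^{-1/2}\int_0^\infty e^{-s^2/(4t)}\cos(s\lambda)\,ds$. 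I will also use that $z\mapsto e^{-zL}$ is a bounded holomorphic semigroup on the right half-plane, with $\|e^{-zL}\|_{L^2\to L^2}\le 1$ for $\operatorname{Re}z\ge 0$.

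For the implication \eqref{FPS}$\Rightarrow$\eqref{DaviesGaffney}, fix $f_i\in L^2(X)$ with $\operatorname{supp}f_i\subset U_i$ and write $r:=d(U_1,U_2)$. Pairing the subordination identity against $f_1,f_2$ and invoking \eqref{FPS}, the inner product $\langle\cos(s\sqrt L)f_1,f_2\rangle$ vanishes for $0<s<r$, so only the range $s\ge r$ contributes:
\[
    \langle e^{-tL}f_1,f_2\rangle=\frac{1}{\sqrt{\pi t}}\int_r^\infty e^{-s^2/(4t)}\langle\cos(s\sqrt L)f_1,f_2\rangle\,ds.
\]
Estimating the cosine factor by $\|f_1\|_{L^2}\|f_2\|_{L^2}$ and using the elementary tail bound $(\pi t)^{-1/2}\int_r^\infty e^{-s^2/(4t)}\,ds\le Ce^{-r^2/(8t)}$ yields \eqref{DaviesGaffney} with $c=8$. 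This direction is routine.

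The reverse implication \eqref{DaviesGaffney}$\Rightarrow$\eqref{FPS} is the substantial one, and I would argue by analytic continuation in the time variable. The function $g(z):=\langle e^{-zL}f_1,f_2\rangle$ is holomorphic on $\{\operatorname{Re}z>0\}$ and satisfies $|g(z)|\le\|f_1\|_{L^2}\|f_2\|_{L^2}$ there, while \eqref{DaviesGaffney} gives $|g(t)|\le Ce^{-r^2/(ct)}\|f_1\|_{L^2}\|f_2\|_{L^2}$ on the positive real axis. Setting $F(z):=e^{r^2/(cz)}g(z)$, one checks that $F$ is bounded on the positive real axis (by the DG decay) and on the positive imaginary axis, where $\operatorname{Re}(1/z)=0$ so that $|e^{r^2/(cz)}|=1$ and $|g(is)|\le\|f_1\|_{L^2}\|f_2\|_{L^2}$ because $e^{-isL}$ is the unitary Schr\"odinger group. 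A Phragm\'en--Lindel\"of argument on the first quadrant then propagates the bound into the interior, producing the complex-time Davies--Gaffney estimate $|g(z)|\le C\exp(-\tfrac{r^2}{c}\operatorname{Re}(1/z))\|f_1\|_{L^2}\|f_2\|_{L^2}$ for $\operatorname{Re}z>0$. Finally I would invert the subordination relation: on the imaginary axis the Gauss--Weierstrass transform above degenerates into a Fresnel/Fourier transform, so $u(t):=\langle\cos(t\sqrt L)f_1,f_2\rangle$ is recovered from the boundary values $g(is)$, and the established decay forces, via a Paley--Wiener/Tauberian reading, $u$ to vanish on $(-r,r)$, which is exactly \eqref{FPS}.

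The main obstacle is the Phragm\'en--Lindel\"of step: the multiplier $e^{r^2/(cz)}$ has an essential singularity at the corner $z=0$ of the quadrant, and the admissible growth for Phragm\'en--Lindel\"of on an opening of angle $\pi/2$ must be checked with care. I would handle this by working on truncated sectors $\{\varepsilon<|z|<R\}$, inserting a regularizing factor such as $e^{-\varepsilon z}$ to control the behavior at infinity and near the origin, and then letting $\varepsilon\to 0$ and $R\to\infty$. The careful execution of this continuation and of the inversion of the subordination formula is precisely the content of \cite[Theorem~3.4]{CS2008}, to which the detailed argument can be deferred.
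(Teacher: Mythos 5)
The paper offers no proof of this lemma at all: it is stated as known, with a pointer to \cite[Theorem~3.4]{CS2008}, and your proposal ultimately rests on that same citation, so in substance you are taking the same route. What you add is genuine, though. Your proof of \eqref{FPS}$\Rightarrow$\eqref{DaviesGaffney} via the subordination formula and the Gaussian tail estimate is complete and correct (and in fact yields $c=4$, not just $c=8$). Your outline of the converse --- holomorphy and contractivity of $z\mapsto\langle e^{-zL}f_1,f_2\rangle$ on the right half-plane, a Phragm\'en--Lindel\"of upgrade of the real-time decay to a complex-time bound, then inversion of the subordination/Laplace transform by a Paley--Wiener argument --- is a faithful sketch of how Coulhon and Sikora actually argue, and you correctly identify the real technical obstacle, namely the essential singularity of the multiplier $e^{r^2/(cz)}$ at the corner of the sector.

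One caveat you should record, since it affects the last line of your sketch: with the constants as stated, the equivalence is only true up to rescaling. Decay $e^{-r^2/(ct)}$ with $c>4$ can only force $\langle\cos(t\sqrt L)f_1,f_2\rangle=0$ for $t<2r/\sqrt{c}$, i.e.\ propagation speed $\sqrt{c}/2$, not vanishing on all of $(0,r)$; the operator $L=-a\Delta$ on ${\mathbb R}^n$ with $a>1$ satisfies \eqref{DaviesGaffney} (with $c=4a$) yet propagates at speed $\sqrt{a}>1$. The sharp statement in \cite[Theorem~3.4]{CS2008} pairs unit-speed \eqref{FPS} with the Davies--Gaffney bound taken with $C=1$ and $c=4$. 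This imprecision is present in the paper's own formulation of the lemma, so you inherit it rather than introduce it; but your claim that the Paley--Wiener step ``forces $u$ to vanish on $(-r,r)$'' is literally correct only when $c\le 4$, and a careful write-up should either normalize the constant or state the conclusion as finite (rather than unit) propagation speed.
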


\begin{remark}
    Note that when $p=2$, it is shown in \cite[Lemma 3.1]{CS2008}
    that the generalized Gaussian estimates are the same as the
    Davies--Gaffney estimates~\eqref{DaviesGaffney}. Also, when
    $p=1$, the generalized Gaussian estimates~\eqref{generalGE}
    are equivalent to the Gaussian estimates~\eqref{Gaussian}
    (see~\cite[Proposition~2.9]{BK1}). By H\"older's inequality, we see
    that if an operator satisfies the generalized Gaussian estimates
    for some $p$ with $1 < p < 2$, then it also satisfies the
    generalized Gaussian estimates $({\rm GGE}_{q})$ for all $q$
    with $p < q \le 2$. In particular,
    \begin{eqnarray*}
        \eqref{Gaussian}
        \iff  \eqref{generalGE} \textup{ with } p=1
        \implies  \eqref{generalGE} \textup{ with } p\in(1,2]
        \implies \eqref{DaviesGaffney}
        \iff \eqref{FPS}.
    \end{eqnarray*}

    We also note that if the generalized Gaussian estimates
    \eqref{generalGE} hold for some $p\in[1,2)$, then the
    operator $L$ is injective on $L^2(X)$ (see Theorem~\ref{theorem
    injective}).
\end{remark}

Suppose $L$ is a non-negative self-adjoint operator acting on
$L^2(X)$, and satisfying the Davies--Gaffney
estimates~\eqref{DaviesGaffney}. Then the following result
holds.

\begin{lemma}[Lemma~3.5, \cite{HLMMY}]\label{lemma finite speed}
    Let $\varphi\in C^{\infty}_0(\mathbb R)$ be an even
    function with $\supp\,\varphi \subset (-1,
    1)$. Let $\Phi$ denote the Fourier
    transform of~$\varphi$. Then for every
    $\kappa=0,1,2,\dots$, and for every $t>0$, the kernel
    $K_{(t^2L)^{\kappa}\Phi(t\sqrt{L})}(x,y)$ of the operator
    $(t^2L)^{\kappa}\Phi(t\sqrt{L})$, which is defined via
    spectral theory, satisfies
    \begin{eqnarray}\label{e3.11}
        {\rm supp}\ \! K_{(t^2L)^{\kappa}\Phi(t\sqrt{L})}(x,y)
        \subseteq \Big\{(x,y)\in X\times X: d(x,y)\leq t\Big\}.
    \end{eqnarray}
\end{lemma}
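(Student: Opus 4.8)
The plan is to reduce the statement to the finite propagation speed property~\eqref{FPS}, which is available because $L$ satisfies~\eqref{DaviesGaffney} and these are equivalent by Lemma~\ref{FSDG}. Since $\cos(u\sqrt{L})$ need not possess a genuine pointwise kernel, I interpret the conclusion in its weak (bilinear) form: it suffices to prove that
\[
    \big\langle (t^2L)^{\kappa}\Phi(t\sqrt{L})f_1, f_2\big\rangle = 0
\]
whenever $f_1, f_2\in L^2(X)$ have supports $U_1, U_2$ with $d(U_1,U_2) > t$. This is precisely the assertion that the distributional kernel of $(t^2L)^{\kappa}\Phi(t\sqrt{L})$ is supported in $\{(x,y): d(x,y)\le t\}$.

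First I would rewrite the spectral function as a superposition of cosines. Using $\Phi=\widehat{\varphi}$ together with the facts that $\varphi$ is even and supported in $(-1,1)$ gives, for every $\lambda\ge 0$,
\[
    \Phi(\lambda) = \int_{-1}^{1}\varphi(s)\cos(\lambda s)\,ds.
\]
To absorb the factor $\lambda^{2\kappa}$ coming from $(t^2L)^{\kappa}\Phi(t\sqrt{L}) = \psi(t\sqrt{L})$, where $\psi(\lambda):=\lambda^{2\kappa}\Phi(\lambda)$ and $(t\sqrt{L})^{2\kappa}=(t^2L)^{\kappa}$, I would use the identity $\lambda^{2\kappa}\cos(\lambda s) = (-1)^{\kappa}\partial_s^{2\kappa}\cos(\lambda s)$ and integrate by parts $2\kappa$ times; the boundary terms vanish since $\varphi\in C_0^{\infty}((-1,1))$. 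This yields
\[
    \psi(\lambda) = \int_{-1}^{1}\widetilde{\varphi}(s)\cos(\lambda s)\,ds,
    \qquad \widetilde{\varphi} := (-1)^{\kappa}\varphi^{(2\kappa)},
\]
where $\widetilde{\varphi}$ is again an even function in $C_0^{\infty}((-1,1))$.

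Next I would lift this scalar identity to the level of operators. Applying the spectral theorem for $\sqrt{L}$ to the pair $f_1, f_2$ and interchanging the spectral integral with the $s$-integral, I obtain
\[
    \big\langle \psi(t\sqrt{L})f_1, f_2\big\rangle = \int_{-1}^{1}\widetilde{\varphi}(s)\,\big\langle \cos(ts\sqrt{L})f_1, f_2\big\rangle\,ds.
\]
The interchange is justified by Fubini, since $\widetilde{\varphi}\in L^1$, $|\cos|\le 1$, and the complex measure $\langle E(\cdot)f_1, f_2\rangle$ associated with the spectral resolution of $\sqrt{L}$ has finite total variation bounded by $\|f_1\|_2\|f_2\|_2$. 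Finally, for $s\in(-1,1)$ we have $|ts| < t < d(U_1,U_2)$, so the finite propagation speed property~\eqref{FPS} forces $\langle \cos(ts\sqrt{L})f_1, f_2\rangle = 0$ for every such $s$. Hence the integral vanishes, which gives the claimed support statement.

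The step I expect to require the most care is the rigorous interpretation and justification of the middle identity: because $\cos(u\sqrt{L})$ is only controlled through the weak relation~\eqref{FPS}, one must phrase the support conclusion bilinearly and verify the Fubini interchange against the spectral measure, rather than manipulate pointwise kernels directly. Once this framework is in place, the Fourier computation and the application of~\eqref{FPS} are routine.
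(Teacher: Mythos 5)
Your proof is correct and is essentially the argument behind the cited result (the paper itself does not reprove this lemma but quotes it from \cite{HLMMY}, where Lemma~3.5 is established in exactly this way): expand $\Phi$ as a cosine transform of $\varphi$, integrate by parts $2\kappa$ times to absorb $\lambda^{2\kappa}$ into $(-1)^{\kappa}\varphi^{(2\kappa)}$, and invoke the finite propagation speed property \eqref{FPS}, available from \eqref{DaviesGaffney} via Lemma~\ref{FSDG}, for each $\cos(ts\sqrt{L})$ with $|s|<1$. Your bilinear interpretation of the support statement and the Fubini justification against the spectral measure are exactly the right way to make this rigorous.
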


\medskip

\noindent {\bf Examples.} \ We now describe some operators
where property~\eqref{FPS} and the estimates~\eqref{generalGE}
hold for some $p$ with $1\leq p < 2$.

Let $V\in L^1_{\rm loc}({\mathbb R}^n)$ be a non-negative
function. The Schr\"odinger operator with potential $V$ is
defined by $L=-\Delta+V$ on ${\mathbb R}^n$, where $n\geq3$.
From the well-known Trotter--Kato product formula, it follows
that the semigroup $e^{-tL}$ has a kernel $p_t(x,y)$ satisfying
\begin{equation}\label{e8.4}
    0
    \leq  p_t(x,y)
    \leq (4\pi t)^{-{n\over 2}}\exp\Big(-\frac{|x-y|^2}{4t}\Big)\ \  \
    {\rm for\ all\ }\  t>0, \ \ x,y\in{\Bbb R}^n.
\end{equation}
\noindent See~\cite[page~195]{Ou}. It follows that
property~\eqref{FPS} and the estimates~\eqref{generalGE} hold
with $p=1.$

Next we consider inverse square potentials, that is $V(x) = c/|
x |^2$.  Fix $n \geq 3$ and assume that $c >  -{(n-2)^2/4}$.
Define $L := -\Delta + V$ to be the standard quadratic form on
$L^2(\RR^n, dx)$. The classical Hardy inequality
\begin{equation}\label{hardy1}
    - \Delta\geq  {(n-2)^2\over 4}|x|^{-2},
\end{equation}
shows that for all $c > -{(n-2)^2/4}$,  the self-adjoint
operator $L$ is non-negative. Set $p_c^{\ast} := n/\sigma$, and
$\sigma := \max\{ (n-2)/2-\sqrt{(n-2)^2/4+c}, 0\}$. If $c \ge
0$ then the semigroup $\exp(-tL)$ is pointwise bounded by the
Gaussian semigroup and hence acts on all $L^p$ spaces with $1
\le p \le \infty$.  If $ c < 0$, then $\exp(-tL)$ acts as a
uniformly bounded semigroup on  $L^p(\RR^n)$ for $ p \in
((p_c^{\ast})', p_c^{\ast})$ and the range
 $((p_c^{\ast})', p_c^{\ast})$ is optimal (see for example~\cite{LSV}).
It is known (see for instance~\cite{CS2008}) that $L$ satisfies
property~\eqref{FPS} and the estimates~\eqref{generalGE} for
all $p \in ((p_c^{\ast})', 2n/(n+2)]$. If $c \ge 0$, then $p =
(p_c^{\ast})' = 1$ is included.

It is also known (see \cite{KU}) that the
estimates~\eqref{generalGE} hold for some $p$ with $1\leq p <
2$ (and hence the property~\eqref{FPS} also holds) when $L$ is
the second order Maxwell operator with measurable coefficient
matrices, or the Stokes operator with Hodge boundary conditions
on bounded Lipschitz domains in ${\mathbb R}^3$, or the
time-dependent Lam\'e system equipped with homogeneous
Dirichlet boundary conditions.

\subsection{Main results: Product Hardy spaces associated with
operators} We begin this section by defining the Hardy space
$H^2({X_1\times X_2})$. Next we introduce the area function
$Sf$, and use it to define the Hardy space
$H^1_{{L_1,L_2}}({X_1\times X_2})$ associated to $L_1$ and
$L_2$ (Definition~\ref{def of Hardy space via S function}). We
define $(H^1_{L_1, L_2}, 2, N)$-atoms $a(x_1,x_2)$
(Definition~\ref{def H1 atom}) and use them to define the
atomic Hardy space $H^1_{L_1, L_2, at,N}({X_1\times X_2})$
(Definition~\ref{def-of-atomic-product-Hardy-space}). We show
that these two definitions of this Hardy space coincide
(Theorem~\ref{theorem of Hardy space atomic decom}). We also
define the Hardy space $H^p_{L_1,L_2}(X_1\times X_2)$
associated to $L_1$ and $L_2$, via a modified area function
(Definition~\ref{def2.2}). We present the Calder\'on--Zygmund
decomposition of the Hardy spaces $H^p_{L_1,L_2}(X_1\times
X_2)$ (Theorem~\ref{theorem C-Z decomposition for Hp}). We use
this decomposition to establish two interpolation results and
to show that $H^p_{L_1,L_2}(X_1\times X_2)$ coincides with
$L^p(X_1\times X_2)$ for an appropriate range of~$p$
(Theorems~\ref{theorem interpolation Hp}
and~\ref{theorem-Hp-Lp}).

We work with the product of spaces of homogeneous
type~$(X_1,d_1,\mu_1)\times (X_2,d_2,\mu_2)$.  Here, for $i =
1$, 2, $(X_i,d_i,\mu_i)$ is a space of homogeneous type with
upper dimension~$n_i$, as in
Definition~\ref{def:space_of_homog_type}, and $\mu_i(X_i)=\infty$.

Following \cite{AMR}, one can define the \emph{$L^2({X_1\times
X_2})$-adapted Hardy space}
\begin{equation}\label{eq2.H2}
    H^2({X_1\times X_2})
    := \overline{R(L_1\otimes L_2)},
\end{equation}
that is, the closure of the range of $L_1\otimes L_2$ in
$L^2({X_1\times X_2})$.  Then $L^2({X_1\times X_2})$ is the
orthogonal sum of $H^2({X_1\times X_2})$ and the null space
$N(L_1\otimes L_2)=\{ f\in L^2({X_1\times X_2}): (L_1\otimes
L_2)f=0 \}$.

We shall work with the domain $(X_{1}\times \mathbb{R}_+)
\times (X_{2}\times \mathbb{R}_+)$ and its distinguished
boundary ${X_1\times X_2}$. For $x = (x_1,x_2)\in {X_1\times
X_2}$, denote by $\Gamma(x)$ the product cone $\Gamma(x) :=
\Gamma_1(x_1)\times\Gamma_2(x_2)$, where $\Gamma_i(x_i) :=
\{(y_i,t_i)\in X_{i}\times \mathbb{R}_+: d_i(x_i,y_i) < t_i\}$
for $i = 1$, 2.

Our first definition of the product Hardy space
$H^1_{L_1,L_2}(X_1\times X_2)$ associated to operators $L_1$
and~$L_2$ is via an appropriate area function. For $i = 1$, 2,
suppose that $L_i$ is a non-negative self-adjoint operator on
$X_i$ such that the corresponding heat semigroup $e^{-tL_i}$
satisfies the Davies--Gaffney estimates~\eqref{DaviesGaffney}.
Given a function $f$ on $L^2({X_1\times X_2})$, the \emph{area
function}~$Sf$ associated with the operators $L_1$ and $L_2$ is
defined by
\begin{eqnarray}\label{esf}
    \hskip.7cm Sf(x)
    := \bigg(\iint_{\Gamma(x) }\big|\big(
        t_1^2L_1e^{-t_1^2L_1} \otimes t_2^2L_2e^{-t_2^2L_2}\big)f(y)\big|^2\
        {d\mu_1(y_1) dt_1 d\mu_2(y_2) dt_2 \over t_1V(x_1,t_1) t_2V(x_2,t_2)}\bigg)^{1/2}.
   \end{eqnarray}
Since $L_1$ and $L_2$ are non-negative self-adjoint operators,
it is known from $H_\infty$ functional calculus~\cite{M}  that there exist
constants $C_1$ and $C_2$ with $0 < C_1 \leq C_2 < \infty$ such
that
$$  \|Sf\|_2\leq C_2\|f\|_2  $$
for all $f\in L^2(X_1\times X_2)$, and (by duality)
$$  C_1\|f\|_2\leq \|Sf\|_2 $$
for all $f\in H^2(X_1\times X_2)$.


\begin{definition} \label{def of Hardy space via S function}
    For $i = 1$, 2, let $L_i$ be a non-negative self-adjoint
    operator on $L^2(X_i)$ such that the corresponding heat
     semigroup $e^{-tL_i}$ satisfies the Davies--Gaffney
     estimates~\eqref{DaviesGaffney}. The
    \emph{Hardy space $H^1_{{L_1,L_2}}({X_1\times X_2})$ associated
    to $L_1$ and $L_2$} is defined as the completion of the set
    \[
        \{f\in H^2({X_1\times X_2}) :
        \|Sf\|_{L^1({X_1\times X_2})} < \infty\}
    \]
    with respect to the norm
    \[
        \|f\|_{H^{1}_{{L_1,L_2}}({X_1\times X_2}) }
        := \|Sf \|_{L^1({X_1\times X_2})}.
    \]
\end{definition}


We now introduce the notion of $(H^1_{L_1, L_2}, 2, N)$-atoms
associated to operators.

\begin{definition}\label{def H1 atom}
    Let $N$ be a positive integer. A function $a(x_1, x_2)\in
    L^2({X_1\times X_2})$ is called an $(H^1_{L_1, L_2}, 2,
    N)$-\emph{atom} if it satisfies the following conditions:
    \begin{enumerate}
        \item there is an open set $\Omega$ in ${X_1\times
            X_2}$ with finite measure such that
            $\supp\,a\subset \Omega$; and

        \item $a$ can be further decomposed as
        \[
            a = \sum\limits_{R\in m(\Omega)} a_R,
        \]
        where $m(\Omega)$ is the set of all maximal dyadic
        rectangles contained in $\Omega$, and for each
        $R\in m(\Omega)$ there exists a function $b_R$ such
        that for all $\sigma_1$, $\sigma_2 \in \{0, 1,
        \ldots, N\}$, $b_R$ belongs to the range of
        $L_1^{\sigma_1}\otimes L_2^{\sigma_2}$ in
        $L^2({X_1\times X_2})$ and
            \begin{enumerate}
                \item[(i)] $a_R = \big(L_1^{N} \otimes
                    L_2^{N}\big) b_R$;

                \item[(ii)] $\supp\,\big(L_1^{\sigma_1}
                    \otimes
                    L_2^{\sigma_2}\big)b_R\subset
                    \overline{C}R$;

                \item[(iii)] $||a||_{L^2( {X_1\times X_2}
                    )}\leq \mu(\Omega)^{-1/2}$ and
                    \[
                        \sum_{R = I\times J \in m(\Omega)}
                            \ell(I)^{-4N} \ell(J)^{-4N}
                            \Big\|\big(\ell(I)^2 \, L_1\big)^{\sigma_1}\otimes
                            \big(\ell(J)^2 \, L_2\big)^{\sigma_2}
                            b_R\Big\|_{L^2({X_1\times X_2})}^2
                        \leq \mu(\Omega)^{-1}.
                    \]
            \end{enumerate}
        \end{enumerate}
\end{definition}

Here $R = I\times J$, $\overline{C}$ is a fixed constant, and
$\overline{C}R$ denotes the product $\overline{C}I\times
\overline{C}J$ of the balls which are the $\overline{C}$-fold
dilates of $I$ and~$J$ respectively, as defined in Section~3.

We can now define an atomic $H^1_{L_1,L_2,at,N}$ space, which
we shall show is equivalent to the space $H^1_{L_1,L_2}$
defined above via area functions.

\begin{definition}\label{def-of-atomic-product-Hardy-space}
    Let $N$ be a positive integer with $N > \max\{n_1,
    n_2\}/4$, where $n_i$ is the upper doubling
    dimension of~$X_i$, $i = 1$,~2. 
    We say that $f = \sum\lambda_ja_j$ is an \emph{atomic
    $(H^1_{L_1, L_2}, 2, N)$-representation of $f$} if
    $\{\lambda_j\}_{j=0}^\infty\in \ell^1$, each $a_j$ is an
    $(H^1_{L_1, L_2}, 2, N)$-atom, and the sum converges in
    $L^2({X_1\times X_2})$. Set
    \[
        \mathbb{H}^1_{L_1, L_2, at, N}({X_1\times X_2})
        := \big\{f: f~\text{has an atomic
            $(H^1_{L_1, L_2}, 2, N)$-representation}\big\},
    \]
    with the norm given by
    \begin{eqnarray}\label{Hp norm}
    &&\|f\|_{\mathbb{H}^1_{L_1, L_2, at, N}({X_1\times X_2})}\\
     &&\
    \ :=\inf\Big\{ \sum_{j=0}^\infty |\lambda_j|:
        f = \sum_j\lambda_ja_j~\text{is an atomic
        $(H^1_{L_1, L_2}, 2, N)$-representation} \Big\}.\nonumber
    \end{eqnarray}
    %
    The \emph{Hardy space} $H^1_{L_1, L_2, at,N}({X_1\times X_2})$
    is then defined as the completion of $\mathbb{H}^1_{L_1,
    L_2,at,N}({X_1\times X_2})$ with respect to this norm.
\end{definition}


Our first result is that the ``area function" and ``atomic"
$H^1$ spaces coincide, with equivalent norms, if the parameter
$N > \max\{n_1, n_2\}/4$.

\begin{theorem}\label{theorem of Hardy space atomic decom}
    Let $(X_i,d_i,\mu_i)$ be spaces of homogeneous type with
    upper dimension~$n_i$, for $i = 1$, $2$. Suppose $N
    > \max\{n_1, n_2\}/4$. Then
    \[
        H^1_{{L_1,L_2}}({X_1\times X_2})
        = H^1_{L_1,L_2,at,N}({X_1\times X_2}).
    \]
    Moreover,
    \[
        \|f\|_{H^1_{{L_1,L_2}}({X_1\times X_2})}
        \sim \|f\|_{H^1_{L_1,L_2,at,N}({X_1\times X_2})},
    \]
    where the implicit constants depend only on $N$, $n_1$
    and~$n_2$.
\end{theorem}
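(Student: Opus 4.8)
The plan is to prove the two set inclusions separately, together with the matching norm bounds, and then conclude by density of $H^2(X_1\times X_2)$. Throughout, the key analytic input is the quadratic estimate $\|Sf\|_2\sim\|f\|_2$ on $H^2(X_1\times X_2)$ recorded just after \eqref{esf}, which (via the $H_\infty$ functional calculus) yields a biparameter Calder\'on reproducing formula
\[
 f=c\int_0^\infty\!\!\int_0^\infty \big(t_1^2L_1e^{-t_1^2L_1}\otimes t_2^2L_2e^{-t_2^2L_2}\big)^2 f\,\frac{dt_1}{t_1}\,\frac{dt_2}{t_2},
\]
converging in $L^2(X_1\times X_2)$ for $f\in H^2$. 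I will write $\mathcal Q_{t_1,t_2}f$ for $\big(t_1^2L_1e^{-t_1^2L_1}\otimes t_2^2L_2e^{-t_2^2L_2}\big)f$.

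First I would show $H^1_{L_1,L_2,at,N}\subseteq H^1_{L_1,L_2}$ with $\|f\|_{H^1_{L_1,L_2}}\lesssim\|f\|_{H^1_{at,N}}$. By definition of the atomic norm it suffices to prove a uniform bound $\|Sa\|_{L^1}\le C$ for every $(H^1_{L_1,L_2},2,N)$-atom $a$ with $\supp a\subset\Omega$, with $C$ independent of $a$. I split $\|Sa\|_{L^1}=\int_{\widetilde\Omega}Sa\,d\mu+\int_{(\widetilde\Omega)^c}Sa\,d\mu$, where $\widetilde\Omega$ is a suitable enlargement of $\Omega$. For the local term, Cauchy--Schwarz and the $L^2$ boundedness of $S$ give $\int_{\widetilde\Omega}Sa\le \mu(\widetilde\Omega)^{1/2}\|Sa\|_2\lesssim\mu(\widetilde\Omega)^{1/2}\|a\|_2\le(\mu(\widetilde\Omega)/\mu(\Omega))^{1/2}$, so the point is to choose the enlargement so that $\mu(\widetilde\Omega)\lesssim\mu(\Omega)$; in the product setting this is exactly where Journ\'e's covering lemma enters, applied to the maximal dyadic rectangles of $m(\Omega)$. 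For the global term I use $a=\sum_{R}a_R$ with $a_R=(L_1^N\otimes L_2^N)b_R$ and the triangle inequality $Sa\le\sum_R Sa_R$ in $L^2(\Gamma(x))$, so that $\int_{(\widetilde\Omega)^c}Sa\le\sum_R\int_{(\widetilde\Omega)^c}Sa_R$; expressing each factor $t_i^2L_ie^{-t_i^2L_i}$ through even functions of $\sqrt{L_i}$ with compactly supported Fourier transform, the finite propagation speed of Lemma~\ref{lemma finite speed} localizes the kernels to $d_i(x_i,y_i)\lesssim t_i$, while the factorization $a_R=(L_1^N\otimes L_2^N)b_R$ converts the cancellation into decay of order $\ell(I)^{2N}\ell(J)^{2N}$ relative to the scale of $R=I\times J$. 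Summing these gains against the normalization~(iii) of Definition~\ref{def H1 atom}, and using $N>\max\{n_1,n_2\}/4$ to force the resulting double geometric series over dyadic distances to converge, yields $\int_{(\widetilde\Omega)^c}Sa\lesssim 1$.

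Conversely, for $H^1_{L_1,L_2}\subseteq H^1_{L_1,L_2,at,N}$, I start from $f\in H^2$ with $\|Sf\|_1<\infty$ and run the reproducing formula, decomposing the biparameter half-space according to the level sets $\Omega_k:=\{x:Sf(x)>2^k\}$. Over each $\Omega_k$ I group the relevant $(y,t)$ into tent regions above the maximal dyadic rectangles of a fixed dilate of $\Omega_k$, and define a candidate atom $a_k$ by integrating $\mathcal Q_{t_1,t_2}^2 f$ over the shell between consecutive tents, with the associated $b_R$ obtained by inserting $L_1^{-N}\otimes L_2^{-N}$ (legitimate since one works on $H^2=\overline{R(L_1\otimes L_2)}$). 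One then checks the support condition~(ii) of Definition~\ref{def H1 atom} using finite propagation speed, condition~(i) by construction, and the crucial $L^2$ normalization~(iii) from the quadratic estimate together with the definition of $\Omega_k$, so that $a_k$ is a fixed multiple of an atom and the coefficients satisfy $\lambda_k\sim 2^k\mu(\Omega_k)$ with $\sum_k\lambda_k\sim\sum_k 2^k\mu(\Omega_k)\sim\|Sf\|_1$; convergence of $\sum_k\lambda_ka_k$ to $f$ in $L^2$ then follows from the reproducing formula.

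The main obstacle is the product geometry in both directions: unlike the one-parameter case there is no Whitney decomposition of $\Omega_k$ into cubes of bounded overlap, so all bookkeeping must be carried out through the maximal dyadic rectangles, and the measures of the enlarged sets $\widetilde\Omega$ and $\widetilde\Omega_k$ are controlled not by the doubling property but by Journ\'e's covering lemma adapted to $X_1\times X_2$. Correspondingly, the decay estimates for $Sa_R$ off $\widetilde\Omega$ must be organized so that, after applying finite propagation speed and the $2N$-fold smoothing in each variable, the double sum over dyadic distances in the two factors converges; this is the step that forces the hypothesis $N>\max\{n_1,n_2\}/4$ and that carries essentially all the technical weight of the proof.
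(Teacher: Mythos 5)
Your outline reproduces the paper's overall strategy (a uniform bound $\|Sa\|_{L^1}\le C$ on atoms plus a tent-space/stopping-time construction for the converse inclusion), but two of your key steps fail as stated. In the global part of the atom estimate you invoke Lemma~\ref{lemma finite speed} to ``localize'' the kernels of $t_i^2L_ie^{-t_i^2L_i}$ to $d_i(x_i,y_i)\lesssim t_i$. This is not available: the heat semigroup does not have finite propagation speed, and $x^2e^{-x^2}$ is not of the form $(x^2)^{\kappa}\Phi(x)$ with $\Phi$ the Fourier transform of a compactly supported function, so Lemma~\ref{lemma finite speed} does not apply to these factors. The paper instead estimates the off-diagonal contributions directly from the Davies--Gaffney estimates~\eqref{DaviesGaffney} applied to $(t_1^2L_1)^{N+1}e^{-t_1^2L_1}$, after splitting each $t_i$-integral into the ranges $(0,\ell(I))$, $(\ell(I),d_1(x_1,x_I)/4)$ and $(d_1(x_1,x_I)/4,\infty)$; finite propagation speed is used only in the converse direction, where the relevant functions $\psi,\phi$ genuinely have compactly supported Fourier transforms. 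Relatedly, you place Journ\'e's lemma in the wrong spot: the control $\mu(\widetilde\Omega)\lesssim\mu(\Omega)$ of the enlarged set comes from the strong maximal function theorem, not from Journ\'e. Journ\'e's lemma (Lemma~\ref{lemma-Journe}) is what closes the \emph{global} sum: each rectangle contributes a factor $\big(\ell(I)/\ell(I^*)\big)^{\epsilon_1}\mu(R)^{1/2}$ times a normalized $L^2$ quantity, and after Cauchy--Schwarz against condition~(2)(iii) one must bound $\sum_{R\in m(\Omega)}\mu(R)\big(\ell(I)/\ell(I^*)\big)^{2\epsilon_1}\lesssim\mu(\Omega)$, which is exactly \eqref{J1}--\eqref{J2}. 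Your claim that the normalization plus $N>\max\{n_1,n_2\}/4$ alone forces ``the double geometric series'' to converge is not sufficient; without Journ\'e the sum over $m(\Omega)$ is uncontrolled.

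In the converse inclusion there is a second genuine gap: you define $b_R$ by ``inserting $L_1^{-N}\otimes L_2^{-N}$,'' justified by working in $H^2=\overline{R(L_1\otimes L_2)}$. Under the standing hypothesis (Davies--Gaffney only) the operators $L_i$ need not be injective --- the paper needs the stronger assumption $({\rm GGE}_{p_0})$ to prove injectivity (Theorem~\ref{theorem injective}) --- and even when an inverse exists on the closure of the range it is unbounded and merely densely defined, so this step yields neither a well-defined $b_R$ nor the quantitative bound of Definition~\ref{def H1 atom}(2)(iii). The paper avoids inverses entirely: in Proposition~\ref{prop-product H-SL subset H-at} the function $b_{\overline{R}}$ is built directly from the reproducing formula with kernels $t_1^{2N}t_2^{2N}K_{\phi(t_1\sqrt{L_1})}(x_1,y_1)K_{\phi(t_2\sqrt{L_2})}(x_2,y_2)$, where $\psi(x)=x^{2N}\phi(x)$, so that $a_{\overline{R}}=(L_1^N\otimes L_2^N)b_{\overline{R}}$ holds by construction, and the factors $t_i^{2N}$ (comparable to $\ell(\overline I)^{2N},\ell(\overline J)^{2N}$ on each tent $T(R)$) are precisely what produce the normalization~(iii) via the duality argument $\|\cdot\|_{L^2}=\sup_{\|h\|_{L^2}=1}|\langle\cdot,h\rangle|$ and the definition of $\lambda_\ell$. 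Your stopping-time bookkeeping ($\lambda_k\sim 2^k\mu(\Omega_k)$, $\sum_k\lambda_k\lesssim\|Sf\|_{L^1}$, $L^2$ convergence) matches the paper once this repair is made.
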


It follows that
Definition~\ref{def-of-atomic-product-Hardy-space} always
yields the same Hardy space $H^1_{L_1, L_2, at,N}({X_1\times
X_2})$, independent of the particular choice of $N>\max\{n_1,
n_2\}/4$.

The proof of Theorem~\ref{theorem of Hardy space atomic decom}
will be given in Section 3.

\smallskip We turn from the case of $p = 1$ to the Hardy spaces
$H^p_{L_1,L_2}(X_1\times X_2)$ associated to $L_1$ and~$L_2$,
for $1 < p < \infty$.

\begin{definition}{\label{def2.2}}
    Let $L_1$ and $L_2$ be two non-negative, self-adjoint
    operators acting on $L^2({X_1})$ and $L^2({X_1})$
    respectively, satisfying the Davies--Gaffney condition {\rm
    (DG)}.

    (i) For each $p$ with $1 < p \leq 2$, the \emph{Hardy space
    $H^p_{L_1,L_2}(X_1\times X_2)$ associated to $L_1$ and $L_2$} is
    the completion of the space $\Big\{ f\in H^2({X_1\times X_2}):
    \ Sf\in L^p({X_1\times X_2})\Big\}$ in the norm
    $$
     \|f\|_{H_{L_1,L_2}^p(X_1,X_2)} = \|Sf\|_{L^p(X_1,X_2)}.
    $$

    (ii) For each $p$ with $2 < p < \infty$, the \emph{Hardy space
    $H^p_{L_1,L_2}(X_1,X_2)$ associated to $L_1$ and $L_2$} is the
    completion of the space $D_{K_0, p}$ in the norm
    $$
    \|f\|_{H_{L_1,L_2}^p(X_1,X_2)}
    := \|S_{K_0}f\|_{L^p(X_1,X_2)}, \quad\text{with}\quad
    K_0
    := \max\Big\{\left[\,{n_1\over 4}\,\right],\left[\,{n_2\over 4}\,\right]\Big\} + 1,
    $$
    where
    \begin{align}\label{S function for H p}
        S_{K}f(x)
        := \Big(\int_{\Gamma(x)}
            |(t_1^2L_1)^{K}e^{-t_1^2L_1}
            \otimes(t_2^2L_2)^{K}e^{-t_2^2L_2} f(y)|^2
            {d\mu_1(y_1)\over V(x_1,t_1)}
            {dt_1\over t_1}
            {d\mu_2(y_2)\over V(x_2,t_2)}
            {dt_2\over t_2}\Big)^{1/2},
    \end{align}
    and
    \begin{eqnarray*}
        D_{K, p}
        := \Big\{ f\in H^2({X_1\times X_2}):
            \ S_{K}f\in L^p({X_1\times X_2})\Big\}.
    \end{eqnarray*}
\end{definition}


Next we develop the Calder\'on--Zygmund decomposition of the
Hardy spaces $H^p_{L_1,L_2}({X_1\times X_2})$, which is a
generalization of the result of Chang and Fefferman~\cite{CF2}.

\begin{theorem}\label{theorem C-Z decomposition for Hp}
    Fix $p$ with $1 < p < 2$. Take $\alpha > 0$ and
    $f\in H_{L_1,L_2}^p(X_1\times X_2)$. Then we may write $f =
    g + b$, where $g\in H_{L_1,L_2}^{2}( X_1\times X_2 )$ and
    $b\in H_{L_1,L_2}^{1}( X_1\times X_2 )$, such that
    \[
        \|g\|^{2}_{H_{L_1,L_2}^{2}( X_1\times X_2 )}
        \le C\alpha^{2-p}\|f\|^p_{H_{L_1,L_2}^p( X_1\times X_2 )}
    \]
    and
    \[
        \|b\|_{H_{L_1,L_2}^{1}( X_1\times X_2 )}
        \le C\alpha^{1-p}\|f\|^p_{H_{L_1,L_2}^p( X_1\times X_2 )}.
    \]
    Here $C$ is an absolute constant.
\end{theorem}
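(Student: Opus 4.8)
The plan is to realise the decomposition through the tent-space formalism combined with the Calderón reproducing formula. Write $\theta_{t_1,t_2}:=t_1^2L_1e^{-t_1^2L_1}\otimes t_2^2L_2e^{-t_2^2L_2}$ and $F(y_1,t_1,y_2,t_2):=\theta_{t_1,t_2}f(y_1,y_2)$, so that $Sf$ is exactly the product tent-space area function $\mathcal A(F)$ and $\|f\|_{H^p_{L_1,L_2}}=\|\mathcal A(F)\|_{L^p}$. Since $L_1,L_2$ are non-negative and self-adjoint, the $H_\infty$ functional calculus supplies a reproducing formula $f=c\iint\theta_{t_1,t_2}\big(\theta_{t_1,t_2}f\big)\,\tfrac{dt_1}{t_1}\tfrac{dt_2}{t_2}$ on $H^2(X_1\times X_2)$; I would in fact use the variant built from even functions with compactly supported Fourier transforms, so that the relevant operators have kernels supported in $\{d(x,y)\le t\}$ by Lemma~\ref{lemma finite speed} (that this yields the same space is part of the area-function/atomic theory). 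Let $\pi$ denote the synthesis operator sending $G(y,t)$ to $c\iint\theta_{t_1,t_2}G(\cdot,t_1,\cdot,t_2)\tfrac{dt_1}{t_1}\tfrac{dt_2}{t_2}$, so $\pi(F)=f$; then $\pi$ is bounded from the $T^2$ tent space to $L^2$, and, via Theorem~\ref{theorem of Hardy space atomic decom}, it maps $T^1$-atoms to multiples of $H^1_{L_1,L_2}$-atoms.

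First I set $\Omega:=\{x:Sf(x)>\alpha\}$, an open set with $\mu(\Omega)\le\alpha^{-p}\|f\|_{H^p_{L_1,L_2}}^p$ by Chebyshev. Because $\Omega$ is not a union of rectangles, I enlarge it to $\widetilde\Omega:=\{x:\mathcal M_s(\mathbf 1_\Omega)(x)>1/2\}$, where $\mathcal M_s$ is the strong (product) maximal function; then $\Omega\subset\widetilde\Omega$, $\mu(\widetilde\Omega)\le C\mu(\Omega)$, and $\widetilde\Omega$ is amenable to Journé's covering lemma. I define the product tent $\widehat{\widetilde\Omega}:=\{(y_1,t_1,y_2,t_2):\mu((B(y_1,t_1)\times B(y_2,t_2))\cap\widetilde\Omega^c)<\tfrac12 V(y_1,t_1)V(y_2,t_2)\}$, split $F=F\mathbf 1_{\widehat{\widetilde\Omega}}+F\mathbf 1_{(\widehat{\widetilde\Omega})^c}=:F_b+F_g$, and put $b:=\pi(F_b)$ and $g:=\pi(F_g)=f-b$.

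For the good part I would use the $L^2$-boundedness of $\pi$ and the equivalence $\|g\|_{H^2_{L_1,L_2}}\sim\|g\|_{L^2}$ to get $\|g\|_{H^2_{L_1,L_2}}^2\lesssim\|F_g\|_{T^2}^2\sim\iint_{(\widehat{\widetilde\Omega})^c}|F|^2\tfrac{d\mu_1\,dt_1\,d\mu_2\,dt_2}{t_1t_2}$. The tent is chosen precisely so that each point of $(\widehat{\widetilde\Omega})^c$ is seen, with density bounded below, by cone vertices lying in $\widetilde\Omega^c\subset\Omega^c$; a Fubini argument then yields $\iint_{(\widehat{\widetilde\Omega})^c}|F|^2\tfrac{d\mu\,dt}{t_1t_2}\lesssim\int_{\Omega^c}Sf^2\,d\mu=\int_{\{Sf\le\alpha\}}Sf^2\,d\mu\le\alpha^{2-p}\int Sf^p\,d\mu=\alpha^{2-p}\|f\|_{H^p_{L_1,L_2}}^p$, the required bound.

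The bad part is where the product structure makes the argument genuinely harder, and this is the step I expect to be the main obstacle. I would show that $b$, after normalisation by $\lambda:=C\alpha\,\mu(\widetilde\Omega)$, is a single $(H^1_{L_1,L_2},2,N)$-atom supported in $\widetilde\Omega$, whose internal decomposition $a=\sum_{R\in m(\widetilde\Omega)}a_R$ runs over the maximal dyadic subrectangles of $\widetilde\Omega$, with $a_R$ the contribution of $F_b$ over $R$. Condition (ii) of Definition~\ref{def H1 atom} follows from the finite-propagation-speed form of $\theta_{t_1,t_2}$, and $a_R=(L_1^N\otimes L_2^N)b_R$ is built into the reproducing formula. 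The crux is the normalisation (iii): one must bound $\sum_{R=I\times J}\ell(I)^{-4N}\ell(J)^{-4N}\|(\ell(I)^2L_1)^{\sigma_1}\otimes(\ell(J)^2L_2)^{\sigma_2}b_R\|_{L^2}^2$ by $\lambda^{-2}\mu(\widetilde\Omega)^{-1}$. For each rectangle this reduces, after absorbing the bounded factors $(\ell^2L)^{\sigma}e^{-\ell^2L}$, to the local tent integral $\iint_{\widehat{\widetilde\Omega}\cap(R\times\mathbb R_+^2)}|F|^2\tfrac{d\mu\,dt}{t_1t_2}$, which is controlled by $\alpha^2\mu(R)$ by the same ``seen from $\widetilde\Omega^c$'' principle used for $g$; summing over $R$ and invoking Journé's covering lemma to pass from $\sum_R\mu(R)$ to $C\mu(\widetilde\Omega)$ gives the atom estimate. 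The atomic decomposition theorem then yields $\|b\|_{H^1_{L_1,L_2}}\le\lambda\,\|a\|_{H^1_{L_1,L_2,at,N}}\le C\lambda=C\alpha\mu(\widetilde\Omega)\le C\alpha^{1-p}\|f\|_{H^p_{L_1,L_2}}^p$, completing the proof. The delicate points throughout are the geometry of the product tent and the use of Journé's lemma to prevent the sum over dyadic subrectangles from overcounting, issues with no one-parameter analogue.
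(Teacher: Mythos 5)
Your setup (synthesis operator $\pi$, tent-space formulation) and your good part are fine, and the good part is essentially the paper's own argument: the integral of $|F|^2$ over the complement of the tent of $\widetilde\Omega$ is controlled by $\int_{\{Sf\le\alpha\}}(Sf)^2\,d\mu$ because every such point has a shadow meeting $\widetilde\Omega^c\subset\{Sf\le\alpha\}$ with density at least $1/2$. The genuine gap is in the bad part, precisely at the step you yourself flag as the crux. You claim that the local tent integral $\iint_{\widehat{\widetilde\Omega}\cap(R\times\mathbb{R}_+^2)}|F|^2\,\frac{d\mu_1\,d\mu_2\,dt_1\,dt_2}{t_1t_2}$ is bounded by $C\alpha^2\mu(R)$ ``by the same seen-from-$\widetilde\Omega^c$ principle used for $g$.'' That principle cannot apply there: points \emph{inside} the tent $\widehat{\widetilde\Omega}$ are, by its very definition, those whose shadow lies mostly inside $\widetilde\Omega$, so they are \emph{not} seen with positive density from $\widetilde\Omega^c$, and on $\Omega=\{Sf>\alpha\}$ there is no upper bound on $Sf$ in terms of $\alpha$. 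Concretely, if $Sf\approx M\gg\alpha$ on a set of measure $\epsilon$ (with $M^p\epsilon$, hence $\|f\|_{H^p_{L_1,L_2}}$, kept bounded) and $Sf\le\alpha$ elsewhere, then $\iint_{\widehat{\widetilde\Omega}}|F|^2\gtrsim M^2\epsilon-C\alpha^{2-p}\|f\|^p_{H^p_{L_1,L_2}}\to\infty$, while your single-atom normalisation $\lambda=C\alpha\mu(\widetilde\Omega)$ would force $\|b\|_{L^2}\le C\alpha\mu(\widetilde\Omega)^{1/2}\lesssim\alpha\epsilon^{1/2}\to0$. Equivalently, $\|b\|_{L^2}\ge\|f\|_{L^2}-\|g\|_{L^2}$ blows up while the atom condition (iii) of Definition~\ref{def H1 atom} caps it; so $b$ is not a constant multiple of a single atom, and no choice of constants repairs a one-atom decomposition. (A secondary problem: even granting a local bound $\alpha^2\mu(R)$, the inequality $\sum_{R\in m(\widetilde\Omega)}\mu(R)\le C\mu(\widetilde\Omega)$ you attribute to Journ\'e is false; Journ\'e's Lemma~\ref{lemma-Journe} only holds with the damping factor $w(\ell(I)/\ell(\widehat I))$, and in the paper it is used to prove $\|Sa\|_{L^1}\le C$ for atoms, Lemma~\ref{leAtom}, not to normalise them.)

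The repair is exactly what the paper does, and it is structural rather than cosmetic: run the stopping-time over the whole dyadic ladder $\Omega_\ell:=\{Sf>2^\ell\alpha\}$, $\ell\ge0$, sorting dyadic rectangles into families $B_\ell$ according to the first level $\ell$ at which $\mu(R\cap\Omega_\ell)<\frac{1}{2A_0}\mu(R)$. The good function is the synthesis of $F$ over $\bigcup_{R\in B_0}T(R)$ (your $g$ estimate then goes through verbatim), while the bad function is an infinite \emph{sum} $b=\sum_{\ell\ge1}\lambda_\ell a_\ell$ with one atom per level, constructed exactly as in Proposition~\ref{prop-product H-SL subset H-at}. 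The point is that for $R\in B_\ell$ the tent integral \emph{is} controlled by the seen-from-outside argument, but applied at level $\ell$: such $R$ have most of their mass outside $\Omega_\ell$, where $Sf\le2^\ell\alpha$, giving $\sum_{R\in B_\ell}\int_{T(R)}|F|^2\,\frac{d\mu\,dt_1dt_2}{t_1t_2}\lesssim(2^\ell\alpha)^2\mu(\widetilde\Omega_{\ell-1})$ and hence $\lambda_\ell\lesssim\alpha2^\ell\mu(\Omega_{\ell-1})$; the $H^1$ bound then comes from summing the ladder, $\sum_{\ell\ge1}\lambda_\ell\lesssim\int_{\{Sf>\alpha\}}Sf\,d\mu\le\alpha^{1-p}\|f\|^p_{H^p_{L_1,L_2}}$, together with the uniform bound $\|a_\ell\|_{H^1_{L_1,L_2}}\le C$ from Lemma~\ref{leAtom}. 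Your single threshold $\alpha$ discards exactly the information (the growth of $Sf$ across levels $2^\ell\alpha$) that makes the $H^1$ estimate for $b$ possible.
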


As a consequence of the above Calder\'on--Zygmund
decomposition, we obtain the following interpolation result.

\begin{theorem}\label{theorem interpolation Hp}
    Suppose that $L_1$ and $L_2$ are non-negative self-adjoint
    operators such that the corresponding heat semigroups
    $e^{-tL_1}$ and $e^{-tL_2}$ satisfy the Davies--Gaffney
    estimates~\eqref{DaviesGaffney}. Let $T$ be a sublinear
    operator which is bounded on $L^2(X_1\times X_2)$ and
    bounded from $H_{L_1,L_2}^{1}( X_1\times X_2 )$ to $L^{1}(
    X_1\times X_2 )$. Then $T$ is bounded from $H_{L_1,L_2}^p(
    X_1\times X_2 )$ to $L^p( X_1\times X_2 )$ for all $1<p<2$.
\end{theorem}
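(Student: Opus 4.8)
The plan is to run a Marcinkiewicz-type (Fefferman--Stein) interpolation argument driven by the distribution function of $Tf$ and powered by the Calder\'on--Zygmund decomposition of Theorem~\ref{theorem C-Z decomposition for Hp}. Since $H^p_{L_1,L_2}(X_1\times X_2)$ is defined as a completion and $T$ is sublinear, it suffices to prove the a priori estimate $\|Tf\|_{L^p}\le C\|f\|_{H^p_{L_1,L_2}}$ for $f$ in the dense class $\{f\in H^2(X_1\times X_2): Sf\in L^p\}$ and then extend by density; I will work with such an $f$ throughout. First I would record the two endpoint mapping properties. By hypothesis $T$ is bounded from $H^1_{L_1,L_2}$ to $L^1$. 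For the $L^2$ endpoint, every $g\in H^2_{L_1,L_2}$ lies in $H^2(X_1\times X_2)\subset L^2$ and, by the lower area-function bound recorded before Definition~\ref{def of Hardy space via S function}, satisfies $\|g\|_{L^2}\le C_1^{-1}\|Sg\|_{L^2}=C_1^{-1}\|g\|_{H^2_{L_1,L_2}}$; combined with the $L^2$-boundedness of $T$ this gives $\|Tg\|_{L^2}\le C\|g\|_{H^2_{L_1,L_2}}$.

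The core of the argument is the layer-cake formula $\|Tf\|_{L^p}^p=p\int_0^\infty \alpha^{p-1}\mu(\{|Tf|>\alpha\})\,d\alpha$. For each fixed $\alpha>0$ I would apply Theorem~\ref{theorem C-Z decomposition for Hp} at height $\alpha$ to split $f=g^\alpha+b^\alpha$ with $g^\alpha\in H^2_{L_1,L_2}$ and $b^\alpha\in H^1_{L_1,L_2}$. Sublinearity gives $\{|Tf|>\alpha\}\subset\{|Tg^\alpha|>\alpha/2\}\cup\{|Tb^\alpha|>\alpha/2\}$, and Chebyshev's inequality together with the two endpoint bounds yields
\[
\mu(\{|Tf|>\alpha\})\le \frac{C}{\alpha^2}\|g^\alpha\|_{H^2_{L_1,L_2}}^2+\frac{C}{\alpha}\|b^\alpha\|_{H^1_{L_1,L_2}}.
\]
Multiplying by $\alpha^{p-1}$ and integrating then reduces everything to controlling $\int_0^\infty \alpha^{p-3}\|g^\alpha\|_{H^2}^2\,d\alpha$ and $\int_0^\infty \alpha^{p-2}\|b^\alpha\|_{H^1}\,d\alpha$ by a constant multiple of $\|Sf\|_{L^p}^p=\|f\|_{H^p_{L_1,L_2}}^p$.

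Here lies the main obstacle, and it is where one must look inside the construction of the decomposition rather than quote only its final normalized bounds. The global estimates $\|g^\alpha\|_{H^2}^2\le C\alpha^{2-p}\|f\|_{H^p}^p$ and $\|b^\alpha\|_{H^1}\le C\alpha^{1-p}\|f\|_{H^p}^p$ produce exactly the weak-type bound $\mu(\{|Tf|>\alpha\})\le C\alpha^{-p}\|f\|_{H^p}^p$, but inserting this into the layer-cake integral gives a logarithmically divergent $\int d\alpha/\alpha$, so the stated bounds alone yield only weak type. To close the argument one needs the sharper, $\alpha$-localized estimates that the construction actually produces, namely $\|g^\alpha\|_{H^2}^2\le C\big(\alpha^2\mu(\{Sf>\alpha\})+\int_{\{Sf\le\alpha\}}(Sf)^2\,d\mu\big)$ and $\|b^\alpha\|_{H^1}\le C\int_{\{Sf>\alpha\}}Sf\,d\mu$. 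The stated global bounds are recovered from these by Chebyshev's inequality and the pointwise estimate $Sf\le\alpha^{1-p}(Sf)^p$ valid on $\{Sf>\alpha\}$, so the localized forms are genuinely the stronger input.

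With these localized bounds in hand, Tonelli's theorem collapses the two $\alpha$-integrals. The good part contributes $\int(Sf)^2\int_{Sf}^\infty\alpha^{p-3}\,d\alpha\,d\mu=\frac{1}{2-p}\|Sf\|_{L^p}^p$ (using $p<2$), the bad part contributes $\int Sf\int_0^{Sf}\alpha^{p-2}\,d\alpha\,d\mu=\frac{1}{p-1}\|Sf\|_{L^p}^p$ (using $p>1$), and the remaining $\alpha^2\mu(\{Sf>\alpha\})$ term contributes $\frac{1}{p}\|Sf\|_{L^p}^p$. Summing gives $\|Tf\|_{L^p}^p\le C_p\|f\|_{H^p_{L_1,L_2}}^p$, and a density argument promotes this a priori estimate to all of $H^p_{L_1,L_2}$. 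The blow-up of the constants $1/(2-p)$ and $1/(p-1)$ as $p\to2$ and $p\to1$ is expected and harmless for fixed $p\in(1,2)$.
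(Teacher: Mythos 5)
Your proposal is correct and follows essentially the same route as the paper: a layer-cake integral over the threshold $\alpha$, the Calder\'on--Zygmund decomposition $f=g^\alpha+b^\alpha$ at each height, Chebyshev with the two endpoint bounds, and crucially the $\alpha$-localized estimates $\|g^\alpha\|_{H^2_{L_1,L_2}}^2\lesssim \int_{\{Sf\le\alpha\}}(Sf)^2\,d\mu$ (plus your harmless extra term) and $\|b^\alpha\|_{H^1_{L_1,L_2}}\lesssim\int_{\{Sf>\alpha\}}Sf\,d\mu$ extracted from inside the decomposition's proof, followed by Tonelli. You correctly identified the key subtlety that the theorem's stated global bounds alone give only a weak-type estimate, which is precisely why the paper invokes these internal localized estimates (``we have already proved the estimates\ldots'') rather than the normalized conclusions of Theorem~\ref{theorem C-Z decomposition for Hp}.
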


The proofs of Theorems~\ref{theorem C-Z decomposition for Hp}
and~\ref{theorem interpolation Hp} will be given in
Section~\ref{sec:CZ_decomposition_interpolation}.

Next, we establish the relationship between the Hardy spaces
$H^p_{L_1,L_2}({X_1\times X_2})$ and the Lebesgue spaces
$L^p({X_1\times X_2})$ for a certain range of~$p$.

First note that under the assumption of Gaussian upper
bounds~\eqref{Gaussian}, following the approaches used
in~\cite{HLMMY} in the one-parameter setting, we can obtain
that $H^p_{L_1,L_2}({X_1\times X_2})=L^p({X_1\times X_2})$ for
all $1<p<\infty$. Second, if one assumes only the
Davies--Gaffney estimates on the heat semigroups of $L_1$ and
$L_2$, then for $1 < p < \infty$ and $p\not= 2$,
$H^p_{L_1,L_2}({X_1\times X_2})$ may or may not coincide with
the space $L^p({X_1\times X_2})$. An example where the
classical Hardy space can be different from the Hardy space
associated to an operator $L$ is when $L$ is the elliptic
divergence form operator with complex, bounded measurable
coefficients on $\mathbb{ R}^n$; see \cite{HM}. However, it can
be verified by spectral analysis that $H^2_{L_1,L_2}({X_1\times
X_2}) = H^2({X_1\times X_2})$. Here the $L^2({X_1\times
X_2})$-adapted Hardy space $H^2({X_1\times X_2})$ is as defined
in~\eqref{eq2.H2} above.

\begin{theorem}\label{theorem-Hp-Lp}
    Suppose that $L_1$ and $L_2$ are non-negative self-adjoint
    operators on $L^2(X_1)$ and $L^2(X_2)$, respectively.
    Suppose that there exists some $p_0\in[1,2)$ such
    that $L_1$ and $L_2$ satisfy the generalized Gaussian
    estimates ${\rm (GGE_{p_0})}$.  Let $p'_0$
    satisfy $1/p_0 + 1/p'_0 = 1$.

    \begin{itemize}
    \item[(i)] We have $ H^p_{L_1,L_2}({X_1\times X_2}) =
        L^p({X_1\times X_2}) $ for all $p$ such that
        $p_0<p<p'_0$, with equivalent norms
        $\|\cdot\|_{H^p_{L_1,L_2}}$ and $\|\cdot\|_{L^p}$.

    \item[(ii)] Let $T$ be a sublinear operator which is
        bounded on $L^2(X_1\times X_2)$ and bounded from
        $H_{L_1,L_2}^{1}( X_1\times X_2 )$ to $L^{1}(
        X_1\times X_2 )$. Then $T$ is bounded on $L^p(
        X_1\times X_2 )$ for all $p$ such that
        $p_0<p<p'_0$.
    \end{itemize}
\end{theorem}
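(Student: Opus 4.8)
The plan is to reduce part~(i) to the two-sided square-function equivalence $\|Sf\|_{L^p}\sim\|f\|_{L^p}$ for $p_0<p\le2$, together with its higher-order analogue $\|S_{K_0}f\|_{L^p}\sim\|f\|_{L^p}$ for $2<p<p_0'$, and then to deduce part~(ii) from part~(i) and the interpolation Theorem~\ref{theorem interpolation Hp}. The natural anchor is $p=2$. Since $(\mathrm{GGE}_{p_0})$ holds with $p_0<2$, Theorem~\ref{theorem injective} gives that $L_1$ and $L_2$ are injective, whence $N(L_1\otimes L_2)=\{0\}$ and $H^2(X_1\times X_2)=\overline{R(L_1\otimes L_2)}=L^2(X_1\times X_2)$. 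Combined with the $L^2$-equivalence $\|Sf\|_2\sim\|f\|_2$ recorded after~\eqref{esf}, this already yields $H^2_{L_1,L_2}(X_1\times X_2)=L^2(X_1\times X_2)$.

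For the upper bound $\|Sf\|_{L^p}\lesssim\|f\|_{L^p}$ in the range $p_0<p\le2$, I would exploit the tensor structure of the area function. Writing $A_{t_1}:=t_1^2L_1e^{-t_1^2L_1}$ and $B_{t_2}:=t_2^2L_2e^{-t_2^2L_2}$, the integrand of $Sf$ factors as $A_{t_1}\otimes B_{t_2}$, so $S$ is the composition of the two one-parameter conical square functions acting separately in the $X_1$ and $X_2$ variables. The key input is the one-parameter result (in the spirit of Kunstmann--Uhl~\cite{U}) that, under $(\mathrm{GGE}_{p_0})$, the conical square function of a single operator is bounded on $L^p$ for $p_0<p<p_0'$; by the Remark following~\eqref{generalGE} the hypothesis $(\mathrm{GGE}_{p_0})$ propagates to $(\mathrm{GGE}_{q})$ for all $p_0<q\le2$, supplying the off-diagonal $L^{p_0}\to L^{p_0'}$ bounds that replace the missing pointwise heat-kernel estimates. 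Applying this estimate first in $X_1$ with values in the $L^2$-space of the $X_2$-cone, and then in $X_2$, gives the product bound by Fubini. The reverse inequality $\|f\|_{L^p}\lesssim\|Sf\|_{L^p}$ follows by duality: using the Calder\'on reproducing formula on $H^2=L^2$ (available by injectivity of $L_1,L_2$, since $\int_0^\infty(t^2Le^{-t^2L})^2\,dt/t$ is a nonzero constant multiple of the identity on the range) to write $f$ as a double integral of $(A_{t_1}\otimes B_{t_2})f$, pairing with $g\in L^{p'}\cap L^2$, and applying Cauchy--Schwarz in the cone variables together with the tent-space duality argument of Coifman--Meyer--Stein reduces the pairing to $\|Sf\|_{L^p}\|Sg\|_{L^{p'}}$. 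Since $p_0<p'<p_0'$, the upper bound already proved gives $\|Sg\|_{L^{p'}}\lesssim\|g\|_{L^{p'}}$, and taking the supremum over $g$ closes this range.

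The range $2<p<p_0'$ is then handled by the same scheme applied to the higher-order square function $S_{K_0}$ of~\eqref{S function for H p}, or equivalently by dualising the estimates just obtained: for such $p$ one has $p_0<p'<2$, and the duality between the tent spaces $T^{p}$ and $T^{p'}$ converts the two-sided equivalence in the conjugate range into $\|S_{K_0}f\|_{L^p}\sim\|f\|_{L^p}$. The extra power $K_0=\max\{[n_1/4],[n_2/4]\}+1$ provides exactly the decay needed for the off-diagonal estimates to close in the larger exponent. Collecting the three ranges gives $H^p_{L_1,L_2}(X_1\times X_2)=L^p(X_1\times X_2)$ with equivalent norms for all $p_0<p<p_0'$, proving~(i).

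For part~(ii), fix $p$ with $p_0<p\le2$. By Theorem~\ref{theorem interpolation Hp}, $T$ is bounded from $H^p_{L_1,L_2}(X_1\times X_2)$ to $L^p(X_1\times X_2)$, and by~(i) the domain equals $L^p(X_1\times X_2)$, so $T$ is bounded on $L^p$ throughout $p_0<p\le2$. For $2<p<p_0'$ one passes to the conjugate exponent $p'\in(p_0,2)$ and argues by duality, using that $H^{p'}_{L_1,L_2}=L^{p'}$ and $H^{p}_{L_1,L_2}=L^{p}$ are now both identified with the corresponding Lebesgue spaces. I expect the main obstacle of the whole theorem to lie in part~(i), specifically in proving the \emph{biparameter} square-function bounds under the mere generalised Gaussian assumption: because pointwise heat-kernel bounds are unavailable, one must run the one-parameter argument in its Hilbert-space--valued form and verify that the $L^{p_0}\to L^{p_0'}$ off-diagonal estimates survive the iteration across the two factors, where the ordinary scalar Calder\'on--Zygmund theory used in the Gaussian case does not apply.
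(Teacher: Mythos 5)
Your proposal follows essentially the same route as the paper's own proof: anchor at $p=2$ via Theorem~\ref{theorem injective} (so $N(L_1\otimes L_2)=\{0\}$ and $H^2=L^2$), prove the bound $\|Sf\|_{L^p}\lesssim\|f\|_{L^p}$ by running the Kunstmann--Uhl one-parameter conical square function estimate in Hilbert-space--valued form and iterating over the two factors (this is exactly the paper's Theorem~\ref{theorem vector area function}, applied with $H=H_{x_2}$ the $L^2$-space of the $X_2$-cone, the generalized Gaussian estimates being transferred to the vector-valued semigroup by Minkowski's inequality), obtain the reverse inequality by the reproducing formula and duality using the conjugate-range upper bound, and deduce (ii) from (i) together with Theorem~\ref{theorem interpolation Hp}. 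The treatment of the range $2<p<p_0'$ and of part (ii) is equally terse in your write-up and in the paper, so there is no substantive difference between the two arguments.
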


The proof of Theorem~\ref{theorem-Hp-Lp} will be given in
Section 5.
\bigskip

\section{Characterization of the Hardy space $H^1_{L_1,L_2}({X_1\times X_2})$
in terms of atoms}
\setcounter{equation}{0} \label{sec:atomic_decomposition}

The goal of this section is to provide the proof of
Theorem~\ref{theorem of Hardy space atomic decom}.

Our strategy is as follows: by density, it is enough to show
that when $N > \max\{n_1,n_2\}/4$, we have
\[
    \mathbb{H}^1_{L_1,L_2,at,N}({X_1\times X_2})
    = H^1_{L_1,L_2 }({X_1\times X_2})\cap L^2({X_1\times X_2})
\]
with equivalent norms. The proof of this fact proceeds in two
steps.

\medskip

\noindent {\bf Step 1.} \
$\mathbb{H}^1_{L_1,L_2,at,N}({X_1\times X_2})\subseteq
H^1_{L_1,L_2}({X_1\times X_2})\cap L^2({X_1\times X_2})$, for
$N > \max\{n_1,n_2\}/4$. This step relies on the fact that the
area function $S$ is bounded on $L^2({X_1\times X_2})$ and that
$\|Sa\|_{L^1({X_1\times X_2})}$ is uniformly bounded for every
atom~$a$.

\medskip

\noindent {\bf Step 2.} \ $ H^1_{L_1,L_2}({X_1\times X_2})\cap
L^2({X_1\times X_2})\subseteq
\mathbb{H}^1_{L_1,L_2,at,N}({X_1\times X_2})$, for all
$N\in{\mathbb N}$. In the proof of this step we use the tent
space approach to construct the atoms in the Hardy spaces
associated to operators in the product setting.

\medskip

We take these in order.

\begin{proof}[Proof of Step 1]  The conclusion of Step 1 is an immediate
consequence of the following pair of Lemmata.

\begin{lemma}\label{lemma-of-T-bd-on product Hp}
    Fix $N\in{\mathbb N}$. Assume that $T$ is a linear
    operator, or a non-negative sublinear operator, satisfying the
    weak-type~\textup{(2,2)} bound
    \begin{eqnarray*}
    \big|\{x\in  {X_1\times X_2} : |Tf(x)|>\eta\}\big|
    \leq C\eta^{-2}\|f\|_{L^2( {X_1\times X_2} )}^2,\
    \ \text{for all}~\eta > 0,
    \end{eqnarray*}
    and that for every $(H^1_{L_1, L_2}, 2, N)$-atom $a$, we have
    \begin{eqnarray}
    \|Ta\|_{L^1( {X_1\times X_2} )}\leq C
    \end{eqnarray}
    with constant $C$ independent of $a$. Then $T$ is bounded from
    $\mathbb{H}^1_{L_1,L_2,at,N}( {X_1\times X_2} )$ to $L^1(
    {X_1\times X_2} )$, and
    \[
        \|Tf\|_{L^1( {X_1\times X_2} )}
        \leq C\|f\|_{\mathbb{H}^1_{L_1,L_2,at,N}(X)}.
    \]
    Therefore, by density, $T$ extends to a bounded
    operator from
    $H^1_{L_1,L_2,at,N}( {X_1\times X_2} )$ to
    $L^1( {X_1\times X_2} )$.
\end{lemma}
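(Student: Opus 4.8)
The plan is to reduce the bound for a general element of the atomic space to the uniform estimate on atoms, using the weak-type $(2,2)$ hypothesis as the tool that lets us pass through the infinite sum. Fix $f\in\mathbb{H}^1_{L_1,L_2,at,N}(X_1\times X_2)$ together with an atomic representation $f=\sum_{j=0}^\infty\lambda_j a_j$, where $\{\lambda_j\}\in\ell^1$, each $a_j$ is an $(H^1_{L_1,L_2},2,N)$-atom, and the series converges in $L^2(X_1\times X_2)$. Write $f_M:=\sum_{j=0}^M\lambda_j a_j$ for the partial sums, so that $f_M\to f$ in $L^2(X_1\times X_2)$.

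First I would dispose of the finite partial sums. Since $T$ is linear or non-negative sublinear, for each fixed $M$ we have the pointwise bound $|Tf_M|\le\sum_{j=0}^M|\lambda_j|\,|Ta_j|$, and hence, by the assumed uniform estimate $\|Ta_j\|_{L^1(X_1\times X_2)}\le C$,
\[
    \|Tf_M\|_{L^1(X_1\times X_2)}
    \le\sum_{j=0}^M|\lambda_j|\,\|Ta_j\|_{L^1(X_1\times X_2)}
    \le C\sum_{j=0}^\infty|\lambda_j|.
\]
The main step is then to pass from $f_M$ to $f$. In either of the two admissible cases (linear, or non-negative sublinear with $|T(-g)|=|Tg|$) one has the pointwise inequality $|Tf-Tf_M|\le|T(f-f_M)|$. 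Because $f-f_M\to0$ in $L^2$, the weak-type $(2,2)$ bound gives, for every $\eta>0$,
\[
    \big|\{x:|Tf(x)-Tf_M(x)|>\eta\}\big|
    \le\big|\{x:|T(f-f_M)(x)|>\eta\}\big|
    \le C\eta^{-2}\|f-f_M\|_{L^2(X_1\times X_2)}^2\longrightarrow0
\]
as $M\to\infty$; that is, $Tf_M\to Tf$ in measure. Choosing a subsequence $M_k$ along which $Tf_{M_k}\to Tf$ almost everywhere, Fatou's lemma together with the finite-sum estimate yields
\[
    \|Tf\|_{L^1(X_1\times X_2)}
    \le\liminf_{k\to\infty}\|Tf_{M_k}\|_{L^1(X_1\times X_2)}
    \le C\sum_{j=0}^\infty|\lambda_j|.
\]

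Finally, taking the infimum over all atomic representations of $f$ produces $\|Tf\|_{L^1(X_1\times X_2)}\le C\|f\|_{\mathbb{H}^1_{L_1,L_2,at,N}(X_1\times X_2)}$, which is the claimed boundedness from $\mathbb{H}^1_{L_1,L_2,at,N}$ to $L^1$. Since $\mathbb{H}^1_{L_1,L_2,at,N}(X_1\times X_2)$ is by definition dense in its completion $H^1_{L_1,L_2,at,N}(X_1\times X_2)$, the operator $T$ extends uniquely to a bounded operator on the completion, with the same norm bound. The one genuine obstacle — the only place beyond routine bookkeeping — is the interchange of $T$ with the infinite series for a merely sublinear $T$, where naive subadditivity is available only for finite sums; this is exactly what the weak-type $(2,2)$ hypothesis supplies, letting the convergence-in-measure argument and Fatou's lemma carry the finite estimate over to the full sum.
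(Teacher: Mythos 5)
Your proof is correct and is essentially the argument the paper invokes: the paper proves this lemma by citing the one-parameter version (Lemma~4.3 of \cite{HLMMY}), whose proof is exactly your scheme of bounding the finite partial sums via the uniform atom estimate, using the weak-type $(2,2)$ bound to get convergence in measure of $Tf_M$ to $Tf$, and then passing to an a.e.\ convergent subsequence and applying Fatou's lemma. You have simply written out the details that the paper omits by reference, so there is nothing to correct.
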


The proof of Lemma~3.1 follows directly from that of the
one-parameter version: Lemma~4.3 in \cite{HLMMY}. The proof
given there is independent of the number of parameters. We omit
the details here.

\begin{lemma}\label{leAtom}
    Let $a$ be an $(H^1_{L_1,L_2}, 2, N)$-atom with
    $N > \max\{n_1,n_2\}/4$.  Let $S$ denote the area
    function defined in~\eqref{esf}. Then
    \begin{eqnarray}\label{e4.11}
        \|Sa\|_{1}\leq C,
    \end{eqnarray}
    where $C$ is a positive constant independent of $a$.
\end{lemma}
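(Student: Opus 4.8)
The plan is to bound $\|Sa\|_1=\int_{X_1\times X_2}Sa\,d\mu$ by splitting the base space into a region adapted to $\Omega$ and its complement. Fix an atom $a$ with $\supp a\subset\Omega$ and $a=\sum_{R\in m(\Omega)}a_R$, where each $R=I\times J$ and $\supp a_R\subset\overline C R$. Let $M_s$ denote the strong (product) maximal operator and set $\widetilde\Omega:=\{x\in X_1\times X_2: M_s(\mathbf 1_\Omega)(x)>\delta_0\}$ for a small fixed threshold $\delta_0$. Since $M_s$ is bounded on $L^2$, one has $\mu(\widetilde\Omega)\le C\mu(\Omega)$, and $\widetilde\Omega$ is chosen large enough that any $x=(x_1,x_2)$ with $x_1$ close to $I$ and $x_2$ close to $J$ lies in $\widetilde\Omega$, for every $R\in m(\Omega)$. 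I then write $\|Sa\|_1=\int_{\widetilde\Omega}Sa+\int_{(\widetilde\Omega)^c}Sa$.

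For the local term I would use the $L^2$-boundedness of $S$, the Cauchy--Schwarz inequality, and the normalization $\|a\|_2\le\mu(\Omega)^{-1/2}$ from condition (iii):
\[
\int_{\widetilde\Omega}Sa\,d\mu\le\mu(\widetilde\Omega)^{1/2}\|Sa\|_2\le C\mu(\Omega)^{1/2}\|a\|_2\le C.
\]

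For the global term I first apply Minkowski's inequality in the $L^2(\Gamma(x))$-norm to obtain the pointwise bound $Sa(x)\le\sum_R Sa_R(x)$. For a fixed $R=I\times J$ and $x\in(\widetilde\Omega)^c$, I decompose $(\widetilde\Omega)^c$ into the three regions where $x_1$ is far from $I$ and $x_2$ near $J$, where $x_1$ is near $I$ and $x_2$ far from $J$, and where both are far; by the choice of $\widetilde\Omega$ these exhaust $(\widetilde\Omega)^c$. In each far variable I decompose into dyadic annuli around $I$ (resp.\ $J$), and extract decay from two complementary regimes of the scale parameter: when $t_i\lesssim d_i(x_i,I)$ the Davies--Gaffney $L^2$ off-diagonal estimates for $(t_i^2L_i)e^{-t_i^2L_i}$ (valid under \eqref{DaviesGaffney}, with arbitrary polynomial decay) give the smallness; when $t_i\gtrsim d_i(x_i,I)$ I use the cancellation $a_R=(L_1^N\otimes L_2^N)b_R$, rewriting $t_i^2L_ie^{-t_i^2L_i}a_R=t_i^{-2N}(t_i^2L_i)^{N+1}e^{-t_i^2L_i}(\cdots)$ and bounding $t_i^{-2N}\lesssim d_i(x_i,I)^{-2N}$. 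In each near variable I extract no decay but instead invoke the $L^2$-boundedness of the one-parameter area function together with Cauchy--Schwarz over the near region. Using the support condition (ii) and the $L^2$ bounds in (iii) for the appropriate $\sigma_1,\sigma_2\in\{0,\dots,N\}$, the $k$-th annulus contributes a factor $2^{-k(2N-n_i/2)}$; summing over $k$ converges precisely because $N>\max\{n_1,n_2\}/4$.

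This produces, for each $R$, an estimate of the form $\int_{(\widetilde\Omega)^c}Sa_R\lesssim\mu(R)^{1/2}\,\omega_R^{-\varepsilon}\,w_R$, where $w_R=\ell(I)^{-2N}\ell(J)^{-2N}\|(\ell(I)^2L_1)^{\sigma_1}\otimes(\ell(J)^2L_2)^{\sigma_2}b_R\|_2$, and $\omega_R\ge 1$ measures how many dyadic dilations of $R$ are needed before leaving $\widetilde\Omega$. Summing over $R$ by Cauchy--Schwarz, the factor $(\sum_R w_R^2)^{1/2}\le\mu(\Omega)^{-1/2}$ by condition (iii), while $\sum_R\mu(R)\,\omega_R^{-2\varepsilon}\lesssim\mu(\widetilde\Omega)\lesssim\mu(\Omega)$ follows from Journé's covering lemma, giving $\int_{(\widetilde\Omega)^c}Sa\lesssim\mu(\Omega)^{1/2}\mu(\Omega)^{-1/2}=C$. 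The main obstacle is exactly this global estimate: the bookkeeping in the mixed regions, where the Gaffney decay, the $t_i^{-2N}$ cancellation gain, and the one-parameter square-function bound must be placed in the correct tensor slots, and above all the summation over the \emph{maximal} dyadic rectangles, whose dilates need not have bounded overlap and whose total measure is not controlled by $\mu(\Omega)$ --- so Journé's covering lemma, the product-space substitute for the disjointness available in one parameter, is indispensable. A secondary technical point is establishing the off-diagonal bounds for the functional-calculus operators $(t^2L)^{k}e^{-t^2L}$ under only \eqref{DaviesGaffney} rather than pointwise Gaussian bounds, which forces every spatial localization to be carried out at the level of $L^2$ operator norms.
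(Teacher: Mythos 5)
Your proposal is correct and follows essentially the same route as the paper's proof: a local/global split using a strong-maximal-function enlargement of $\Omega$ (local part by Cauchy--Schwarz, $L^2$-boundedness of $S$, and the normalization in (iii); global part by annuli decomposition, Davies--Gaffney decay for small $t_i$, the cancellation $a_R=(L_1^N\otimes L_2^N)b_R$ with the $t_i^{-2N}$ gain for large $t_i$, and the one-parameter area-function $L^2$ bound in the near variable), concluded by Cauchy--Schwarz and Journ\'e's covering lemma applied to the dilation ratios $\ell(I)/\ell(I^*)$, $\ell(J)/\ell(J^*)$. The only differences are bookkeeping: the paper works on $(\bigcup_R R^*)^c$ with a two-step enlargement ($I^*$ relative to $\widetilde\Omega$, then $J^*$ relative to $\widetilde{\widetilde\Omega}$) and splits each rectangle's contribution into the terms $D^{(a)}$, $D^{(b)}$, $E$ with the $t_i$-integrals cut into three ranges, whereas you integrate over $(\widetilde\Omega)^c$ directly and must read your single factor $\omega_R^{-\varepsilon}$ per direction, exactly as your three-region decomposition produces it.
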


Given Lemma~\ref{leAtom}, we may apply
Lemma~\ref{lemma-of-T-bd-on product Hp} with $T=S$ to obtain
\[
    \|f\|_{H^1_{L_1, L_2}( {X_1\times X_2} )}
    :=\|Sf\|_{L^1({X_1\times X_2})}
    \leq C\|f\|_{\mathbb{H}^1_{L_1,L_2,at,N}({X_1\times X_2} )},
\]
from which Step 1 follows.

To finish Step 1, it therefore suffices to verify estimate
(\ref{e4.11}) in Lemma~\ref{leAtom}. To do so, we apply
Journ\'e's covering lemma.

We recall from~\cite{HLL} the formulation of Journ\'e's Lemma
\cite{J,P} in the setting of spaces of homogeneous type. Let
$(X_i,d_i,\mu_i)$, $i = 1$, 2, be spaces of homogeneous type
and let $\{I_{\alpha_i}^{k_i}\subset X_i\}$, $i = 1$, 2,  be
open cubes as in Lemma~\ref{lemma-dyadic-cubes}. Let $\mu =
\mu_1\times\mu_2$ denote the product measure on~$X_1\times
X_2$. The open set $I_{\alpha_1}^{k_1}\times
I_{\alpha_2}^{k_2}$ for $k_1$, $k_2\in \mathbb{Z}$,
$\alpha_1\in I_{k_1}$ and $\alpha_2\in I_{k_2}$, is called a
\emph{dyadic rectangle in $X_1\times X_2$}. Let $\Omega\subset
X_1\times X_2$ be an open set of finite measure. Denote by
$m(\Omega)$ the maximal dyadic rectangles contained in
$\Omega$, and by $m_{i}(\Omega)$ the family of dyadic
rectangles $R\subset\Omega$ which are maximal in the
$x_i$-direction, for $i = 1$, $2$.

In what follows, we let $R = I\times J$ denote any dyadic
rectangle in $X_1\times X_2$. Given $R = I\times J\in
m_1(\Omega)$, let $\widehat{J}$ be the largest dyadic cube
containing $J$ such that
\[
    \mu\big((I\times \widehat{J}) \cap \Omega\big)
    > \frac{1}{2}\,\mu(I\times \widehat{J}).
\]

Similarly, given $R = I\times J\in m_2(\Omega)$, let
$\widehat{I}$ be the largest dyadic cube containing $I$ such
that
\[
    \mu\big((\widehat{I}\times J) \cap \Omega\big)
    > \frac{1}{2} \, \mu(\widehat{I}\times J).
\]
Also, let $w(x)$ be any increasing function such that
$\sum_{j=0}^\infty jw(c2^{-j})<\infty$, where $c$ is a fixed
positive constant. In particular, we may take $w(x) = x^\delta$
for any $\delta > 0$.

\begin{lemma}[\cite{HLL}]\label{lemma-Journe}
    Let $\Omega\subset X_1\times X_2$ be an open set
    with finite measure. Then
    \begin{eqnarray}\label{J2}
        \sum_{R = I\times J\in m_1(\Omega)}
            \mu(R) w\Big({\ell(J)\over\ell(\widehat{J})}\Big)
        \leq C\mu(\Omega)
        \end{eqnarray}
    and
    \begin{eqnarray}\label{J1}
        \sum_{R = I\times J\in m_2(\Omega)}
            \mu(R) w\Big({\ell(I)\over\ell(\widehat{I})}\Big)
        \leq C\mu(\Omega),
    \end{eqnarray}
    for some constant $C$ independent of~$\Omega$.
\end{lemma}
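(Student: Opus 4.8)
The plan is to reduce the estimate to a combinatorial counting statement governed by the strong maximal function, and then to sum against the weight $w$. By the symmetry of the roles of $X_1$ and $X_2$ it suffices to prove~\eqref{J2}; the estimate~\eqref{J1} follows by interchanging the two factors. First I would introduce the strong maximal function $M_s := M^{(1)}\circ M^{(2)}$ on $X_1\times X_2$, where $M^{(i)}$ denotes the Hardy--Littlewood maximal operator built from the Christ cubes of $(X_i,d_i,\mu_i)$. Since each $\mu_i$ is doubling, each $M^{(i)}$ is bounded on $L^2(X_i)$, and applying the two in succession (Fubini in the $x_1$ and $x_2$ variables) shows that $M_s$ is bounded on $L^2(X_1\times X_2)$, hence of weak type $(2,2)$. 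Applying this with $\chi_\Omega$ and setting
\[
 \widetilde\Omega := \Big\{x\in X_1\times X_2 :\ M_s\chi_\Omega(x) > \tfrac12\Big\},
\]
I get $\mu(\widetilde\Omega)\le C\,\mu(\Omega)$. The role of $\widetilde\Omega$ is that whenever $R = I\times J\in m_1(\Omega)$, the defining half-density property of $\widehat J$ gives $\mu\big((I\times\widehat J)\cap\Omega\big) > \tfrac12\,\mu(I\times\widehat J)$, so that the average of $\chi_\Omega$ over the rectangle $I\times\widehat J$ exceeds $\tfrac12$; since $M_s$ dominates this rectangular average, every point of $I\times\widehat J$ lies in $\widetilde\Omega$, i.e. $I\times\widehat J\subseteq\widetilde\Omega$.

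Next I would organise the left side of~\eqref{J2} by the dyadic generation gap $m = m(R)\in\{0,1,2,\dots\}$ defined by $\ell(\widehat J) = 2^{m}\ell(J)$, so that $\ell(J)/\ell(\widehat J) = 2^{-m}$ and the weight equals $w(2^{-m})$. The conclusion~\eqref{J2} then reduces to the counting estimate
\[
 \sum_{\substack{R = I\times J\in m_1(\Omega)\\ m(R) = m}} \mu(R)
 \ \le\ C\,(1+m)\,\mu(\Omega),
 \qquad m = 0,1,2,\dots,
\]
since summing this against $w(2^{-m})$ gives
\[
 \sum_{R = I\times J\in m_1(\Omega)} \mu(R)\,w\Big(\tfrac{\ell(J)}{\ell(\widehat J)}\Big)
 \ \le\ C\,\mu(\Omega)\sum_{m=0}^{\infty}(1+m)\,w(2^{-m}),
\]
and the series on the right converges by hypothesis. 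This explains why the $j$-weighted summability $\sum_{j} j\,w(c\,2^{-j}) < \infty$ is exactly the assumption needed, rather than mere summability of $w(c2^{-j})$.

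The heart of the argument, and the step I expect to be the main obstacle, is this counting estimate. The idea is to compare each rectangle $R = I\times J$ of gap $m$ with its enlargement $I\times\widehat J\subseteq\widetilde\Omega$ and to bound the overlap of these enlargements. I would pass through the tower of intermediate dyadic cubes $J = J_0\subset J_1\subset\cdots\subset J_m = \widehat J$: the maximality of $\widehat J$ (its dyadic parent violates the half-density condition) together with the maximality of $I$ in the $x_1$-direction forces a bounded-overlap property for the enlarged rectangles, yielding $\sum_{m(R)=m}\mu(I\times\widehat J)\le C\,\mu(\widetilde\Omega)\le C\,\mu(\Omega)$, while the factor $(1+m)$ records that a given point can be charged at each of the at most $m+1$ intermediate scales. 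Passing back from $\mu(I\times\widehat J)$ to $\mu(R) = \mu(I\times J)$ costs only a constant and uses the doubling of $\mu_2$. The genuinely delicate points, absent from the Euclidean case of~\cite{CF2} and~\cite{P}, are that the Christ cubes of Lemma~\ref{lemma-dyadic-cubes} are not exact products of dyadic intervals and that their measures are comparable to those of their inscribed balls only up to the doubling constants; hence the overlap counting and the scale-by-scale density estimates must be carried out with the quasi-metric and with properties (ii)--(v) of Lemma~\ref{lemma-dyadic-cubes} in place of exact Euclidean scaling. Once the counting estimate is established, assembling the two displays above completes the proof.
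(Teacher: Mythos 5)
You should first be aware that the paper does not prove this lemma at all: it is quoted verbatim from \cite{HLL} (Han--Li--Lin), so the only meaningful comparison is with the Journ\'e--Pipher style argument carried out there. Your outer scaffolding is consistent with that argument: the strong maximal function, the set $\widetilde\Omega=\{M_s\chi_\Omega>1/2\}$ with $\mu(\widetilde\Omega)\le C\mu(\Omega)$, the observation that $I\times\widehat J\subseteq\widetilde\Omega$, the grouping by the gap $m$ with $\ell(\widehat J)=2^m\ell(J)$, and the reduction to a per-gap counting estimate $\sum_{m(R)=m}\mu(R)\le C(1+m)\mu(\Omega)$, which indeed explains why the hypothesis is $\sum_j j\,w(c2^{-j})<\infty$ rather than mere summability.

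The genuine gap is that the counting estimate --- which is the entire content of Journ\'e's lemma --- is not proved, and the mechanism you propose for it is false. Take $\Omega=I_0\times J_0$, a single dyadic rectangle (this is legitimate even in the Euclidean special case). Then $m_1(\Omega)=\{I_0\times J:\ J\subseteq J_0 \text{ dyadic}\}$, and for every such $R=I_0\times J$ one checks $\widehat J=J_0$ (the parent of $J_0$ has density exactly $1/2$). Hence the gap-$m$ class consists of the $\sim 2^m$ rectangles $I_0\times J$ with $\ell(J)=2^{-m}\ell(J_0)$, and \emph{all of them have the same enlargement} $I_0\times J_0$. Consequently $\sum_{m(R)=m}\mu(I\times\widehat J)\approx 2^m\mu(\Omega)$: there is no bounded-overlap property for the enlarged rectangles, and your claimed bound $\sum_{m(R)=m}\mu(I\times\widehat J)\le C\mu(\widetilde\Omega)$ fails by an exponential factor. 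Likewise, ``passing back from $\mu(I\times\widehat J)$ to $\mu(R)$ costs only a constant by doubling'' is false: the ratio is $\mu_2(J)/\mu_2(\widehat J)\approx 2^{-m}$, and doubling controls one dyadic generation, not $m$ generations. In this example the two errors cancel ($2^m\cdot 2^{-m}=1$), which is precisely why the sketch looks plausible, but as a proof it is circular. A further (smaller) flaw: under the paper's definition of $\widehat J$ the density condition is not monotone in the cube, so your ``tower'' $J=J_0\subset J_1\subset\cdots\subset J_m=\widehat J$ need not consist of cubes satisfying the half-density property, and the heuristic ``each point is charged at most $m+1$ times'' is not an argument. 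The actual proof in \cite{HLL} (following Journ\'e and Pipher, with extra care for Christ cubes) extracts the packing not from overlap of enlargements but from the conjunction of (a) maximality of $I$ in the $x_1$-direction, which makes the cubes $I$ attached to a fixed $J$ pairwise disjoint, and (b) the \emph{failure} of the half-density condition for $I\times(\widehat J)'$, the parent of the enlargement; your counting argument never actually uses (b), and that is the missing idea.
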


\medskip

\begin{proof}[Proof of Lemma \ref{leAtom}]
Given an $(H_{L_1,L_2}^1,2,N)$-atom $a$, let $\Omega$ be an
open set of finite measure in $X_1\times X_2$ as in
Definition~\ref{def H1 atom} such that $ a = \sum_{R\in
m(\Omega)} a_R $ is supported in~$\Omega$.

For each rectangle $R = I \times J\subset\Omega$, let $ I^* $
be the largest dyadic cube in $X_1$ containing $I$ such that $
I^* \times J \subset \widetilde{\Omega}$, where
$\widetilde{\Omega} := \{x\in X_1\times X_2 :\
M_s(\chi_{\Omega})(x) > 1/2\}$ and $M_s$ denotes the strong
maximal function. Next, let $ J^* $ be the largest dyadic cube
in $X_2$ containing $J$ such that $ I^* \times J^* \subset
\widetilde{\widetilde{\Omega}}$, where
$\widetilde{\widetilde{\Omega}} := \{x\in X_1\times X_2 :\
M_s(\chi_{\widetilde{\Omega}})(x) > 1/2\}$.

Now let $R^*$ be the 100-fold dilate of $ I^* \times  J^* $
concentric with $ I^* \times  J^* $. That is, $R^* =  100I^*
\times 100J^*$ is the product of the balls $100I^*$ and
$100J^*$ centered at the centers of $I^*$ and~$J^*$
respectively, as defined in
Section~\ref{sec:assumptions_main_results}. An application of
the strong maximal function theorem shows that
$\mu\big(\cup_{R\subset\Omega} R^*\big)\leq
C\mu(\widetilde{\widetilde{\Omega}})\leq
C\mu(\widetilde{\Omega})\leq C\mu(\Omega)$.

Then we write
\[
    \|Sa\|_{L^1({X_1\times X_2})}
    = \|Sa\|_{L^1(\cup R^*)} + \|Sa\|_{L^1((\cup R^*)^c)}.
\]
Thus, by H\"older's inequality and the property (iii) of the
$(H_{L_1,L_2,}^1,2,N)$-atom, we see that the first term
on the right-hand side is bounded by
\begin{eqnarray*}
    \|Sa\|_{L^1(\cup R^*)}
    \leq \mu(\cup R^*)^{1/2} \|Sa\|_{L^2( {X_1\times X_2} )}
    \leq  C \mu(\Omega)^{1/2}\|a\|_{L^2( {X_1\times X_2} )}
    \leq  C.
\end{eqnarray*}

Now it suffices to prove that
\begin{eqnarray}\label{SL alpha uniformly bd on outside of Omega}
    \int_{(\bigcup R^*)^c}|Sa(x_1,x_2)|\,d\mu_1(x_1)\,d\mu_2(x_2)
    \leq C.
\end{eqnarray}

From the definition of $a$, we see that the left-hand side of
(\ref{SL alpha uniformly bd on outside of Omega}) is controlled
by
\begin{eqnarray}\label{DE}
    &&\sum_{R\in m(\Omega) }
    \int_{(R^*)^c}|Sa_R(x_1,x_2)|\,d\mu_1(x_1)\,d\mu_2(x_2)\\
    &&\leq \sum_{R\in m(\Omega) } \int_{(100 I^* )^c\times
    X_2}|Sa_R(x_1,x_2)|\,d\mu_1(x_1)\,d\mu_2(x_2)\nonumber\\
    &&\hskip1cm+\sum_{R\in m(\Omega) }\int_{X_1\times
    (100 J^* )^c}|Sa_R(x_1,x_2)|\,d\mu_1(x_1)\,d\mu_2(x_2)\nonumber\\
    &&=: D + E.\nonumber
\end{eqnarray}

It suffices to verify that the term $D$ is bounded by a
positive constant $C$ independent of  the atom $a$, since the
estimate for $E$ follows symmetrically. For the term $D$, by
splitting the region of integration $(100 I^* )^c\times X_2 $
into $(100 I^* )^c\times 100J $ and $ (100 I^* )^c\times
(100J)^c$, we  write $D$ as $D^{(a)}+D^{(b)}$.

Let us first estimate the term $D^{(a)}$. Using H\"older's inequality,
we have
\begin{eqnarray}\label{estimate D1} \ \ \
    D^{(a)}
    &\leq& C\sum_{R\in m(\Omega) } \mu_2(J)^{{1/2}}
        \int_{(100 I^* )^c }\Big( \int_{100J}|Sa_R(x_1,x_2)|^2\,d\mu_2(x_2)\Big)^{1/2}\,d\mu_1(x_1).
\end{eqnarray}
Next, we claim that
\begin{eqnarray}\label{claim D1}
    &&\int_{( 100I^* )^c}\Big(\int_{100J}|Sa_R(x_1,x_2)|^2\,d\mu_2(x_2)
        \Big)^{1/ 2}\,d\mu_1(x_1)\\
    &&\leq C \Big({\ell(I)\over\ell(I^*)}\Big)^{\epsilon_1} \mu_1(I)^{{1/2}}
        \Big(\ell(I)^{-4N}\ell(J)^{-4N}
        \|(\mathbbm{1}_1\otimes (\ell(J)^2L)^N)b_{R}\|^2_{L^2({X_1\times X_2})}
        \Big)^{1/ 2}\nonumber
\end{eqnarray}
for some $\epsilon_1>0$. Assuming this claim holds, then by
using H\"older's inequality, Journ\'e's Lemma and
property~(2)(iii) of Definition~\ref{def H1 atom}, we have
\begin{eqnarray*}
    D^{(a)}&\leq& C \Big(\sum_{R\in m(\Omega)}
        \mu(R)\Big({\ell(I)\over\ell(I^*)}\Big)^{2\epsilon_1} \Big)^{{1\over 2}}\\
    &&\ \times\Big(\sum_{R\in m(\Omega) } \ell(I)^{-4N}\ell(J)^{-4N}
        \|(\mathbbm{1}_1\otimes (\ell(J)^2L)^N)b_{R}\|^2_{L^2({X_1\times X_2})}
        \bigg)^{1\over 2}\\[4pt]
    &\leq& C \mu(\Omega)^{{1\over 2}} \mu(\Omega)^{-{1\over 2}} \\[4pt]
    &\leq& C.
\end{eqnarray*}

It remains to verify the claim~\eqref{claim D1}. Set
$a_{R,2}=(\mathbbm{1}_1\otimes L_2^N)b_R$; then $a_R =
(L_1^N\otimes \mathbbm{1}_2)a_{R,2}$. Then, from the definition
of the area function, we have
\begin{eqnarray}\label{e1 for 100J}
    &&\int_{100J}|Sa_R(x_1,x_2)|^2\,d\mu_2(x_2)\\
    &&=\int_{100J}\int_{\Gamma_1(x_1)}\int_{\Gamma_2(x_2)}\Big|\big((t_1^2L_1)^{N+1}e^{-t_1^2L_1}\otimes
    t_2^2L_2e^{-t_2^2L_2}\big)(a_{R,2})(y_1,y_2)\Big|^2\nonumber\\
    &&\hskip3cm {\,d\mu_2(y_2)dt_2\over
    t_2V(x_2,t_2)}{\,d\mu_1(y_1)dt_1\over t_1^{1+4N}V(x_1,t_1)}\,d\mu_2(x_2)\nonumber\\
    && = \int_{\Gamma_1(x_1)}\bigg[
    \int_{100J}\int_{\Gamma_2(x_2)}\nonumber\\
    &&\hskip.5cm\Big|t_2^2L_2e^{-t_2^2L_2}\big((t_1^2L_1)^{N+1}e^{-t_1^2L_1}
    a_{R,2}(y_1,\cdot)\big)(y_2)\Big|^2{\,d\mu_2(y_2)dt_2\over
    t_2V(x_2,t_2)}\,d\mu_2(x_2) \bigg]{\,d\mu_1(y_1)dt_1\over t_1^{1+4N}V(x_1,t_1)}\nonumber\\[4pt]
    && \leq C\int_{\Gamma_1(x_1)}
    \int_{X_2}\Big|(t_1^2L_1)^{N+1}e^{-t_1^2L_1}
    a_{R,2}(y_1,x_2)\Big|^2\,d\mu_2(x_2){\,d\mu_1(y_1)dt_1\over t_1^{1+4N}V(x_1,t_1)},\nonumber
\end{eqnarray}
where the last inequality follows from the $L^2$ estimate of the area function on $X_2$.

Define $U_j(I)=2^{j}I\backslash 2^{j-1}I$ for $j\geq1$. Then we
see that $(100I^*)^c \subset \cup_{j>4}U_j(I)$. Moreover, we
have that $\mu_1(U_j(I))\leq C2^{jn_1}\mu_1(I)$. Then, by
H\"older's inequality and the estimate in (\ref{e1 for 100J}),
we get
\begin{eqnarray*}
    &&\int_{(100I^*)^c}\bigg( \int_{100J}|Sa_R(x_1,x_2)|^2\,d\mu_2(x_2)
    \bigg)^{1\over 2}\,d\mu_1(x_1)\\[4pt]
    &&\ \leq C\sum_{j>4} \mu_1(U_j(I))^{1/2}\mu_1(I)^{{1\over 2}}
    \bigg(\int_{( 100I^* )^c\bigcap U_j(I)}  \int_0^\infty\int_{d_1(x_1,y_1)<t_1}
    \int_{X_2}\\
    &&\hskip2cm \Big|(t_1^2L_1)^{N+1}e^{-t_1^2L_1}
    a_{R,2}(y_1,x_2)\Big|^2\,d\mu_2(x_2){\,d\mu_1(y_1)dt_1\over
    t_1^{1+4N}V(x_1,t_1)} \,d\mu_1(x_1) \bigg)^{1\over 2}.
\end{eqnarray*}
Next, we split the integral area $(0,\infty)$ for $t_1$ into
three parts: $(0,\ell(I))$, $(\ell(I), d_1(x_1,x_I)/4)$ and
$(d_1(x_1,x_I)/4,\infty)$. Then the right-hand side of the
above inequality is bounded by the sum of the following three
terms
$$ D_{1}^{(a)}+D_{2}^{(a)}+D_{3}^{(a)}, $$
where
\begin{eqnarray*}
    D_{1}^{(a)}
    &:=&C\sum_{j>4} 2^{{jn_1/ 2}}\mu_1(I)^{{1\over 2}}
        \bigg\{\int_{X_2}\int_{( 100I^* )^c\bigcap U_j(I)}
        \int_0^{\ell(I)}\int_{d_1(x_1,y_1)<t_1}\\[4pt]
    &&\hskip.6cm  \times \Big|(t_1^2L_1)^{N+1}e^{-t_1^2L_1}
        a_{R,2}(y_1,x_2)\Big|^2{\,d\mu_1(y_1)dt_1\over t_1^{1+4N}V(x_1,t_1)}
        \,d\mu_1(x_1) \,d\mu_2(x_2)\bigg\}^{1\over 2},
\end{eqnarray*}
and $D_{2}^{(a)}$ and $D_{3}^{(a)}$ are the same as
$D_{1}^{(a)}$ with the integral $\int_0^{\ell(I)}$ replaced by
$\int_{\ell(I)}^{d_1(x_1,x_I)/4}$ and
$\int_{d_1(x_1,x_I)/4}^\infty$, respectively. Here we use $x_I$
to denote the center of the dyadic cube $I$.

We first consider the term $D_{1}^{(a)}$. We define $E_j(I):=\{y_1:
d_1(x_1,y_1)<\ell(I)\ {\rm for\ some\ } x_1\in ( 100I^* )^c\cap
U_j(I)\}$. Then we can see that ${\rm
dist}(E_j(I),I)>2^{j-2}\ell(I)+\ell(I^*)$. Now we have
\begin{align*}
    D_{1}^{(a)}
    &\leq C\sum_{j>4} 2^{{jn_1/ 2}}\mu_1(I)^{{1\over 2}}
        \bigg\{\int_{X_2}\int_0^{\ell(I)}\int_{E_j(I)}
        \Big|(t_1^2L_1)^{N+1}e^{-t_1^2L_1}
        \alpha_{R,2}(y_1,x_2)\Big|^2{\,d\mu_1(y_1)dt_1\over t_1^{1+4N}}
        \,d\mu_2(x_2)\bigg\}^{1\over 2}\\
    &\leq C\sum_{j>4} 2^{{jn_1/ 2}}\mu_1(I)^{{1\over 2}}
        \bigg\{\int_0^{\ell(I)}e^{-(2^{j-2}\ell(I)+\ell(I^*))^2/
        (ct_1^2)}{dt_1\over t_1^{1+4N}}
        \ \|a_{R,2}\|_{L^2({X_1\times X_2})}^2\bigg\}^{1\over 2}\\
    &\leq C\sum_{j>4} 2^{{jn_1/ 2}}\mu_1(I)^{{1\over 2}} \bigg\{
        {\ell(I)^{\beta} \over (2^{j-2}\ell(I)+\ell(I^*))^{\beta}}\
        \ell(I)^{-4N}\ \|a_{R,2}\|_{L^2({X_1\times X_2})}^2\bigg\}^{1\over
        2},\hskip.2cm
\end{align*}
where the second inequality follows from the Davies--Gaffney
estimates, and the third inequality follows from the fact that
$e^{-x}\leq x^{-\beta}$ for all $x>0$ and $\beta>0$ and that we
choose $\beta$ satisfying $\beta>4N$.

Moreover, noting that
\begin{eqnarray}\label{e1 for claim D1}
    \sum_{j>4} 2^{{jn_1/ 2}} {\ell(I)^{\beta/2} \over
        (2^{j-2}\ell(I)+\ell(I^*))^{\beta/2}}
    \leq \Big({\ell(I)\over\ell(I^*)}\Big)^{n_1/2-\beta/2},
\end{eqnarray}
we obtain that $D_{1}^{(a)}$ is bounded by the right-hand side of
(\ref{claim D1}) for $\epsilon_1:=\beta/2-n_1/2$.

Next we consider the term $D_{2}^{(a)}$. Similarly, we set
\[
    F_j(I)
    := \{y_1: d_1(x_1,y_1)<{d_1(x_1,x_I)/ 4}~\text{for some}~x_1
        \in (100I^* )^c\cap U_j(I)\}.
\]
We see that ${\rm dist}(F_j(I),I) > 2^{j-3}\ell(I) +
\ell(I^*)$. Now we have
\begin{align*}
    D_{2}^{(a)} &\leq C\sum_{j>4} 2^{{jn_1/ 2}}\mu_1(I)^{{1\over 2}}
    \bigg\{\int_{X_2}\int_{\ell(I)}^\infty\int_{F_j(I)}
    \Big|(t_1^2L_1)^{N+1}e^{-t_1^2L_1}
    a_{R,2}(y_1,x_2)\Big|^2{\,d\mu_1(y_1)dt_1\over t_1^{1+4N}}
    \,d\mu_2(x_2)\bigg\}^{1\over2}\\
     &\leq C\sum_{j>4} 2^{{jn_1/ 2}}\mu_1(I)^{{1\over 2}}
    \bigg\{\int_{\ell(I)}^{\infty}e^{-(2^{j-3}\ell(I)+\ell(I^*))^2/
    (ct_1^2)}{dt_1\over t_1^{1+4N}}
    \ \|a_{R,2}\|_{L^2({X_1\times X_2})}^2\bigg\}^{1\over2}\\[4pt]
     &\leq C\sum_{j>4} 2^{{jn_1/ 2}}\mu_1(I)^{{1\over 2}} \bigg\{
    {\ell(I)^{\beta} \over (2^{j-3}\ell(I)+\ell(I^*))^{\beta}}\
    \ell(I)^{-4N}\ \|a_{R,2}\|_{L^2({X_1\times X_2})}^2\bigg\}^{1\over2},
\end{align*}
where the second inequality follows from the Davies--Gaffney
estimates, and $\beta$ is chosen to satisfy $n_1 < \beta<4N$.
Now using (\ref{e1 for claim D1}), we obtain that $D_{2}^{(a)}$
is bounded by the right-hand side of (\ref{claim D1}) for
$\epsilon_1:=\beta/2-n_1/2$.

Now we turn to the term $D_{3}^{(a)}$. Since $x_1\in ( 100I^*
)^c\cap U_j(I)$, we can see that
$d(x_1,x_I)>2^{j-1}\ell(I)+\ell(I^*)$. Thus, the
Davies--Gaffney estimates imply that
\begin{eqnarray*}
    D_{3}^{(a)} &\leq& C\sum_{j>4} 2^{{jn_1/ 2}}\mu_1(I)^{{1\over 2}}\\
    &&\times
    \bigg\{\int_{X_2}\int_{2^{j-1}\ell(I)+\ell(I^*)}^\infty\int_{X_1}
    \Big|(t_1^2L_1)^{N+1}e^{-t_1^2L_1}
    a_{R,2}(y_1,x_2)\Big|^2{\,d\mu_1(y_1)dt_1\over t_1^{1+4N}} \,d\mu_2(x_2)\bigg\}^{1\over2}\\[4pt]
     &\leq& C\sum_{j>4} 2^{{jn_1/ 2}}\mu_1(I)^{{1\over 2}}
    \bigg\{\int_{2^{j-1}\ell(I)+\ell(I^*)}^\infty {dt_1\over
    t_1^{1+4N}}\ \|a_{R,2}\|_{L^2({X_1\times X_2})}^2\bigg\}^{1\over2}\\[4pt]
    &\leq& C\sum_{j>4} 2^{{jn_1/ 2}}\mu_1(I)^{{1\over 2}}
    \bigg\{{\ell(I)^{4N} \over (2^{j-1}\ell(I)+\ell(I^*))^{4N}}\
    \ell(I)^{-4N}\ \|a_{R,2}\|_{L^2({X_1\times X_2})}^2\bigg\}^{1\over2},
\end{eqnarray*}
 Now using
(\ref{e1 for claim D1}), we obtain that $D_{3}^{(a)}$ is bounded by the
right-hand side of (\ref{claim D1}) for $\epsilon_1:=2N-n_1/2$.

Combining the estimates of $D_{1}^{(a)}$, $D_{2}^{(a)}$ and
$D_{3}^{(a)}$, we obtain that the claim (\ref{claim D1}) holds
for $\epsilon_1:=\beta/2-n_1/2$, and hence $D^{(a)}$ is
uniformly bounded.

We now consider the term $D^{(b)}$. Similar to the estimates
for the term $D^{(a)}$, we set $U_{j_1}(I)=2^{j_1}I\backslash
2^{j_1-1}I $ for $j_1\geq1$ and $U_{j_2}(J)=2^{j_2}J\backslash
2^{j_2-1}J $ for $j_2\geq1$. Then we have $(100I^*)^c \subset
\cup_{j_1>4}U_{j_1}(I)$ and $(100J)^c \subset
\cup_{j_2>4}U_{j_2}(J)$. Moreover, we have the following
measure estimate for the annuli: $\mu_1(U_{j_1}(I))\leq
C2^{j_1n_1}\mu_1(I)$ and $\mu_2(U_{j_2}(J))\leq
C2^{j_2n_2}\mu_2(J)$. Now we have
\begin{eqnarray}\label{term D2}
    \ \ D^{(b)}
    &=& \sum_{R\in m(\Omega)} \int_{( 100I^* )^c}\int_{(100J)^c}
        |Sa_R(x_1,x_2)|\,d\mu_1(x_1)d\mu_2(x_2)\\[4pt]
    &\leq & \sum_{R\in m(\Omega)} \sum_{j_1>4}\sum_{j_2>4}
        \int_{( 100I^* )^c\cap U_{j_1}(I)}\int_{(100S)^c\cap U_{j_2}(J)}
        |Sa_R(x_1,x_2)|\,d\mu_1(x_1)d\mu_2(x_2)\nonumber\\[4pt]
    &\leq& C \sum_{R\in m(\Omega)} \mu(R)^{1/2}\sum_{j_1>4}\sum_{j_2>4}
        2^{{j_1n_1/ 2}}2^{{j_2n_2/ 2}}\nonumber\\
    &&\times \bigg(\int_{( 100I^* )^c\cap U_{j_1}(I)}\int_{(100J)^c\cap
        U_{j_2}(J)} |Sa_R(x_1,x_2)|^2\,d\mu_1(x_1)d\mu_2(x_2)\bigg)^{1\over 2},\nonumber
\end{eqnarray}
where the second inequality follows from H\"older's inequality.

We claim that
\begin{eqnarray}\label{claim for D2}
    && \sum_{j_1>4}\sum_{j_2>4}
        2^{{j_1n_1/ 2}}2^{{j_2n_2/ 2}}\bigg(\int_{( 100I^* )^c\bigcap U_{j_1}(I)}\int_{(100J)^c\bigcap
        U_{j_2}(J)} |Sa_R(x_1,x_2)|^2\,d\mu_1(x_1)d\mu_2(x_2)\bigg)^{1\over 2}\\
    &&\leq C\Big({\ell(I)\over\ell(I^*)}\Big)^{\epsilon_1}
        \big(\ell(I)^{-4N}\ell(J)^{-4N}\|b_R\|_{L^2(X_1\times X_2)}^2\big)^{1/2} \nonumber
\end{eqnarray}
for some $\epsilon_1>0$, which, together with (\ref{term D2}), implies that
\begin{eqnarray*}
D^{(b)}
    &\leq& C \sum_{R\in m(\Omega)} \mu(R)^{1/2}\Big({\ell(I)\over\ell(I^*)}\Big)^{\epsilon_1}
        \big(\ell(I)^{-4N}\ell(J)^{-4N}\|b_R\|_{L^2(X_1\times X_2)^2}\big)^{1/2}\\
    &\leq& C \Big(\sum_{R\in m(\Omega)} \mu(R)\Big({\ell(I)\over\ell(I^*)}\Big)^{2\epsilon_1}\Big)^{1/2}
        \Big(\sum_{R\in m(\Omega)} \ell(I)^{-4N}\ell(J)^{-4N}\|b_R\|_{L^2(X_1\times X_2)^2}\Big)^{1/2}\\
    &\leq& C \mu(\Omega)^{1/2}\mu(\Omega)^{-1/2}\\
    &\leq& C.
\end{eqnarray*}

\noindent From the definitions of the area function $Sf$ and
the $(H_{L_1,L_2}^1,2,N)$-atom $a_R$, we have
\begin{eqnarray*}
    |Sa_R(x)|^2
    &=&\int_0^\infty\int_{d_1(x_1,y_1)<t_1}\int_0^\infty\int_{d_2(x_2,y_2)<t_2}\Big|(t_1^2L_1)^{N+1}e^{-t_1^2L_1}\otimes
        (t_2^2L_2)^{N+1}e^{-t_2^2L_2}(b_R)(y_1,y_2)\Big|^2\\[4pt]
    &&\hskip.7cm\times {\,d\mu_1(y_1)dt_1\over t_1^{1+4N}V(x_1,t_1)}{\,d\mu_2(y_2)dt_2\over
        t_2^{1+4N}V(x_2,t_2)}.
\end{eqnarray*}
Similarly to the estimate for the term  $D^{(a)}$, we split the
region of integration $(0,\infty)$ for $t_1$ into three parts
$(0,\ell(I))$, $(\ell(I), d_1(x_1,x_I)/4)$ and
$(d_1(x_1,x_I)/4,\infty)$, and the region of integration
$(0,\infty)$ for $t_2$ into three parts $(0,\ell(J))$,
$(\ell(J), d_2(x_2,x_J)/4)$ and $(d_2(x_2,x_J)/4,\infty)$.
Hence $|Sa_R(x)|^2$ is decomposed into
\begin{eqnarray*}
    &&|Sa_R(x)|^2\\[5pt]
    &&\ =\bigg( \int_0^{\ell(I)}\!\!\int_0^{\ell(J)} +
    \int_0^{\ell(I)}\!\!\int_{\ell(J)}^{d_2(x_2,x_J)\over 4}+
    \int_0^{\ell(I)}\!\!\int_{d_2(x_2,x_J)\over
    4}^\infty+\int_{\ell(I)}^{d_1(x_1,x_I)\over 4}\!\!\int_0^{\ell(J)}+
    \int_{\ell(I)}^{d_1(x_1,x_I)\over
    4}\!\!\int_{\ell(J)}^{d_2(x_2,x_J)\over 4} \\[4pt]
    &&\hskip1cm   +
    \int_{\ell(I)}^{d_1(x_1,x_I)\over 4}\int_{d_2(x_2,x_J)\over 4}^\infty
    +\int_{d_1(x_1,x_I)\over 4}^\infty\int_0^{\ell(J)} +
    \int_{d_1(x_1,x_I)\over 4}^\infty\int_{\ell(J)}^{d_2(x_2,x_J)\over 4} +
    \int_{d_1(x_1,x_I)\over 4}^\infty\int_{d_2(x_2,x_J)\over 4}^\infty
    \bigg)\\[4pt]
    &&\hskip1.7cm
    \int_{d_1(x_1,y_1)<t_1}\int_{d_2(x_2,y_2)<t_2}\Big|(t_1^2L_1)^{N+1}e^{-t_1^2L_1}\otimes
    (t_2^2L_2)^{N+1}e^{-t_2^2L_2}(b_R)(y_1,y_2)\Big|^2\\[4pt]
    &&\hskip2.7cm {\,d\mu_1(y_1)dt_1\over t_1^{1+4N}V(x_1,t_1)}{\,d\mu_2(y_2)dt_2\over
    t_2^{1+4N}V(x_2,t_2)}\\
    &&=:\mathbf{d}_{1,1}(x_1,x_2)+\mathbf{d}_{1,2}(x_1,x_2)+\mathbf{d}_{1,3}(x_1,x_2)+\mathbf{d}_{2,1}(x_1,x_2)+
    \mathbf{d}_{2,2}(x_1,x_2)\\
    &&\hskip1cm+\mathbf{d}_{2,3}(x_1,x_2)+\mathbf{d}_{3,1}(x_1,x_2)+\mathbf{d}_{3,2}(x_1,x_2)+\mathbf{d}_{3,3}(x_1,x_2)\\
    &&= \sum_{\iota=1}^3\sum_{\kappa=1}^3 \mathbf{d}_{\iota,\kappa}(x_1,x_2).
\end{eqnarray*}

Now for $\iota=1,2,3$ and $\kappa=1,2,3$ we set
\begin{eqnarray*}
    D_{\iota,\kappa}^{(b)}:=C  \sum_{j_1>4}\sum_{j_2>4}
    2^{{j_1n_1\over 2}}2^{{j_2n_2\over 2}}\bigg(\int_{( 100I^* )^c\bigcap U_{j_1}(I)}\int_{(100J)^c\bigcap
    U_{j_2}(J)} \mathbf{d}_{\iota,\kappa}(x_1,x_2) \,d\mu_1(x_1)d\mu_2(x_2)\bigg)^{1\over 2}.
\end{eqnarray*}

We first consider $D_{1,1}^{(b)}$. Similar to the estimate in $D_{1}^{(a)}$, we define $E_{j_1}(I):=\{y_1:
d_1(x_1,y_1)<\ell(I)\ {\rm for\ some\ } x_1\in ( 100I^* )^c\bigcap
U_{j_1}(I)\}$ and $E_{j_2}(J):=\{y_2:
d_2(x_2,y_2)<\ell(J)\ {\rm for\ some\ } x_2\in ( 100J )^c\bigcap
U_{j_2}(J)\}$. Then we get ${\rm
dist}(E_{j_1}(I),I)>2^{j_1-2}\ell(I)+\ell(I^*)$ and ${\rm
dist}(E_{j_2}(J),J)>2^{j_1-2}\ell(J)$. Now we have
\begin{eqnarray*}
    &&\int_{( 100I^* )^c\bigcap U_{j_1}(I)}\int_{(100J)^c\bigcap
    U_{j_2}(J)} \mathbf{d}_{1,1}(x_1,x_2) \,d\mu_1(x_1)d\mu_2(x_2)\\
    &&=\int_0^{\ell(I)}\int_{E_{j_1}(I)}\int_0^{\ell(J)}\int_{E_{j_2}(J)}  \Big|(t_1^2L_1)^{N+1}e^{-t_1^2L_1}\otimes
    (t_2^2L_2)^{N+1}e^{-t_2^2L_2}(b_R)(y_1,y_2)\Big|^2\\
    &&\hskip2cm{\,d\mu_1(y_1)dt_1\over t_1^{1+4N}}{\,d\mu_2(y_2)dt_2\over
    t_2^{1+4N}}\\
    &&\leq C \int_0^{\ell(I)}e^{-(2^{j_1-2}\ell(I)+\ell(I^*))^2/
    (ct_1^2)}{dt_1\over t_1^{1+4N}}\int_0^{\ell(J)}e^{-(2^{j_2-2}\ell(J))^2/
    (ct_2^2)}{dt_2\over t_2^{1+4N}}
    \ \|b_{R}\|_{L^2({X_1\times X_2})}^2\\
    &&\leq C{\ell(I)^{\beta} \over (2^{j_1-2}\ell(I)+\ell(I^*))^{\beta}}\
    \ell(I)^{-4N} 2^{-j_2\beta}\ell(J)^{-4N}\|b_{R}\|_{L^2({X_1\times X_2})}^2,
\end{eqnarray*}
where the second inequality follows from the Davies--Gaffney
estimates, and the third inequality follows from the fact that
$e^{-x}\leq x^{-\beta}$ for all $x>0$ and $\beta>0$ and that we
choose $\beta$ satisfying $\beta>4N$.

Thus,
\begin{eqnarray*}
    D_{1,1}^{(b)}
    &\leq& C  \sum_{j_1>4}
        2^{{j_1n_1\over 2}}{\ell(I)^{\beta\over2} \over (2^{j_1-2}\ell(I)+\ell(I^*))^{\beta\over2}}
        \sum_{j_2>4}2^{{j_2n_2\over 2}}2^{-j_2\beta\over2}\big(\ell(I)^{-4N}\ell(J)^{-4N}\|b_R\|_{L^2(X_1\times X_2)^2}\big)^{1/2}\\
    &\leq&  C\Big({\ell(I)\over\ell(I^*)}\Big)^{\epsilon_1}\big(\ell(I)^{-4N}\ell(J)^{-4N}\|b_R\|_{L^2(X_1\times X_2)}^2\big)^{1/2},
\end{eqnarray*}
where the second inequality follows from (\ref{e1 for claim
D1}) with $\epsilon_1 := \beta/2-n_1/2$. Note that
$\beta>\max\{n_1,n_2\}$ follows from the fact that
$N>\max\{n_1/4,n_2/4\}$.

As for $D_{1,2}^{(b)}$, similar to the term $D_{2}^{(a)}$, set
$F_{j_2}(J):=\{y_2: d_2(x_2,y_2)<{d_2(x_2,x_J)/ 4}\ {\rm for\
some\ }\newline x_2\in (100J )^c\bigcap U_{j_2}(J)\}$. Then  we
can see that ${\rm dist}(F_{j_2}(J),J)>2^{j_2-3}\ell(J)$. Now
we have
\begin{eqnarray*}
    &&\int_{( 100I^* )^c\bigcap U_{j_1}(I)}\int_{(100J)^c\bigcap
    U_{j_2}(J)} \mathbf{d}_{1,2}(x_1,x_2) \,d\mu_1(x_1)d\mu_2(x_2)\\
    &&=\int_0^{\ell(I)}\int_{E_{j_1}(I)}\int_{\ell(J)}^{d_2(x_2,x_J)\over4}\int_{F_{j_2}(J)}  \Big|(t_1^2L_1)^{N+1}e^{-t_1^2L_1}\otimes
    (t_2^2L_2)^{N+1}e^{-t_2^2L_2}(b_R)(y_1,y_2)\Big|^2\\
    &&\hskip2cm{\,d\mu_1(y_1)dt_1\over t_1^{1+4N}}{\,d\mu_2(y_2)dt_2\over
    t_2^{1+4N}}\\
    &&\leq C \int_0^{\ell(I)}e^{-(2^{j_1-2}\ell(I)+\ell(I^*))^2/
    (ct_1^2)}{dt_1\over t_1^{1+4N}}\int_{\ell(J)}^\infty e^{-(2^{j_2-2}\ell(J))^2/
    (ct_2^2)}{dt_2\over t_2^{1+4N}}
    \ \|b_{R}\|_{L^2({X_1\times X_2})}^2\\
    &&\leq C{\ell(I)^{\beta_1} \over (2^{j_1-2}\ell(I)+\ell(I^*))^{\beta_1}}\
    \ell(I)^{-4N} 2^{-j_2\beta_2}\ell(J)^{-4N}\|b_{R}\|_{L^2({X_1\times X_2})}^2,
\end{eqnarray*}
where the second inequality follows from the Davies--Gaffney
estimates, and the third inequality follows from the fact that
$e^{-x}\leq x^{-\beta}$ for all $x>0$ and $\beta>0$, and that
we choose $\beta_1$ satisfying $\beta_1>4N$ and $\beta_2$
satisfying $n_2<\beta_2<4N$. Hence, similar to the estimate of
the term $D_{1,1}^{(b)}$,
\begin{eqnarray*}
    D_{1,2}^{(b)}
    &\leq&  C\Big({\ell(I)\over\ell(I^*)}\Big)^{\epsilon_1}\big(\ell(I)^{-4N}\ell(J)^{-4N}\|b_R\|_{L^2(X_1\times X_2)}^2\big)^{1/2}
\end{eqnarray*}
with $\epsilon_1:=\beta_1/2-n_1/2$. Note that $\beta_1>n_1$ follows from the fact that $N>n_1/4$.

As for $D_{1,3}^{(b)}$, since $x_2 \in (100J)^c\cap U_{j_2}(J)
$, we see that $d_2(x_2,x_J)>2^{j_2-1}\ell(J)$. Thus, the
Davies--Gaffney estimates imply that
\begin{eqnarray*}
    &&\int_{( 100I^* )^c\bigcap U_{j_1}(I)}\int_{(100J)^c\bigcap
    U_{j_2}(J)} \mathbf{d}_{1,3}(x_1,x_2) \,d\mu_1(x_1)d\mu_2(x_2)\\
    &&=\int_0^{\ell(I)}\int_{E_{j_1}(I)}\int_{2^{j_2-1}\ell(J)}^{\infty}\int_{X_2}  \Big|(t_1^2L_1)^{N+1}e^{-t_1^2L_1}\otimes
    (t_2^2L_2)^{N+1}e^{-t_2^2L_2}(b_R)(y_1,y_2)\Big|^2\\
    &&\hskip2cm{\,d\mu_1(y_1)dt_1\over t_1^{1+4N}}{\,d\mu_2(y_2)dt_2\over
    t_2^{1+4N}}\\
    &&\leq C \int_0^{\ell(I)}e^{-(2^{j_1-2}\ell(I)+\ell(I^*))^2/
    (ct_1^2)}{dt_1\over t_1^{1+4N}}\int_{2^{j_2-1}\ell(J)}^\infty {dt_2\over t_2^{1+4N}}
    \ \|b_{R}\|_{L^2({X_1\times X_2})}^2\\
    &&\leq C{\ell(I)^{\beta_1} \over (2^{j_1-2}\ell(I)+\ell(I^*))^{\beta_1}}\
    \ell(I)^{-4N} 2^{-4Nj_2}\ell(J)^{-4N}\|b_{R}\|_{L^2({X_1\times X_2})}^2,
\end{eqnarray*}
in which we choose $\beta_1>4N$. Hence, we have
\begin{eqnarray*}
    D_{1,3}^{(b)}
    &\leq&  C\Big({\ell(I)\over\ell(I^*)}\Big)^{\epsilon_1}\big(\ell(I)^{-4N}\ell(J)^{-4N}\|b_R\|_{L^2(X_1\times X_2)}^2\big)^{1/2}
\end{eqnarray*}
with $\epsilon_1:=\beta_1/2-n_1/2$. Note that $\beta_1>n_1$ follows from the fact that $N>n_1/4$.

For the remaining terms $D^{(b)}_{\iota,\kappa}$ for
$\iota=2,3$ and $\kappa=1,2,3$, we estimate the integral with
respect to the first variable $t_1$ in a way similar to that
for $D^{(a)}_\iota$ above, while for the integral with respect
to $t_2$, we use an estimate similar to that used for the $t_2$
integral in $D^{(b)}_{1,\kappa}$ above. This completes the
estimate of $D^{(b)}$, and hence that of $D$.

The estimate for the term $E$ is symmetric to that of $D$.

Combining the estimates of $D$ and $E$, we obtain (\ref{SL
alpha uniformly bd on outside of Omega}), which, together with
the fact that $\|Sa\|_{L^1(\cup R^*)}\leq C$, yields the
estimate~\eqref{e4.11}. Thus Lemma~\ref{leAtom} is proved.
\end{proof}

This completes the proof of Step~1.
\end{proof}


\begin{proof}[Proof of Step 2]
Our goal is to show that every function $f\in H_{L_1,L_2}^1(
{X_1\times X_2} )\cap L^2( {X_1\times X_2} )$ has an
$(H^1_{L_1, L_2}, 2, M)$-atom representation, with appropriate
quantitative control of the coefficients. To this end, we
follow the standard tent space approach, and we are now ready
to establish the atomic decomposition of $H_{L_1,L_2}^1(
{X_1\times X_2} )\cap L^2( {X_1\times X_2} )$.

\begin{prop}\label{prop-product H-SL subset H-at}
    Suppose $M\geq 1$. If $f\in
    H_{L_1,L_2}^1( {X_1\times X_2}  )\cap
    L^2( {X_1\times X_2}  )$, then there exist a
    family of $(H^1_{L_1, L_2}, 2, M)$-atoms $\{a_j\}_{j=0}^\infty$ and a sequence of
    numbers $\{\lambda_j\}_{j=0}^\infty\in \ell^1$ such that $f$ can be
    represented in the form $f=\sum\lambda_ja_j$, with the sum
    converging in $L^2( {X_1\times X_2} )$, and
    $$
        \|f\|_{\mathbb{H}^1_{L_1,L_2,at,N}( {X_1\times X_2} )}
        \leq C\sum_{j=0}^\infty|\lambda_j|
        \leq C\|f\|_{H_{L_1,L_2}( {X_1\times X_2} )},
    $$
    where $C$ is independent of $f$. In particular,
    $$
        H_{L_1,L_2}^1( {X_1\times X_2} )\cap L^2( {X_1\times X_2} )\ \
        \subset\ \mathbb{H}^1_{L_1,L_2,at,M}( {X_1\times X_2} ).
    $$
\end{prop}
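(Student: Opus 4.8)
The plan is to follow the tent-space route: lift $f$ to a function on the product "upper half-space" via the area-function integrand, decompose that lift in the product tent space, and then project each tent atom back down through a Calder\'on reproducing formula engineered so that finite propagation speed forces the compact support required of a product atom.

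\smallskip

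First I would set up the reproducing formula. Given $f\in H^1_{L_1,L_2}(X_1\times X_2)\cap L^2(X_1\times X_2)$, put
$$
F(y_1,t_1,y_2,t_2):=\big(t_1^2L_1e^{-t_1^2L_1}\otimes t_2^2L_2e^{-t_2^2L_2}\big)f(y_1,y_2),
$$
the integrand of the area function in~\eqref{esf}, so that $\|F\|_{T^1}=\|Sf\|_{L^1}=\|f\|_{H^1_{L_1,L_2}}$, while $f\in L^2$ gives $\|F\|_{T^2}\sim\|f\|_{L^2}<\infty$; thus $F$ lies in the product tent spaces $T^1\cap T^2$. Fix an even $\varphi\in C_0^\infty(\mathbb R)$ with $\supp\,\varphi\subset(-1,1)$, let $\Phi$ be its Fourier transform, choose $\varphi$ so that $c_\Phi:=\int_0^\infty s^{2M+2}\Phi(s)e^{-s^2}\,\frac{ds}{s}\neq0$, and set $C_M:=c_\Phi^{-2}$. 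By the $H_\infty$-functional calculus of $L_1$ and $L_2$ applied in each variable separately, one has $f=\pi_M(F)$ on $H^2(X_1\times X_2)$, where
$$
\pi_M(G)(x):=C_M\int_0^\infty\!\!\int_0^\infty (t_1^2L_1)^M\Phi(t_1\sqrt{L_1})\otimes(t_2^2L_2)^M\Phi(t_2\sqrt{L_2})\,G(\cdot,t_1,\cdot,t_2)(x)\,\frac{dt_1}{t_1}\frac{dt_2}{t_2}.
$$
The crucial point is that by Lemma~\ref{lemma finite speed} each factor $(t_i^2L_i)^{\sigma}\Phi(t_i\sqrt{L_i})$ has kernel supported in $\{d_i(x_i,y_i)\le t_i\}$; this finite-propagation-speed localisation is precisely what will produce the \emph{compact} support demanded in Definition~\ref{def H1 atom}(ii).

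\smallskip

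Next I would invoke the atomic decomposition of the product tent space $T^1$ over $X_1\times X_2$ (the multiparameter analogue of the Coifman--Meyer--Stein decomposition~\cite{CMS}) to write $F=\sum_j\lambda_j A_j$, where each $A_j$ is a $T^1$-atom supported in the tent $\widehat{\Omega_j}$ over an open set $\Omega_j$ of finite measure, normalised by $\iint_{\widehat{\Omega_j}}|A_j|^2\,\frac{d\mu_1\,dt_1\,d\mu_2\,dt_2}{t_1V(x_1,t_1)\,t_2V(x_2,t_2)}\le\mu(\Omega_j)^{-1}$, with $\sum_j|\lambda_j|\le C\|F\|_{T^1}=C\|f\|_{H^1_{L_1,L_2}}$. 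Since $F\in T^2$ the series converges in $T^2$, and because $\pi_M$ is bounded from $T^2$ to $L^2$ (a quadratic estimate from the functional calculus), we get $f=\pi_M(F)=\sum_j\lambda_j\,\pi_M(A_j)$ with $L^2$-convergence. I then set $a_j:=\pi_M(A_j)$ and claim each $a_j$ is a fixed multiple of an $(H^1_{L_1,L_2},2,M)$-atom. Decomposing $\Omega_j$ into its maximal dyadic rectangles $m(\Omega_j)$ and letting $T(R)$ be the part of the tent over $R=I\times J$ (so the $T(R)$ tile $\widehat{\Omega_j}$), I define
$$
b_R:=C_M\iint_{T(R)} t_1^{2M}\Phi(t_1\sqrt{L_1})\otimes t_2^{2M}\Phi(t_2\sqrt{L_2})\,A_j(\cdot,t_1,\cdot,t_2)\,\frac{dt_1}{t_1}\frac{dt_2}{t_2},
$$
so that $a_R:=(L_1^M\otimes L_2^M)b_R$ and $a_j=\sum_R a_R$. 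For $\sigma_1,\sigma_2\in\{0,\dots,M\}$ the expression $(L_1^{\sigma_1}\otimes L_2^{\sigma_2})b_R$ equals an integral over $T(R)$ of $t_1^{2(M-\sigma_1)}(t_1^2L_1)^{\sigma_1}\Phi(t_1\sqrt{L_1})\otimes t_2^{2(M-\sigma_2)}(t_2^2L_2)^{\sigma_2}\Phi(t_2\sqrt{L_2})A_j$; on $T(R)$ the heights obey $t_i\lesssim\ell(I),\ell(J)$ and Lemma~\ref{lemma finite speed} confines each kernel to $\{d_i\le t_i\}$, so the support lands in a fixed dilate $\overline CR$, giving~(ii). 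For~(iii) I would pair $(\ell(I)^2L_1)^{\sigma_1}\otimes(\ell(J)^2L_2)^{\sigma_2}b_R$ against $L^2$-functions and use the $T^2$-energy of $A_j$ over $T(R)$: Cauchy--Schwarz and the quadratic estimate bound $\|(\ell(I)^2L_1)^{\sigma_1}\otimes(\ell(J)^2L_2)^{\sigma_2}b_R\|_2^2$ by $C\ell(I)^{4M}\ell(J)^{4M}$ times that energy, so $\sum_R\ell(I)^{-4M}\ell(J)^{-4M}\|(\ell(I)^2L_1)^{\sigma_1}\otimes(\ell(J)^2L_2)^{\sigma_2}b_R\|_2^2\le C\,\mu(\Omega_j)^{-1}$ and likewise $\|a_j\|_2\le C\mu(\Omega_j)^{-1/2}$; since $\bigcup_R\overline CR$ sits inside an open set of measure $\le C\mu(\Omega_j)$ (strong maximal theorem), that set serves as the ambient $\Omega$ for $a_j$. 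Absorbing constants, $a_j/C$ is an atom and $f=\sum_j(C\lambda_j)(a_j/C)$ with $\sum_j|C\lambda_j|\le C\|f\|_{H^1_{L_1,L_2}}$, which is the proposition.

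\smallskip

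I expect the main obstacle to be genuinely multiparameter: the product tent-space atomic decomposition together with the verification of the normalisation~(iii). Unlike the one-parameter situation, tent atoms are supported over arbitrary open sets $\Omega$ rather than single balls, and passing from the global $T^2$-energy of $A_j$ to the rectangle-by-rectangle weighted sum requires the maximal-dyadic-rectangle decomposition controlled by Journ\'e's covering lemma (Lemma~\ref{lemma-Journe}); this is exactly the mechanism that forces the Journ\'e-type condition built into Definition~\ref{def H1 atom}. By comparison, the functional-calculus reproducing formula and the finite-propagation-speed support localisation of Lemma~\ref{lemma finite speed} are routine one-parameter ingredients, applied here in each variable separately.
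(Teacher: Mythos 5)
Your overall strategy --- lift $f$ to $F=(t_1^2L_1e^{-t_1^2L_1}\otimes t_2^2L_2e^{-t_2^2L_2})f$, decompose $F$ in a product tent space, and project back with a reproducing formula built from $\Phi(t\sqrt{L})$ so that Lemma~\ref{lemma finite speed} yields the support conditions --- is exactly the paper's strategy, and your treatment of the projection step (the support condition~(ii) via finite propagation speed, the normalisation~(iii) via Cauchy--Schwarz against the quadratic estimate, and the regrouping over maximal dyadic rectangles inside a strong-maximal enlargement of $\Omega_j$) matches the paper's verification closely. The problem is the step you invoke as a black box: the atomic decomposition of the \emph{product} tent space $T^1$ over $X_1\times X_2$, with atoms supported in tents over arbitrary open sets, carrying Chang--Fefferman rectangle structure, and with the series converging in $T^2$ when $F\in T^1\cap T^2$. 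No such theorem is available to cite: \cite{CMS} is one-parameter and Euclidean, and its atoms are supported in tents over balls; the passage to the product setting is not a routine adaptation (this is the same obstruction that Carleson's example \cite{C} creates for rectangle-based product theories, and the reason the atoms of \cite{CF2} are built over open sets rather than rectangles). For product spaces of homogeneous type this decomposition is not in the literature in quotable form, and it is precisely the mathematical content of Proposition~\ref{prop-product H-SL subset H-at}.

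What the paper does instead is prove that decomposition directly, fused with the projection: it runs a stopping-time argument on the level sets $\Omega_\ell=\{Sf>2^\ell\}$ of the area function, forms the rectangle families $B_\ell$ (dyadic rectangles meeting $\Omega_\ell$ in proportion greater than $1/(2A_0)$ but not $\Omega_{\ell+1}$), enlarges via the strong maximal function to $\widetilde\Omega_\ell$, groups the tiles $T(R)$, $R\in B_\ell$, inside maximal dyadic rectangles of $\widetilde\Omega_\ell$, and then proves by hand both the $\ell^1$ bound $\sum_\ell|\lambda_\ell|\lesssim\|Sf\|_{L^1}$ (via the key energy estimate $\sum_{R\in B_\ell}\iint_{T(R)}|F|^2\,d\mu_1\,d\mu_2\,dt_1\,dt_2/(t_1t_2)\lesssim 2^{2\ell}\mu(\widetilde\Omega_\ell)$, obtained by integrating $(Sf)^2$ over $\widetilde\Omega_\ell\setminus\Omega_{\ell+1}$) and the $L^2$ convergence of the atomic series through a tail estimate. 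So to complete your proof you would have to supply exactly this construction, at which point your ``tent-space atom plus projection'' packaging collapses into the paper's argument. In short: the projection half of your proposal is correct and coincides with the paper's, but the decomposition half --- which you yourself flag as the main obstacle --- is assumed rather than proved, and it cannot be outsourced to \cite{CMS}.
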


\begin{proof}
Let $f\in H_{L_1,L_2}^1( X_1\times X_2 )\cap L^2( {X_1\times
X_2} )$. For each $\ell\in\mathbb{Z}$, define
\begin{eqnarray*}
    \Omega_\ell&:=&\{(x_1,x_2)\in X_1\times X_2: Sf > 2^\ell \},\\
    B_\ell&:=&\Big\{R=I_{\alpha_1}^{k_1}\times I_{\alpha_2}^{k_2}:
        \mu( R\cap \Omega_\ell)>{1\over 2A_0}\mu(R),\  \mu( R\cap \Omega_{\ell+1})\leq {1\over 2A_0}\mu(R) \Big\}, {\rm\ and}\\
    \widetilde{\Omega}_\ell&:=&\Big\{(x_1,x_2)\in X_1\times X_2: \mathcal{M}_s(\chi_{\Omega_\ell})>{1\over2A_0} \Big\},
\end{eqnarray*}
where $\mathcal{M}_s$ is the strong maximal function on $
X_1\times X_2$.

For each rectangle $R = I_{\alpha_1}^{k_1}\times
I_{\alpha_2}^{k_2}$ in $X_1\times X_2$, the \emph{tent $T(R)$}
is defined as
$$
    T(R)
    := \big\{ (y_1,y_2,t_1,t_2):\ (y_1,y_2)\in R, t_1\in ( 2^{-k_1},2^{-k_1+1} ], t_2\in ( 2^{-k_2},2^{-k_2+1} ]\big\}.
$$
For brevity, in what follows we will write $\chi_{T(R)}$ for
$\chi_{T(R)}(y_1, y_2, t_1, t_2)$.

Using the reproducing formula, we can write
\begin{align}\label{e2 in section 5.3.3}
    \ \ \ \ f(x_1,x_2)
    &= \int_0^\infty\!\!\int_0^\infty
        \psi(t_1\sqrt{L_1})\psi(t_2\sqrt{L_2})(t_1^2L_1e^{-t_1^2L_1}\otimes t_2^2L_2e^{-t_2^2L_2})(f)(x_1,x_2){dt_1dt_2\over t_1t_2}\\
    &=\int_0^\infty\!\!\int_0^\infty\!\! \int_{X_1}\int_{X_2}
        K_{\psi(t_1\sqrt{L_1})}(x_1,y_1)K_{\psi(t_2\sqrt{L_2})}(x_2,y_2)\nonumber\\
    &\hskip1cm(t_1^2L_1e^{-t_1^2L_1}\otimes t_2^2L_2e^{-t_2^2L_2})(f)(y_1,y_2)d\mu_1(y_1)d\mu_2(y_2){dt_1dt_2\over t_1t_2}\nonumber\\
    &= \sum_{\ell\in\mathbb{Z}}\sum_{R\in B_\ell}  \int_{T(R)} K_{\psi(t_1\sqrt{L_1})}(x_1,y_1)K_{\psi(t_2\sqrt{L_2})}(x_2,y_2)\nonumber\\
    &\hskip1cm(t_1^2L_1e^{-t_1^2L_1}\otimes t_2^2L_2e^{-t_2^2L_2})(f)(y_1,y_2)d\mu_1(y_1)d\mu_2(y_2){dt_1dt_2\over t_1t_2}\nonumber\\
    &=: \sum_{\ell\in\mathbb{Z}}\lambda_\ell a_\ell(x_1,x_2). \nonumber
\end{align}
Here the coefficients $\lambda_\ell$  are defined by
$$
    \lambda_\ell
    := C\bigg\|\bigg( \sum_{R\in B_\ell}  \int_{0}^\infty\!\!\int_{0}^\infty
        \big|(t_1^2L_1e^{-t_1^2L_1}\otimes t_2^2L_2e^{-t_2^2L_2})(f)(y_1,y_2)\big|^2\chi_{T(R)}
        {dt_1dt_2\over t_1t_2}\bigg)^{1/2}\bigg\|_{L^2}\mu(\widetilde{\Omega}_\ell)^{1/2},
$$
Also the functions $a_\ell(x_1,x_2)$ are defined by
\begin{align*}
    a_\ell(x_1,x_2)
    &:={1\over\lambda_\ell}\sum_{R\in B_\ell}  \int_{T(R)} K_{\psi(t_1\sqrt{L_1})}(x_1,y_1)K_{\psi(t_2\sqrt{L_2})}(x_2,y_2)\nonumber\\
    &\hskip1cm(t_1^2L_1e^{-t_1^2L_1}\otimes t_2^2L_2e^{-t_2^2L_2})(f)(y_1,y_2)d\mu_1(y_1)d\mu_2(y_2){dt_1dt_2\over t_1t_2}.
\end{align*}

First, it is easy to verify property (1) in Definition~\ref{def
H1 atom}, since from Lemma~\ref{lemma finite speed} and the
definition of the sets $B_\ell$ and $\widetilde{\Omega}_\ell$,
we obtain that $a_\ell(x_1,x_2)$ is supported in
$\widetilde{\Omega}_\ell$.

Next, we can further write
\begin{eqnarray*}
    a_\ell(x_1,x_2)
    &=& \sum_{\overline{R}\in m(\widetilde{\Omega}_\ell)} a_{\overline{R}}(x_1,x_2),
\end{eqnarray*}
where
\begin{eqnarray*}
    a_{\overline{R}}
    &:=& \sum_{R\in B_\ell, R\subset \overline{R}}{1\over\lambda_\ell}\int_{T(R)} K_{\psi(t_1\sqrt{L_1})}(x_1,y_1)K_{\psi(t_2\sqrt{L_2})}(x_2,y_2)\nonumber\\
    && \hskip1cm(t_1^2L_1e^{-t_1^2L_1}\otimes t_2^2L_2e^{-t_2^2L_2})(f)(y_1,y_2)\,d\mu_1(y_1)d\mu_2(y_2){dt_1dt_2\over t_1t_2}.
\end{eqnarray*}
Then property (i) of (2) in Definition \ref{def H1 atom} holds,
since $a_{\overline{R}}$ can be further written as
$$a_{\overline{R}}= (L_1^N\otimes L_2^N)b_{\overline{R}},  $$ where
\begin{eqnarray*}
    b_{\overline{R}}
    &:=& \sum_{R\in B_\ell, R\subset \overline{R}}{1\over\lambda_\ell}\int_{T(R)}
        t_1^{2N}t_2^{2N}K_{\phi(t_1\sqrt{L_1})}(x_1,y_1)K_{\phi(t_2\sqrt{L_2})}(x_2,y_2)\nonumber\\
    && \hskip1cm(t_1^2L_1e^{-t_1^2L_1}\otimes t_2^2L_2e^{-t_2^2L_2})(f)(y_1,y_2)\,d\mu_1(y_1)d\mu_2(y_2){dt_1dt_2\over t_1t_2}.
\end{eqnarray*}
Next, from Lemma \ref{lemma finite speed}, we obtain that
property (ii) of (2) in Definition \ref{def H1 atom} holds.

We now verify property (iii) of (2). To do so, we write
\[
    \|a_\ell\|_{L^2(X_1\times X_2)}
    = \sup_{h: \|h\|_{L^2(X_1\times X_2)}=1} |\langle a_\ell,h\rangle|.
\]
Then from the definition of $a_\ell$, we have
\begin{eqnarray*}
    \lefteqn{|\langle a_\ell,h\rangle|}\\
    &=& \bigg|\int_{X_1\times X_2} {1\over\lambda_\ell}\sum_{R\in B_\ell}  \int_{T(R)} K_{\psi(t_1\sqrt{L_1})}(x_1,y_1)K_{\psi(t_2\sqrt{L_2})}(x_2,y_2)\nonumber\\
    &&(t_1^2L_1e^{-t_1^2L_1}\otimes t_2^2L_2e^{-t_2^2L_2})(f)(y_1,y_2)d\mu_1(y_1)d\mu_2(y_2){dt_1dt_2\over t_1t_2}\ h(x_1,x_2) d\mu_1(x_1)d\mu_2(x_2) \bigg|\\
    &\leq & {1\over\lambda_\ell}\sum_{R\in B_\ell}  \int_{T(R)} |\psi(t_1\sqrt{L_1})\psi(t_2\sqrt{L_2})(h)(y_1,y_2)|\nonumber\\
    &&\big|(t_1^2L_1e^{-t_1^2L_1}\otimes t_2^2L_2e^{-t_2^2L_2})(f)(y_1,y_2)\big|d\mu_1(y_1)d\mu_2(y_2){dt_1dt_2\over t_1t_2}\\
    &\leq & {1\over\lambda_\ell} \int_{X_1\times X_2}\bigg( \sum_{R\in B_\ell}
        \int_{0}^\infty\!\!\int_{0}^\infty |\psi(t_1\sqrt{L_1})\psi(t_2\sqrt{L_2})(h)(y_1,y_2)|^2\chi_{T(R)}{dt_1dt_2\over t_1t_2} \bigg)^{1/2}\nonumber\\
    &&\bigg( \sum_{R\in B_\ell}  \int_{0}^\infty\!\!\int_{0}^\infty
        \big|(t_1^2L_1e^{-t_1^2L_1}\otimes t_2^2L_2e^{-t_2^2L_2})(f)(y_1,y_2)\big|^2\chi_{T(R)}  {dt_1dt_2\over t_1t_2}\bigg)^{1/2}d\mu_1(y_1)d\mu_2(y_2)\\
    &\leq& {C\over\lambda_\ell}\|h\|_{L^2}\bigg\|\bigg( \sum_{R\in B_\ell}
        \int_{0}^\infty\!\!\int_{0}^\infty \big|(t_1^2L_1e^{-t_1^2L_1}\otimes t_2^2L_2e^{-t_2^2L_2})(f)(y_1,y_2)\big|^2\chi_{T(R)}  {dt_1dt_2\over t_1t_2}\bigg)^{1/2}\bigg\|_{L^2}\\
    &\leq& \mu(\widetilde{\Omega}_\ell)^{-1/2}.
\end{eqnarray*}
In the last inequality, we have used the definition
of~$\lambda_\ell$.

Similarly, from the definition of the function
$b_{\overline{R}}$, we have for each $\sigma_1$, $\sigma_2 \in
\{0, 1, \ldots, N\}$ that
\begin{eqnarray*}
    \lefteqn{\ell(\overline{I})^{-2N}\ell(\overline{J})^{-2N}\|(\ell(\overline{I})^2L_1)^{\sigma_1}\otimes (\ell(\overline{J})^2L_2)^{\sigma_2} b_{\overline{R}}\|_{L^2}}\\
    &&\hskip.2cm=\sup_{h: \|h\|_{L^2}=1}
        \big|\langle \ell(\overline{I})^{-2N}\ell(\overline{J})^{-2N}(\ell(\overline{I})^2L_1)^{\sigma_1}\otimes (\ell(\overline{J})^2L_2)^{\sigma_2} b_{\overline{R}} ,h\rangle\big|\\
    &&\hskip.2cm\leq \sup_{h: \|h\|_{L^2}=1} {C\over\lambda_\ell}\sum_{R\in B_\ell, R\subset \overline{R}}
        \int_{T(R)} |(\ell(\overline{I})^2L_1)^{\sigma_1}\phi(t_1\sqrt{L_1})\otimes(\ell(\overline{J})^2L_2)^{\sigma_2}\phi(t_2\sqrt{L_2})(h)(y_1,y_2)|\nonumber\\
    &&\hskip1cm\big|(t_1^2L_1e^{-t_1^2L_1}\otimes t_2^2L_2e^{-t_2^2L_2})(f)(y_1,y_2)\big|\,d\mu_1(y_1)d\mu_2(y_2){dt_1dt_2\over t_1t_2}.\\
\end{eqnarray*}
As a consequence, using the same approach as in the above
estimates for $a_{\ell}$, we have
\begin{eqnarray*}
    \lefteqn{\sum_{\overline{R}\in m(\widetilde{\Omega}_\ell)}
        \ell(\overline{I})^{-4N}\ell(\overline{J})^{-4N}\|(\ell(\overline{I})^2L_1)^{\sigma_1}\otimes (\ell(\overline{J})^2L_2)^{\sigma_2} b_{\overline{R}}\|_{L^2}^2}\\
    &&\hskip.2cm\leq \sup_{h: \|h\|_{L^2}=1} {C\over\lambda_\ell^2} \sum_{\overline{R}\in m(\widetilde{\Omega}_\ell)}\bigg(\sum_{R\in B_\ell, R\subset \overline{R}}  \int_{T(R)} \\
    &&\hskip1cm
    |(\ell(\overline{I})^2L_1)^{\sigma_1}\phi(t_1\sqrt{L_1})\otimes(\ell(\overline{J})^2L_2)^{\sigma_2}\phi(t_2\sqrt{L_2})(h)(y_1,y_2)|\nonumber\ \ \ \ \  \ \\
    &&\hskip2cm\big|(t_1^2L_1e^{-t_1^2L_1}\otimes t_2^2L_2e^{-t_2^2L_2})(f)(y_1,y_2)\big|\,d\mu_1(y_1)d\mu_2(y_2)\,{dt_1dt_2\over t_1t_2}\bigg)^2\\
    &&\hskip.2cm\leq {C\over\lambda_\ell^2}\bigg\|\bigg( \sum_{R\in B_\ell}
        \int_{0}^\infty\!\!\int_{0}^\infty \big|(t_1^2L_1e^{-t_1^2L_1}\otimes t_2^2L_2e^{-t_2^2L_2})(f)(y_1,y_2)\big|^2\chi_{T(R)}\,  {dt_1dt_2\over t_1t_2}\bigg)^{1/2}\bigg\|_{L^2}^2\\
    &&\hskip.2cm\leq \mu(\widetilde{\Omega}_\ell)^{-1}.
\end{eqnarray*}
The last inequality follows from the definition
of~$\lambda_\ell$.

Combining the above estimate and the estimate for $a_\ell$, we
have established property (iii) of (2) in Definition~\ref{def
H1 atom}. Thus, each $a_\ell$ is an $(H^1_{L_1,L_2},2,N)$-atom.

\smallskip
To see that the atomic decomposition $\sum_\ell \lambda_\ell
a_\ell$ converges to $f$ in the $L^2(X_1\times X_2)$ norm, we
only need to show that $\|\sum_{|\ell|>G} \lambda_\ell
a_\ell\|_{L^2(X_1\times X_2)}\rightarrow 0$ as $G$ tends to
infinity. To see this, first note that
$$
    \Big\|\sum_{|\ell|>G} \lambda_\ell a_\ell\Big\|_{L^2(X_1\times X_2)}
    =\sup_{h:\, \|h\|_{L^2(X_1\times X_2)=1 }}
    \Big|\big\langle \sum_{|\ell|>G} \lambda_\ell a_\ell, h\big\rangle\Big|.
$$
Next, we have
\begin{eqnarray*}
    \lefteqn{\Big|\big\langle \sum_{|\ell|>G} \lambda_\ell a_\ell, h\big\rangle\Big|}\\
    &&\hskip.2cm=\bigg|\int_{X_1\times X_2} \sum_{|\ell|>G} \sum_{R\in B_\ell}  \int_{T(R)} K_{\psi(t_1\sqrt{L_1})}(x_1,y_1)K_{\psi(t_2\sqrt{L_2})}(x_2,y_2)\nonumber\\
    &&\hskip1.2cm(t_1^2L_1e^{-t_1^2L_1}\otimes t_2^2L_2e^{-t_2^2L_2})(f)(y_1,y_2)d\mu_1(y_1)d\mu_2(y_2){dt_1dt_2\over t_1t_2}\ h(x_1,x_2) d\mu_1(x_1)d\mu_2(x_2) \bigg|\\
    &&\hskip.2cm\leq \int_{X_1\times X_2}\bigg( \sum_{|\ell|>G}\sum_{R\in B_\ell}
        \int_{0}^\infty\!\!\int_{0}^\infty |\psi(t_1\sqrt{L_1})\psi(t_2\sqrt{L_2})(h)(y_1,y_2)|^2\chi_{T(R)}{dt_1dt_2\over t_1t_2} \bigg)^{1\over2}\nonumber\\
    &&\hskip.58cm\bigg( \sum_{|\ell|>G}\sum_{R\in B_\ell}\!
        \int_{0}^\infty\!\!\!\int_{0}^\infty \!\!\big|(t_1^2L_1e^{-t_1^2L_1}\otimes t_2^2L_2e^{-t_2^2L_2})(f)(y_1,y_2)\big|^2\chi_{T(R)}  {dt_1dt_2\over t_1t_2}\bigg)^{1\over2}\!d\mu_1(y_1)d\mu_2(y_2)\\
    &&\hskip.2cm\leq C\|h\|_{L^2}\bigg\|\bigg( \sum_{|\ell|>G}\sum_{R\in B_\ell}
        \int_{0}^\infty\!\!\int_{0}^\infty \big|(t_1^2L_1e^{-t_1^2L_1}\otimes t_2^2L_2e^{-t_2^2L_2})(f)(y_1,y_2)\big|^2\chi_{T(R)}  {dt_1dt_2\over t_1t_2}\bigg)^{1\over2}\bigg\|_{L^2}\\
    &&\hskip.2cm\rightarrow 0
\end{eqnarray*}
as $G$ tends to $\infty$, since $\|Sf\|_2 < \infty$.

This implies that $f = \sum_\ell \lambda_\ell a_\ell$ in the
sense of $L^2(X_1\times X_2)$.

\medskip
Next, we verify the estimate for the series
$\sum_\ell|\lambda_\ell|$. To deal with this, we claim that for
each $\ell\in\mathbb{Z}$,
\begin{eqnarray*}
    \sum_{R\in B_\ell}\int_{T(R)} \big|(t_1^2L_1e^{-t_1^2L_1}\otimes t_2^2L_2e^{-t_2^2L_2})(f)(y_1,y_2)\big|^2 d\mu_1(y_1)d\mu_2(y_2){dt_1dt_2\over t_1t_2}
    \leq C2^{2(\ell+1)}\mu(\widetilde{\Omega}_\ell).
\end{eqnarray*}

\noindent First we note that
\begin{equation*}
    \int_{\widetilde{\Omega}_\ell\backslash \Omega_{\ell+1}} (Sf)^2(x_1,x_2)\, d\mu_1(x_1)d\mu_2(x_2)
    \leq  2^{2(\ell+1)}\mu(\widetilde{\Omega}_\ell).
\end{equation*}
Also we point out that
\begin{eqnarray*}
    && \int_{\widetilde{\Omega}_\ell\backslash \Omega_{\ell+1}} (Sf)^2(x_1,x_2)\, d\mu_1(x_1)d\mu_2(x_2)\\
    &&= \int_{\widetilde{\Omega}_\ell\backslash \Omega_{\ell+1}} \int_{\Gamma_1(x_1) }\int_{\Gamma_2(x_2) }\\
    &&\hskip1cm\big|\big(t_1^2L_1e^{-t_1^2L_1}\otimes
    t_2^2L_2e^{-t_2^2L_2}\big)f(y_1,y_2)\big|^2{d\mu_1(y_1)d\mu_2(y_2)\ \! dt_1dt_2\over t_1V(x_1,t_1) t_2V(x_2,t_2)}\, d\mu_1(x_1)d\mu_2(x_2)\\
    &&= \int_0^\infty\!\!\int_0^\infty\!\!\int_{X_1\times X_2}\big|\big(t_1^2L_1e^{-t_1^2L_1}\otimes
    t_2^2L_2e^{-t_2^2L_2}\big)(f)(y_1,y_2)\big|^2\\
    &&\hskip1cm\times\mu(\{(x_1,x_2)\in\widetilde{\Omega}_\ell\backslash \Omega_{\ell+1}:\, d_1(x_1,y_1)<t_1,d_2(x_2,y_2)<t_2\})
        {d\mu_1(y_1)d\mu_2(y_2)\ \! dt_1dt_2\over t_1V(x_1,t_1) t_2V(x_2,t_2)} \\
    &&\geq \sum_{R\in B_\ell}\int_{T(R)}\big|\big(t_1^2L_1e^{-t_1^2L_1}\otimes
    t_2^2L_2e^{-t_2^2L_2}\big)(f)(y_1,y_2)\big|^2\\
    &&\hskip1cm\times\mu(\{(x_1,x_2)\in\widetilde{\Omega}_\ell\backslash \Omega_{\ell+1}:\, d_1(x_1,y_1)<t_1,d_2(x_2,y_2)<t_2\})
        {d\mu_1(y_1)d\mu_2(y_2)\ \! dt_1dt_2\over t_1V(x_1,t_1) t_2V(x_2,t_2)} \\
    &&\geq C\sum_{R\in B_\ell}\int_{T(R)} \big|(t_1^2L_1e^{-t_1^2L_1}\otimes t_2^2L_2e^{-t_2^2L_2})(f)(y_1,y_2)\big|^2
        d\mu_1(y_1)d\mu_2(y_2){dt_1dt_2\over t_1t_2},
\end{eqnarray*}
where the last inequality follows from the definition of $B_\ell$. This shows that the claim holds.

As a consequence, we have
\begin{eqnarray*}
    &&\sum_\ell|\lambda_\ell|\\
    &&\leq C\!\sum_\ell\!\bigg\|\bigg( \sum_{R\in B_\ell}
        \int_{0}^\infty\!\!\int_{0}^\infty \big|(t_1^2L_1e^{-t_1^2L_1}\otimes t_2^2L_2e^{-t_2^2L_2})(f)(y_1,y_2)\big|^2\chi_{T(R)}
        {dt_1dt_2\over t_1t_2}\bigg)^{1/2}\bigg\|_{L^2}\mu(\widetilde{\Omega}_\ell)^{1/2}\\
    &&\leq C\!\sum_\ell\!\bigg(\!\sum_{R\in B_\ell}\int_{T(R)}\! \big|(t_1^2L_1e^{-t_1^2L_1}\otimes t_2^2L_2e^{-t_2^2L_2})(f)(y_1,y_2)\big|^2
        d\mu_1(y_1)d\mu_2(y_2){dt_1dt_2\over t_1t_2}\bigg)^{1/2}\!\!\!\mu(\widetilde{\Omega}_\ell)^{1/2}\\
    &&\leq C\sum_\ell2^{\ell+1}\mu(\widetilde{\Omega}_\ell)\leq C\sum_\ell2^{\ell}\mu(\Omega_\ell)\\
    &&\leq C\|Sf\|_{L^1(X_1\times X_2)}\\
    &&=C\|f\|_{H^1_{L_1,L_2}(X_1\times X_2)}.
\end{eqnarray*}

Therefore,
$$
    \|f\|_{\mathbb{H}_{L_1,L_2,at,N}^1( {X_1\times X_2} )}
    \leq C\|f\|_{H_{L_1,L_2}^1( {X_1\times X_2} )},
$$
which completes the proof of Proposition~\ref{prop-product H-SL
subset H-at}.
\end{proof}

Step 2 is now complete. This concludes the proof of Theorem
\ref{theorem of Hardy space atomic decom}.
\end{proof}

\section{Calder\'on--Zygmund decomposition and interpolation
on $H_{L_1, L_2}^p(X_1\times X_2)$}
\setcounter{equation}{0}
\label{sec:CZ_decomposition_interpolation}

In this section, we provide the proofs of the
Calder\'on--Zygmund decomposition (Theorem~\ref{theorem C-Z
decomposition for Hp}) and the interpolation theorem
(Theorem~\ref{theorem interpolation Hp}) on the Hardy spaces
$H_{L_1\times L_2}^p(X_1\times X_2)$.

%
%
%

\begin{proof}[Proof of Theorem \ref{theorem C-Z decomposition for Hp}]
By density, we may assume that $f\in H_{L_1,L_2}^p(X_1\times
X_2)\cap H^2(X_1\times X_2) $. Let $\alpha>0$ and set
$\Omega_\ell:=\{(x_1,x_2)\in X_1\times X_2:\,
Sf(x_1,x_2)>\alpha2^\ell \}$, $\ell\geq0$. Set
$$ B_0:=\Big\{ R=I_{\alpha_1}^{k_1}\times I_{\alpha_1}^{k_1}:\, \mu(R\cap \Omega_0)<{1\over 2A_0}\mu(R) \Big\} $$
and
$$ B_\ell:=\Big\{ R=I_{\alpha_1}^{k_1}\times I_{\alpha_1}^{k_1}:\, \mu(R\cap \Omega_{\ell-1})\geq{1\over 2A_0}\mu(R), \mu(R\cap \Omega_\ell)<{1\over 2A_0}\mu(R) \Big\}  $$
for $\ell\geq1$.

By using the reproducing formula and the decomposition (\ref{e2
in section 5.3.3}) as in the proof of
Proposition~\ref{prop-product H-SL subset H-at}, we have
\begin{eqnarray*}
f(x_1,x_2)
&=& \sum_{\ell\in\mathbb{Z}}\sum_{R\in B_\ell}  \int_{T(R)} K_{\psi(t_1\sqrt{L_1})}(x_1,y_1)K_{\psi(t_2\sqrt{L_2})}(x_2,y_2)\nonumber\\
&&\hskip1cm(t_1^2L_1e^{-t_1^2L_1}\otimes t_2^2L_2e^{-t_2^2L_2})(f)(y_1,y_2)\,d\mu_1(y_1)d\mu_2(y_2)\,{dt_1dt_2\over t_1t_2}\nonumber\\
&=& g(x_1,x_2)+b(x_1,x_2),
\end{eqnarray*}
where
\begin{align*}
g(x_1,x_2)
&:= \sum_{R\in B_0}  \int_{T(R)} K_{\psi(t_1\sqrt{L_1})}(x_1,y_1)K_{\psi(t_2\sqrt{L_2})}(x_2,y_2)\nonumber\\
&\hskip1cm(t_1^2L_1e^{-t_1^2L_1}\otimes t_2^2L_2e^{-t_2^2L_2})(f)(y_1,y_2)d\mu_1(y_1)d\mu_2(y_2){dt_1dt_2\over t_1t_2}
\end{align*}
and
\begin{align*}
b(x_1,x_2)
&:= \sum_{\ell>1}\sum_{R\in B_\ell}  \int_{T(R)} K_{\psi(t_1\sqrt{L_1})}(x_1,y_1)K_{\psi(t_2\sqrt{L_2})}(x_2,y_2)\nonumber\\
&\hskip1cm(t_1^2L_1e^{-t_1^2L_1}\otimes t_2^2L_2e^{-t_2^2L_2})(f)(y_1,y_2)d\mu_1(y_1)d\mu_2(y_2){dt_1dt_2\over t_1t_2}.
\end{align*}

As for $g$, by writing $\|g\|_{L^2(X_1\times X_2)}=\sup_{h:\, \|h\|_{L^2}=1 }|\langle g,h\rangle|$, and noting that
\begin{align*}
|\langle g,h\rangle|
&= \Big|\sum_{R\in B_0}  \int_{T(R)} \psi(t_1\sqrt{L_1})\psi(t_2\sqrt{L_2})(h)(y_1,y_2)\nonumber\\
&\hskip1cm (t_1^2L_1e^{-t_1^2L_1}\otimes t_2^2L_2e^{-t_2^2L_2})(f)(y_1,y_2)d\mu_1(y_1)d\mu_2(y_2){dt_1dt_2\over t_1t_2}\Big|\\
&\leq C\|h\|_{L^2} \bigg(\sum_{R\in B_0}  \int_{T(R)}\big|(t_1^2L_1e^{-t_1^2L_1}\otimes t_2^2L_2e^{-t_2^2L_2})(f)(y_1,y_2)\big|^2d\mu_1(y_1)d\mu_2(y_2){dt_1dt_2\over t_1t_2}\bigg)^{1/2},
\end{align*}
we have
\begin{eqnarray*}
\|g\|_{L^2}
\leq C\bigg(\sum_{R\in B_0}  \int_{T(R)}\big|(t_1^2L_1e^{-t_1^2L_1}\otimes t_2^2L_2e^{-t_2^2L_2})(f)(y_1,y_2)\big|^2d\mu_1(y_1)d\mu_2(y_2){dt_1dt_2\over t_1t_2}\bigg)^{1/2}.
\end{eqnarray*}

Also note that
\begin{eqnarray*}
&& \int_{Sf(x_1,x_2)\leq \alpha} Sf(x_1,x_2)^2 d\mu_1(x_1)d\mu_2(x_2)\\
&&= \int_{\Omega_{0}^c} \int_{\Gamma_1(x_1) }\int_{\Gamma_2(x_2) }\big|\big(t_1^2L_1e^{-t_1^2L_1}\otimes
t_2^2L_2e^{-t_2^2L_2}\big)(f)(y_1,y_2)\big|^2\\
&&\hskip2cm{d\mu_1(y_1)d\mu_2(y_2)\ \! dt_1dt_2\over t_1V(x_1,t_1) t_2V(x_2,t_2)} d\mu_1(x_1)d\mu_2(x_2)\\
&&= \int_0^\infty\!\!\int_0^\infty\!\!\int_{X_1\times X_2}\big|\big(t_1^2L_1e^{-t_1^2L_1}\otimes
t_2^2L_2e^{-t_2^2L_2}\big)(f)(y_1,y_2)\big|^2\\
&&\hskip1cm\times\mu(\{(x_1,x_2)\in \Omega_{0}^c:\, d_1(x_1,y_1)<t_1,d_2(x_2,y_2)<t_2\})  \,{d\mu_1(y_1)d\mu_2(y_2)  dt_1dt_2\over t_1V(x_1,t_1) t_2V(x_2,t_2)} \\
&&\geq C\sum_{R\in B_\ell}\int_{T(R)} \big|(t_1^2L_1e^{-t_1^2L_1}\otimes t_2^2L_2e^{-t_2^2L_2})(f)(y_1,y_2)\big|^2\, d\mu_1(y_1)d\mu_2(y_2){dt_1dt_2\over t_1t_2}.
\end{eqnarray*}

As a consequence, we have
\begin{eqnarray*}
\|g\|_{L^2}^2
\leq C\int_{Sf(x_1,x_2)\leq \alpha} Sf(x_1,x_2)^2 d\mu_1(x_1)d\mu_2(x_2).
\end{eqnarray*}

It remains to estimate $\|b\|_{H^1_{L_1,L_2}(X_1\times X_2)}$.
From the definition of the function $b(x_1,x_2)$, we have
\begin{eqnarray*}
&&\|b\|_{H^1_{L_1,L_2}(X_1\times X_2)}\\
&&\leq \sum_{\ell\geq1} \bigg\| \sum_{R\in B_\ell}  \int_{T(R)} K_{\psi(t_1\sqrt{L_1})}(x_1,y_1)K_{\psi(t_2\sqrt{L_2})}(x_2,y_2)\nonumber\\
&&\hskip2cm(t_1^2L_1e^{-t_1^2L_1}\otimes t_2^2L_2e^{-t_2^2L_2})(f)(y_1,y_2)
\,d\mu_1(y_1)d\mu_2(y_2)\,{dt_1dt_2\over t_1t_2} \bigg\|_{H^1_{L_1,L_2}(X_1\times X_2)}.
\end{eqnarray*}

From the proof of Proposition \ref{prop-product H-SL subset
H-at}, we see that, for $\ell\geq1$,
\begin{eqnarray*}
&& {1\over \lambda_\ell}\sum_{R\in B_\ell}  \int_{T(R)} K_{\psi(t_1\sqrt{L_1})}(x_1,y_1)K_{\psi(t_2\sqrt{L_2})}(x_2,y_2)\nonumber\\
&&\hskip2cm(t_1^2L_1e^{-t_1^2L_1}\otimes t_2^2L_2e^{-t_2^2L_2})(f)(y_1,y_2)d\mu_1(y_1)d\mu_2(y_2){dt_1dt_2\over t_1t_2}
\end{eqnarray*}
is an $(H^1_{L_1,L_2},2,N)$-atom, which we denote it by
$a_\ell$, where $\lambda_\ell$ is the coefficient of $a_\ell$
defined by
$$
    \lambda_\ell
    :=C\bigg\|\bigg( \sum_{R\in B_\ell}
        \int_{0}^\infty\!\!\!\int_{0}^\infty \big|(t_1^2L_1e^{-t_1^2L_1}\otimes t_2^2L_2e^{-t_2^2L_2})(f)(y_1,y_2)\big|^2\chi_{T(R)}
        {dt_1dt_2\over t_1t_2}\bigg)^{1/2}\bigg\|_{L^2}\mu(\widetilde{\Omega}_\ell)^{1/2}.
$$
Here we point out that the support of $a_\ell$ is
$\widetilde{\Omega}:=\{(x_1,x_2)\in X_1\times X_2:\,
\mathcal{M}_s(\chi_{\Omega})(x_1,x_2)>1/(2A_0)\}$, where
$\Omega_\ell=\{(x_1,x_2)\in X_1\times X_2:\,
Sf(x_1,x_2)>\alpha2^\ell \}$. Hence, following the same
argument in the proof of Proposition \ref{prop-product H-SL
subset H-at}, we obtain that
$$ |\lambda_\ell|\leq C\alpha2^\ell\mu(\Omega_\ell). $$
Moreover, Lemma \ref{leAtom} implies that $\|a_\ell\|_{H^1_{L_1,L_2}(X_1\times X_2)}\leq C$,
where $C$ is a positive constant independent of $a_\ell$.

As a consequence, we have
\begin{align*}
    \lefteqn{\|b\|_{H^1_{L_1,L_2}(X_1\times X_2)}}\hspace{1cm}\\
    &\leq \sum_{\ell\geq1} |\lambda_\ell|\bigg\| {1\over \lambda_\ell}\sum_{R\in B_\ell}
        \int_{T(R)} K_{\psi(t_1\sqrt{L_1})}(x_1,y_1)K_{\psi(t_2\sqrt{L_2})}(x_2,y_2)\nonumber\\
    &\hskip2cm(t_1^2L_1e^{-t_1^2L_1}\otimes t_2^2L_2e^{-t_2^2L_2})(f)(y_1,y_2)
        d\mu_1(y_1)d\mu_2(y_2){dt_1dt_2\over t_1t_2} \bigg\|_{H^1_{L_1,L_2}(X_1,X_2)}\\
    &\leq C \sum_{\ell\geq1} \alpha2^\ell\mu(\Omega_\ell)\\
    &\leq C \int_{Sf(x_1,x_2)>\alpha} Sf(x_1,x_2) d\mu_1(x_1)d\mu_2(x_2) \\
    &\leq C \alpha^{1-p} \int_{Sf(x_1,x_2)>\alpha} Sf(x_1,x_2)^p d\mu_1(x_1)d\mu_2(x_2) \\
    &\leq C \alpha^{1-p} \|f\|_{H^p_{L_1,L_2}(X_1,X_2)}. \hfill\qedhere
\end{align*}
\end{proof}

We are now ready to prove Theorem \ref{theorem interpolation Hp}.

\begin{proof}[Proof of Theorem \ref{theorem interpolation Hp}]
Suppose that $T$  is bounded from $H_{L_1,L_2}^{1}(X_1\times
X_2)$ to $L^{1}(X_1\times X_2)$ and  from
$H_{L_1,L_2}^{2}(X_1\times X_2)$ to  $L^{2}(X_1\times X_2)$.
For any given $\lambda>0$ and $f\in H_{L_1,L_2}^p(X_1\times
X_2)$, by the Calder\'on--Zygmund decomposition,
$$f(x_1,x_2)=g(x_1,x_2)+b(x_1,x_2)$$ with
$$
    \|g\|^{2}_{H_{L_1,L_2}^{2}(X_1\times X_2)}
    \le C\lambda^{2-p}\|f\|_{H_{L_1,L_2}^p(X_1\times X_2)}^p\,\,\,\ {\rm and\ }\,\,
    \|b\|_{H_{L_1,L_2}^{1}(X_1\times X_2)}
    \le C\lambda^{1-p}\|f\|_{H_{L_1,L_2}^p(X_1\times X_2)}^p.
$$

Moreover, we have already proved the estimates
$$\|g\|^{2}_{H_{L_1,L_2}^{2}(X_1\times X_2)}\le C\int_{Sf(x_1,x_2)\le \alpha}Sf(x_1,x_2)^{2}\,d\mu_1(x_1)d\mu_2(x_2)$$ and
$$\|b\|^{1}_{H_{L_1,L_2}^{1}(X_1\times X_2)}\le C\int_{Sf(x_1,x_2)> \alpha}Sf(x_1,x_2)\,d\mu_1(x_1)d\mu_2(x_2),$$
which imply that
\begin{align*}
    \|Tf\|^p_{L^p(X_1\times X_2)}&=   p\int_0^\infty \alpha^{p-1} \mu(\{(x_1,x_2): |Tf(x_1,x_2)|>\alpha\})d \alpha\\
    &\le  p\int_0^\infty \alpha^{p-1}\mu(\{(x_1,x_2): |Tg(x_1,x_2)|>\alpha/2\})d\alpha\\
    &\hskip.5cm+p\int_0^\infty \alpha^{p-1}\mu(\{(x_1,x_2): |Tb(x_1,x_2)|>\alpha/2\})d\alpha\\
    &\le  p\int_0^\infty \alpha^{p-2-1}\int_{Sf(x_1,x_2)\le \alpha}Sf(x_1,x_2)^{2}\,d\mu_1(x_1)d\mu_2(x_2) d\alpha\\
    &\hskip.5cm+p\int_0^\infty \alpha^{p-1-1}\int_{Sf(x_1,x_2)>\alpha}Sf(x_1,x_2)\,d\mu_1(x_1)d\mu_2(x_2) d\alpha\\
    &\le  C\|f\|^p_{H_{L_1,L_2}^p(X_1\times X_2)}
\end{align*}
for any $1<p<2$. Hence,  $T$ is bounded from $H_{L_1,L_2}^p(X_1\times X_2)$ to $L^p(X_1\times X_2)$.
\end{proof}

\section{The relationship between   $H^p_{L_1,
L_2}({X_1\times X_2})$ and   $L^p({X_1\times X_2})$}
\label{sec:HpandLp}


Before proving our main result Theorem \ref{theorem-Hp-Lp}, we
point out that Theorem \ref{theorem-Hp-Lp} is an extension of
Theorem~4.19 in Uhl's PhD thesis~\cite[Section~4.4]{U}. In
Theorem~4.19 (\cite[Section~4.4]{U}), to obtain the coincidence
of the Hardy space and the Lebesgue space, Uhl assumed that $L$
is an injective operator on~$L^2(X)$. Here we note that if $L$
satisfies the {\it generalized Gaussian estimates}~$({\rm
GGE}_{p_0})$ for some $1\leq p_0<2$, then $L$ is injective.
This result seems new and leads to the fact that $H^2(X_1\times
X_2)=L^2(X_1\times X_2)$ (see the proof of
Theorem~\ref{theorem-Hp-Lp} in this section).

\begin{theorem}\label{theorem injective}
    If $L$ satisfies the generalized Gaussian estimates~$({\rm
    GGE}_{p_0})$ for some $p_0$ with $1 \leq p_0 < 2$, then the
    operator $L$ is injective on $L^2(X)$.
\end{theorem}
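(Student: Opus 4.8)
The plan is to reduce the injectivity of $L$ to a statement about fixed points of the heat semigroup, and then to exploit the ultracontractive decay encoded in $({\rm GGE}_{p_0})$ together with the standing hypothesis $\mu(X)=+\infty$. Since $L$ is non-negative and self-adjoint, spectral theory shows that $f\in N(L)$ if and only if $e^{-tL}f=f$ for every $t>0$; indeed $Lf=0$ means the spectral measure of $f$ is concentrated at $\lambda=0$, and conversely $e^{-tL}f=f$ forces $\int e^{-t\lambda}\,d\|E_\lambda f\|^2=\|f\|_2^2$ for all $t$, which places the measure at the origin. Thus it suffices to show that any $f\in L^2(X)$ with $e^{-tL}f=f$ for all $t>0$ must vanish. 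The key quantity I would track is the \emph{local} $L^{p_0'}$ norm of $f$ on the ball $B=B(x_0,t^{1/2})$, which, because $f=e^{-tL}f$, equals $\|P_{B}e^{-tL}f\|_{L^{p_0'}(X)}$.

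To estimate this I would decompose $f$ into annuli centred at $x_0$: write $f=\sum_{j\ge0}f_j$, with $f_0=f\,\chi_{2B}$ and $f_j=f\,\chi_{B(x_0,2^{j+1}t^{1/2})\setminus B(x_0,2^{j}t^{1/2})}$ for $j\ge1$. Covering the support of each $f_j$ by a bounded-overlap family of balls $B(y_i,t^{1/2})$ and applying $({\rm GGE}_{p_0})$ to each piece $P_{B}e^{-tL}P_{B(y_i,t^{1/2})}f_j$, the Gaussian factor contributes $e^{-c4^{j}}$ since $d(x_0,y_i)\approx 2^{j}t^{1/2}$. Using H\"older's inequality on each ball to pass from $L^{p_0}$ to $L^2$, the doubling property to compare $V(y_i,t^{1/2})$ with $V(x_0,t^{1/2})$, and Cauchy--Schwarz together with the bounded overlap to sum over $i$, the polynomial factors $2^{jn/p_0}$ that accumulate are dominated by $e^{-c4^{j}}$. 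After summing the resulting series in $j$, this yields the central estimate
\[
  \|f\|_{L^{p_0'}(B(x_0,t^{1/2}))}
  \le C\,V(x_0,t^{1/2})^{-(1/p_0-1/2)}\,\|f\|_{L^2(X)},
\]
valid for every $x_0\in X$ and every $t>0$, with the crucial feature that the exponent $1/p_0-1/2$ is strictly positive because $p_0<2$.

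Finally I would let $t\to\infty$. Since $\mu(X)=+\infty$, the balls $B(x_0,t^{1/2})$ exhaust $X$ and $V(x_0,t^{1/2})\to\infty$, so the right-hand side above tends to $0$; on the other hand, if $f$ were nonzero on a set of positive measure then $\|f\|_{L^{p_0'}(B(x_0,t^{1/2}))}$ would be bounded below by a fixed positive constant for all large $t$. This contradiction forces $f=0$, establishing injectivity. I expect the main obstacle to be the bookkeeping in the displayed local estimate: one must check that after applying $({\rm GGE}_{p_0})$ the exponent of $V(x_0,t^{1/2})$ collapses exactly to $-(1/p_0-1/2)$, and that the accumulated powers of $2^{j}$ arising from the number of covering balls, the doubling volume comparison, and the $L^{p_0}$--$L^2$ H\"older step are all absorbed by the decay $e^{-c4^{j}}$. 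The role of $\mu(X)=+\infty$ is essential, since it is precisely what rules out nonzero fixed points of the semigroup such as constants.
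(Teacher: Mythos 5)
Your proposal is correct and follows essentially the same route as the paper: reduce $L\phi=0$ to the semigroup fixed-point identity $\phi=e^{-tL}\phi$, convert $({\rm GGE}_{p_0})$ into annular $L^2\to L^{p_0'}$ off-diagonal estimates with decay $e^{-c4^j}$ dominating the doubling factors, and let $t\to\infty$ using $\mu(X)=+\infty$ so that the volume factor $V(x_0,t^{1/2})^{-(1/p_0-1/2)}$ forces $\|\phi\|_{p_0'}=0$. The only differences are cosmetic: you derive the annular estimate by hand (covering, H\"older, bounded overlap) where the paper cites it as Lemma~2.6 of Uhl's thesis, and you use the spectral measure for the fixed-point reduction where the paper uses the identity $e^{-tL}-I=-\int_0^t Le^{-sL}\,ds$.
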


\begin{proof}
Take $\phi\in L^2(X)$ with $L\phi = 0$. From the functional
calculus,
$$  e^{-tL}-I = \int_0^t {\partial\over \partial s} e^{-sL} ds
= -\int_0^t Le^{-sL} ds. $$
Then we have
$$ (e^{-tL}-I)(\phi)= -\int_0^t Le^{-sL} ds (\phi) =0,$$
which implies that
\begin{eqnarray}\label{identity}
\phi=e^{-tL}\phi
\end{eqnarray}
holds for all $t>0$.  Note that \eqref{identity} is proved in
\cite[page 9]{HLMMY}.

Next, as shown in Lemma~2.6 of~\cite{U}, the {\it generalized
Gaussian estimates}~$({\rm GGE}_{p_0})$ imply the following
$L^2\to L^{p'_0}$ off-diagonal estimates:
\begin{equation}\label{eqn:2p0estimate}
    \|P_{B(x,\sqrt{t})}e^{-tL}P_{C_j(x,\sqrt{t})}\|_{2\to p_0'}
    \leq CV(x,\sqrt{t})^{-(1/2-1/p_0')}e^{-c4^j},
\end{equation}
where $C_j(x,r) := B(x,2^jr)\setminus B(x,2^{j-1}r)$ for $j \geq 1$
and $C_0(x,r) = B(x,r)$.

As a consequence of Fatou's lemma, \eqref{identity} and
\eqref{eqn:2p0estimate}, we have that
\begin{align*}
    \|\phi\|_{p_0'}&\leq\lim_{t\to \infty}\|P_{B(x,\sqrt{t})}\phi\|_{p_0'}
    =\lim_{t\to \infty}\|P_{B(x,\sqrt{t})}e^{-tL}\phi\|_{p_0'}\\
    &\leq \lim_{t\to \infty} \sum_{j=0}^\infty\|P_{B(x,\sqrt{t})}e^{-tL}P_{C_j(x,\sqrt{t})} \phi\|_{p_0'}\\
    &\leq \lim_{t\to \infty} \sum_{j=0}^\infty C V(x,\sqrt{t})^{-(1/2-1/p_0')}e^{-c4^j}\|\phi\|_{2}\\
    &\leq \lim_{t\to \infty}CV(x,\sqrt{t})^{1/p_0'-1/2}\|\phi\|_{2}\\
    &=0.
\end{align*}
Here in the final step we have used the fact that
$\mu(X)=\infty$. Thus, we obtain that $\phi = 0$ a.e. This
completes the proof of Theorem~\ref{theorem injective}.
\end{proof}

Next, we give a vector-valued version of a theorem about the
area function associated with an operator $L$ in the
one-parameter setting.

Suppose $L$ is a non-negative self-adjoint operator defined on
$L^2(X;H)$, where $H$ is a Hilbert space with a norm
$|\cdot|_H$. Moreover, assume that $L$ satisfies the {\it
generalized Gaussian estimates}  $({\rm GGE}_{p_0})$
for some $p_0$ with $1\leq p_0< 2$.

We now define an area function $S_H:
L^2(X;H)\rightarrow L^2(X)$ associated with $L$ by
\begin{eqnarray*}
    \hskip.7cm S_Hf(x)
    := \bigg(\int_{\Gamma(x) }\big|\big(
        t^2Le^{-t^2L}\big)f(y)\big|_H^2\
        {d\mu(y) \ \! dt\over tV(x,t)}\bigg)^{1/2}.
   \end{eqnarray*}
Then we prove the following boundedness result for $S_H$.

\begin{theorem}\label{theorem vector area function}
    Suppose that $L$ is a non-negative self-adjoint
    operator defined on $L^2(X;H)$ satisfying the generalized
    Gaussian estimates  $({\rm GGE}_{p_0})$ for some
    $p_0\in[1,2)$. Then there exists a positive constant $C$ such
    that
    \begin{eqnarray}\label{boundedness of vector area function}
    \|S_Hf\|_{L^p(X)}\leq C\|f\|_{L^p(X;H)}
    \end{eqnarray}
    for all $p\in (p_0,p_0')$ and all $f\in L^p(X;H)\cap L^2(X;H)$.
\end{theorem}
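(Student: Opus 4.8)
The plan is to recognise $S_H$ as the norm of a Hilbert-space-valued operator built from the heat semigroup and then to run the Calder\'on--Zygmund theory for non-integral operators, following Blunck--Kunstmann and Uhl's one-parameter argument~\cite{U}; the point is that every scalar estimate there survives verbatim once $|\cdot|$ is replaced by $|\cdot|_H$, since $({\rm GGE}_{p_0})$ and the off-diagonal estimate~\eqref{eqn:2p0estimate} are insensitive to the target Hilbert space. Write $Q_t := t^2Le^{-t^2L}$, set $\mathbb{H} := L^2((0,\infty),dt/t;H)$, and define the linear operator $\mathcal{Q}\colon f\mapsto (Q_tf)_{t>0}$, so that the vertical square function $g_Hf := |\mathcal{Q}f|_{\mathbb{H}}$ satisfies $\|g_Hf\|_{L^p(X)}=\|\mathcal{Q}f\|_{L^p(X;\mathbb{H})}$. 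A Fubini computation using doubling gives $\int_X(S_Hf)^2\,d\mu\simeq\int_X(g_Hf)^2\,d\mu$, and more generally $\int_X(S_Hf)^2w\,d\mu\le C\int_X(g_Hf)^2\,\mathcal{M}w\,d\mu$ for every $w\ge0$, where $\mathcal{M}$ is the Hardy--Littlewood maximal operator; together with the standard Fefferman--Stein comparison this reduces~\eqref{boundedness of vector area function} to the bound $\|\mathcal{Q}f\|_{L^p(X;\mathbb{H})}\le C\|f\|_{L^p(X;H)}$ for $p\in(p_0,p_0')$. The endpoint $p=2$ is immediate from the spectral theorem: $\|\mathcal{Q}f\|_{L^2(X;\mathbb{H})}^2=\int_0^\infty\|Q_tf\|_{L^2(X;H)}^2\,dt/t\le C\|f\|_{L^2(X;H)}^2$ because $\int_0^\infty|t^2\lambda e^{-t^2\lambda}|^2\,dt/t$ is bounded uniformly in $\lambda\ge0$.

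For $p_0<p<2$ I would apply the weak-type $(p_0,p_0)$ criterion of Blunck--Kunstmann to the sublinear operator $\mathcal{Q}$. Taking the approximation $A_r:=I-(I-e^{-r^2L})^M$ with $M$ large, the two hypotheses of that criterion reduce to $L^{p_0}\to L^2$ off-diagonal (Gaffney-type) bounds, with constants summable against the doubling weight $2^{jn}$, for the families $\mathcal{Q}(I-A_r)$ and $A_r$ over the annuli $C_j(x,r)$. These are exactly what $({\rm GGE}_{p_0})$ yields: the estimate~\eqref{eqn:2p0estimate} provides $L^2\to L^{p_0'}$, hence by self-adjointness $L^{p_0}\to L^2$, off-diagonal decay for $e^{-sL}$, and one transfers this decay through the functions $t^2\lambda e^{-t^2\lambda}$ and $(1-e^{-r^2\lambda})^M$ either by writing them as superpositions of the semigroup or through the finite-propagation-speed representation of Lemma~\ref{lemma finite speed}, splitting the defining $t$-integral of $\mathcal{Q}$ at the scale $t\sim r$. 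Interpolation of the resulting weak-type $(p_0,p_0)$ bound with the $L^2$ bound then gives $\|\mathcal{Q}f\|_{L^p(X;\mathbb{H})}\le C\|f\|_{L^p(X;H)}$ for all $p_0<p\le2$.

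For $2<p<p_0'$ I would pass to the adjoint. The Calder\'on reproducing formula $\int_0^\infty Q_t^2\,dt/t=cI$ on $\overline{R(L)}$ identifies $\mathcal{Q}^*\colon(g_t)_t\mapsto\int_0^\infty Q_tg_t\,dt/t$, whence $\|\mathcal{Q}\|_{L^p(X;H)\to L^p(X;\mathbb{H})}=\|\mathcal{Q}^*\|_{L^{p'}(X;\mathbb{H})\to L^{p'}(X;H)}$ with $p'\in(p_0,2)$. Since $L$ is self-adjoint the operators $Q_t$ are self-adjoint, so $\mathcal{Q}^*$ is again assembled from $\{Q_t\}$ and obeys the same off-diagonal estimates as $\mathcal{Q}$; the Blunck--Kunstmann argument of the previous paragraph therefore applies to $\mathcal{Q}^*$ in the range $(p_0,2)$ and, by duality, delivers the bound for $\mathcal{Q}$ when $2<p<p_0'$. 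Combining the three ranges with the comparison $\|S_Hf\|_{L^p}\le C\|g_Hf\|_{L^p}$ completes the proof of~\eqref{boundedness of vector area function}.

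The main obstacle is the verification of the $L^{p_0}\to L^2$ off-diagonal estimates for the composite families $\mathcal{Q}(I-A_r)$ and $A_r$ with constants that are summable in the annulus index~$j$ and uniform in $r$: one has only the weak generalized Gaussian estimate rather than a pointwise heat-kernel bound, so each such estimate must be extracted from~\eqref{eqn:2p0estimate} by functional-calculus manipulations together with a careful splitting of the time integral. By contrast, the $p>2$ range is essentially free once one notices that self-adjointness makes $\mathcal{Q}^*$ structurally identical to $\mathcal{Q}$.
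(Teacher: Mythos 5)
Your reduction of the conical square function $S_H$ to the vertical one $g_H$ is where the argument breaks. The weighted inequality $\int_X(S_Hf)^2w\,d\mu\le C\int_X(g_Hf)^2\,\mathcal{M}w\,d\mu$ is correct (it is exactly the Fubini/doubling computation the paper borrows from Uhl), but converting it into $\|S_Hf\|_{L^p}\le C\|g_Hf\|_{L^p}$ requires writing $\|S_Hf\|_{L^p}^2=\|(S_Hf)^2\|_{L^{p/2}}$ and pairing against $\phi\in L^{(p/2)'}$ with $\mathcal{M}$ bounded on $L^{(p/2)'}$; this needs $p/2>1$, i.e.\ $p>2$. For $p_0<p<2$ there is no ``standard Fefferman--Stein comparison'' in this direction: for a general family, domination of the conical square function by the vertical one in $L^p$ with $p<2$ is \emph{false} (spreading $L^2$ mass over the ball $B(y,t)$ increases $L^p$ norms when $p<2$; the easy direction below $2$ is $\|g_Hf\|_{L^p}\lesssim\|S_Hf\|_{L^p}$, the opposite of what you need). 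The comparison can be rescued when the family $t^2Le^{-t^2L}$ has off-diagonal decay --- this is the content of the Auscher--Hofmann--Martell ``vertical versus conical'' theorem --- but that is a genuine theorem requiring the off-diagonal estimates and a substantial proof, not a routine maximal-function argument. As written, your Blunck--Kunstmann weak-type $(p_0,p_0)$ bound for $\mathcal{Q}$ controls only $g_H$, not $S_H$, on the range $p_0<p<2$, so the theorem is not proved there.

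The paper avoids this issue by never comparing $S_H$ with the vertical square function below $p=2$: it dominates $S_H$ \emph{pointwise} by the $g^*_{\lambda,H}$-function (since the weight $\big(t/(d(x,y)+t)\big)^{n\lambda}$ is bounded below on the cone $d(x,y)<t$), proves that $g^*_{\lambda,H}$ itself is weak-type $(p_0,p_0)$ and bounded on $L^2(X;H)$ by running Uhl's Calder\'on--Zygmund argument --- which handles the conical structure directly --- in the $H$-valued setting, and then interpolates. Your treatment of the range $2<p<p_0'$, namely bounding the vertical square function and transferring to $S_H$ via the weighted maximal inequality and duality, is essentially the paper's Step~II and is fine in outline. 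To repair the proposal for $p_0<p<2$ you should either run the weak-type argument on a conical object (on $g^*_{\lambda,H}$, as Uhl and the paper do), or explicitly prove an AHM-type conical-versus-vertical comparison from the $L^{p_0}\to L^2$ off-diagonal estimates~\eqref{eqn:2p0estimate}; citing Fefferman--Stein does not close this step.
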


\begin{proof}
This boundedness result (\ref{boundedness of vector area
function}) is a vector-valued version of the result (4.15) in
Uhl's PhD thesis \cite[Section 4.4]{U}. We restate Uhl's proof
in our vector-valued setting.

\textbf{Step I.} We first prove that $\|S_Hf\|_{L^p(X)}\leq
C\|f\|_{L^p(X;H)}$ for $p_0<p\leq 2$.

To see this, we define
$$
g^*_{\lambda,H} f(x):=
\left(\int_0^\infty \int_X \Big(\frac{t}{d(x,y)+t}\Big)^{n\lambda}\big|\big(
        t^2Le^{-t^2L}\big)f(y)\big|_H^2\,\frac{d\mu(y)dt}{tV(x,t)}\right)^{1/2},
$$
where $n$ is the upper dimension of the doubling measure $\mu$.
Then it is easy to see that $\|S_Hf\|_{L^p(X)}\leq
C\|g^*_{\lambda,H}f\|_{L^p(X)}$ for each $\lambda>1$. Thus, it
suffices to prove that for each for $p$ with $p_0 < p \leq 2$,
there exists a positive constant $C$ such that
$\|g^*_{\lambda,H}f\|_{L^p(X)}\leq C\|f\|_{L^p(X;H)}$ for all
$f\in L^p(X;H)$. We do so by interpolation.

We first show the $L^2$ boundedness of $g^*_{\lambda,H}f$. To
see this, we point out that by Fubini's Theorem,
$$
\int_F |g^*_{\lambda,H}f|^2 d\mu(x)=
\int_0^\infty\int_X J_{\lambda,F}(y,t)\big|\big(
        t^2Le^{-t^2L}\big)f(y)\big|_H^2\,\frac{d\mu(y)dt}{t},
$$
with
$$
J_{\lambda,F}(y,t)=\int_F \Big(\frac{t}{d(x,y)+t}\Big)^{D\lambda} \frac{d\mu(x)}{V(x,t)},
$$
which holds for any closed set $F\subset X$.

Then we have the estimate
$$J_{\lambda,F}(y,s)\leq C_\lambda,$$
where $C_\lambda$ is a constant depending only on $\lambda$ and
$n$ but not on $F$, $y$ or $s$. This estimate follows directly from the
inequality~(4.16) in Uhl's PhD thesis~\cite[Section 4.4]{U}.

As a consequence, we obtain that
$$
    \|g^*_{\lambda,H}f\|_2^2
    \leq C_\lambda\! \int_0^\infty\!\!\!\int_X \big|\big(
        t^2Le^{-t^2L}\big)f(y)\big|_H^2\frac{d\mu(y)dt}{t}
    \leq C_\lambda\! \int_0^\infty t^4e^{-2t^2}\frac{dt}{t}\|f\|_{L^2(X;H)}^2
    \leq C\|f\|_{L^2(X;H)}^2.
$$

Next we point out that $g^*_{\lambda, H}$ is weak-type
$(p_0,p_0)$. All the calculations and ingredients of Uhl's
proof in~\cite[pp.63--74]{U} of this fact for $L^{p_0}(X)$,
namely the use of the Calder\'on--Zygmund decomposition, the
$L^2$-integral, duality in the sense of $L^2$, the
Hardy--Littlewood maximal operator, and the $L^{p_0}\to L^2$
estimate, go through in our vector-valued
setting~$L^{p_0}(X;H)$. Thus we need only apply the rest of
Uhl's proof, replacing the absolute value $|\cdot|$ used there
by our norm~$|\cdot|_H$.

\textbf{Step II.} We now prove that $\|S_Hf\|_{L^p(X)}\leq
C\|f\|_{L^p(X;H)}$ for $2\leq p< p_0'$.

To see this, we consider the Littlewood--Paley $g$-function
defined by
$$
    G_H f(x)
    := \left(\int_0^\infty|t^2Le^{-t^2L}f(x)|_H^2\frac{dt}{t}\right)^{1/2}.
$$
We claim that
\begin{equation}\label{eqn:GH}
    \|G_Hf\|_{L^p(X)}
    \leq C\|f\|_{L^p(H)}.
\end{equation}
The proof of~\eqref{eqn:GH} is exactly the
same as that of the proof for the Euclidean, non-vector-valued
case in Auscher's paper~\cite[Section~7.1]{A}. The key
ingredient of Auscher's proof is Theorem~2.2 of~\cite{A}. It is
noted in~\cite[Remark~7, after Theorem~2.2]{A} that Theorem~2.2
also holds in the vector-valued case. Further, the proof of
Theorem~2.2 in Auscher's paper goes through in the case of
spaces of homogeneous type.

Auscher's proof of~\eqref{eqn:GH} requires the Davies--Gaffney
estimates and \eqref{eqn:2p0estimate}.
%
%
The Davies--Gaffney estimates are one of
our hypotheses. The estimate~\eqref{eqn:2p0estimate} follows from
the generalized Gaussian estimates~$({\rm GGE}_{p_0})$, as is
shown in Lemma~2.6 of~\cite{U}. Thus inequality~\eqref{eqn:GH}
holds.

Then, following the duality argument in Uhl's
proof~\cite[pp.74--75]{U}, we obtain that for all $\phi\in
L^{(p/2)'}(X)$,
$$
    |\langle (S_Hf)^2,\phi  \rangle|
    \leq |\langle (G_Hf)^2, \mathcal{M}(|\phi|) \rangle|.
$$
Therefore $\|S_Hf\|_{L^p(X)} \leq \|G_Hf\|_{L^p(X)} \leq
C\|f\|_{L^p(H)}$, as required.
\end{proof}

\begin{remark}
    We point out that in Step~II in the proof above, we can obtain
    the following result as well:
    $\|S_{H,\psi}f\|_{L^p(X)}\leq\|G_{H,\psi}f\|_{L^p(X)}\leq
    C\|f\|_{L^p(X;H)}$, where $\psi$ appears in the reproducing
    formula in \eqref{e2 in section 5.3.3}, and
    \begin{eqnarray*}
        \hskip.7cm S_{H,\psi}f(x)
        := \bigg(\int_{\Gamma(x) }\big|\big(
            \psi(t\sqrt{L})\big)f(y)\big|_H^2\
            {dy \ \! dt\over tV(x,t)}\bigg)^{1/2},
            \end{eqnarray*}
    and
    $$
    G_{H,\psi}f(x)=\left(\int_0^\infty|\psi(t\sqrt{L})f(x)|_H^2\frac{dt}{t}\right)^{1/2}.
    $$
\end{remark}

Now we can prove Theorem~\ref{theorem-Hp-Lp}.

\begin{proof}[Proof of Theorem \ref{theorem-Hp-Lp}]
Note that Part (ii) is a consequence of Part (i) and
Theorem~\ref{theorem interpolation Hp}. It suffices to prove Part (i).

By Theorem \ref{theorem
injective} we obtain that $L_1$ and $L_2$ are injective
operators on $L^2(X_1)$ and $L^2(X_2)$, respectively. As a
consequence, the null space $N(L_1\otimes L_2)=\{0\}$, which
yields that $H^2(X_1\times X_2)=L^2(X_1\times X_2)$ since
$L^2(X_1\times X_2)=H^2(X_1\times X_2) \oplus N(L_1\otimes
L_2)$.
%
Thus,
to prove $H^p_{L_1,L_2}(X_1\times X_2)=L^p(X_1\times X_2)$ for $p_0<p\leq 2$, it suffices to prove
that for all $f\in L^2(X_1\times X_2)\cap L^p(X_1\times X_2)$,
\begin{align}\label{part i}
\|f\|_{L^p(X_1\times X_2)}\leq C\|Sf\|_{L^p(X_1\times X_2)} \leq
C\|f\|_{L^p(X_1\times X_2)}.
\end{align}
And then the result $H^p_{L_1,L_2}(X_1\times X_2)=L^p(X_1\times X_2)$ for $2<p< p'_0$
follows from the duality argument. This implies that Part (i) holds.

We now verify \eqref{part i}. First, write the area function as
\begin{eqnarray*}
&&\Big(\int_{\Gamma(x)}
|t_1^2L_1e^{-t_1^2L_1}\otimes t_2^2L_2e^{-t_2^2L_2} f(y)|^2
{d\mu_1(y_1)\over V(x_1,t_1)}{dt_1\over t_1}{d\mu_2(y_2)\over
V(x_2,t_2)}{dt_2\over
t_2}\Big)^{1/2}\\
&&=\left(\int_{\Gamma(x_1)}\left[\int_{\Gamma(x_2)}
|\big(t_1^2L_1e^{-t_1^2L_1}F_{t_2,y_2}\big)(y_1)|^2
{d\mu_2(y_2)\over V(x_2,t_2)}{dt_2\over t_2}\right]{d\mu_1(y_1)\over
V(x_1,t_1)}{dt_1\over t_1}\right)^{1/2}
\end{eqnarray*}
where $F_{t_2,y_2}(\cdot)=\big(t_2^2L_2e^{-t_2^2L_2}
f\big)(\cdot,y_2)$.

For each $x_2\in X_2$, we define the Hilbert-valued function
space $L^2(X_1;H_{x_2})$ via the following $H_{x_2}$ norm:
$$
|G_{t_2,y_2}(y_1)|_{H_{x_2}}:=\left[\int_{\Gamma(x_2)}
|G_{t_2,y_2}(y_1)|^2 {d\mu_2(y_2)\over V(x_2,t_2)}{dt_2\over
t_2}\right]^{1/2}.
$$
Then $L_1$ can be extended to act on $L^2(X_1;H_{x_2})$ in a
natural way. Also the generalized Gaussian estimates can
be extended to the semigroup $e^{tL}$ acting on
$L^2(X_1;H_{x_2})$. That is, by Minkowski's inequality
\begin{eqnarray*}
\lefteqn{\|P_{B(x_1,t^{1/2})}e^{-tL_1}P_{B(y_1,t^{1/2})}G_{t_2,y_2}(\cdot)\|_{L^{p_0'}(X;H)}}\\
&&=\Big\||P_{B(x_1,t^{1/2})}e^{-tL_1}P_{B(y_1,t^{1/2})}G_{t_2,y_2}(\cdot)|_{H}\Big\|_{L^{p_0'}(X_1)}\\
&&\leq
\Big|\|P_{B(x_1,t^{1/2})}e^{-tL_1}P_{B(y_1,t^{1/2})}G_{t_2,y_2}(\cdot)\|_{L^{p_0'}(X_1)}\Big|_{H}\\
&&\leq C
V(x_1,t^{1/2})^{-(1/p_0-1/p_0')}\exp\Big(-b\frac{d(x_1,y_1)^2}{t}\Big)\big|\|G_{t_2,y_2}\|_{L^p(X_1)}\big|_{H}\\
&&\leq C
V(x_1,t^{1/2})^{-(1/p_0-1/p_0')}\exp\Big(-b\frac{d(x_1,y_1)^2}{t}\Big)\big\||G_{t_2,y_2}|_H\big\|_{L^p(X_1)}\\
&&=C
V(x_1,t^{1/2})^{-(1/p_0-1/p_0')}\exp\Big(-b\frac{d(x_1,y_1)^2}{t}\Big)\|G_{t_2,y_2}\|_{L^p(X;H)}.
\end{eqnarray*}
Define the area function $S_{H_{x_2}}$ from $L^2(H_{x_2})$ to
$L^2(X_1)$ by
\begin{eqnarray*}
    \hskip.7cm S_{H_{x_2}}G_{t_2,y_2}(x_1)
    := \bigg(\int_{\Gamma(x_1) }\big|\big(
        t^2L_1e^{-t^2L_1}\big)G_{t_2,y_2}(y_1)\big|_H^2\
        {d\mu_1(y_1) \ \! dt\over tV(x_1,t)}\bigg)^{1/2}.
   \end{eqnarray*}
Recall that $F_{t_2,y_2}(\cdot)=\big(t_2^2L_2e^{-t_2^2L_2}
f\big)(\cdot,y_2)$. So by Theorem \ref{theorem vector area
function}, we have for all $p\in (p_0,p_0')$ that
\begin{eqnarray*}
\|Sf\|_{L^p(X_1\times
X_2)}&=&\|S_{H_{x_2}}F_{t_2,y_2}(x_1)\|_{L^p(X_1\times X_2)}\\
&\leq&\|\|F_{t_2,y_2}\|_{L^p(H_{x_2})}\|_{L^p(X_2)}\\
&=&\|\||F_{t_2,y_2}|_{H_{x_2}}\|_{L^p(X_2)}\|_{L^p(X_1)}\\
&=&\Bigg\|\bigg\|\left[\int_{\Gamma(x_2)} |F_{t_2,y_2}(y_1)|^2
{d\mu_2(y_2)\over V(x_2,t_2)}{dt_2\over
t_2}\right]^{1/2}\bigg\|_{L^p(X_2)}\Bigg\|_{L^p(X_1)}\\
&=&\Bigg\|\bigg\|\left[\int_{\Gamma(x_2)}
|\big((t_2^2L_2)e^{-t_2^2L_2} f\big)(y_1,y_2)|^2 {d\mu_2(y_2)\over
V(x_2,t_2)}{dt_2\over
t_2}\right]^{1/2}\bigg\|_{L^p(X_2)}\Bigg\|_{L^p(X_1)}\\
&\leq& C\|f\|_{L^p(X_1\times X_2)}.
\end{eqnarray*}

We can obtain the other direction, that is,
$\|f\|_{L^p(X_1\times X_2)} \leq C\|Sf\|_{L^p(X_1\times X_2)}$,
by using the reproducing formula and then the standard duality
argument and the $L^p$-boundedness of the area function for $2
\leq p < p_0'$. This completes the proof of
Theorem~\ref{theorem interpolation Hp}.
\end{proof}

\bigskip

{\bf Acknowledgments.} P.~Chen, X.T.~Duong, J.~Li and L.A.~Ward
are supported by the Australian Research Council (ARC) under
Grant No.~ARC-DP120100399.  L.X.~Yan is supported by the NNSF
of China, Grant Nos.~10925106 and~11371378. Part of this work
was done during L.X.~Yan's stay at Macquarie University and
visit to the University of South Australia. L.X.~Yan would like
to thank Macquarie University and the University of South
Australia for their hospitality.

\end{document}